\numberwithin{equation}{section}
\newtheorem{theorem}[equation]{Theorem}
\newtheorem{lemma}[equation]{Lemma}
\newtheorem{proposition}[equation]{Proposition}
\newtheorem{corollary}[equation]{Corollary}
\newtheorem*{thm2}{Theorem A}
\theoremstyle{definition}
\newtheorem{definition}[equation]{Definition}
\newtheorem{example}[equation]{Example}
\newtheorem{examples}[equation]{Examples}
\theoremstyle{remark}
\newtheorem{remark}[equation]{Remark}
\newtheorem{scholium}[equation]{Scholium}
\renewcommand{\phi}{\varphi}
\newcommand{\bndry}{\partial}
\DeclareMathSymbol{\boxprod}{\mathbin}{AMSa}{"03} % binary operator version of \square
\DeclareMathSymbol{\mixprod}{\mathbin}{AMSa}{"4F} % binary operator version of \triangledown
\newcommand{\dirsum}{\oplus}
\newcommand{\Dirsum}{\bigoplus}
\newcommand{\disjunion}{\sqcup}
\newcommand{\Disjunion}{\coprod}
\newcommand{\dual}{^\vee}
\newcommand{\hmtpc}{\simeq}
\newcommand{\homeo}{\approx}
\newcommand{\includesin}{\hookrightarrow}
\newcommand{\iso}{\cong}
\newcommand{\Mackey}[1]{{\underline {#1}}}
\newcommand{\onto}{\twoheadrightarrow}
\newcommand{\smsh}{\wedge}
\newcommand{\susp}{\Sigma}
\newcommand{\tensor}{\otimes}
\newcommand{\union}{\cup}
\newcommand{\Wedge}{\bigvee}
\newcommand{\C}{{\mathbb C}}
\newcommand{\F}{{\mathcal F}}
\newcommand{\NN}{{\mathbb N}}
\newcommand{\Oscr}{{\mathscr O}}
\newcommand{\PP}{\mathbb{P}}
\newcommand{\R}{{\mathbb R}}
\newcommand{\Z}{\mathbb{Z}}
\newcommand{\ZZ}{\mathbb{Z}}
\newcommand{\HS}{\Mackey H^{RO(G)}_G(S^0)}
\newcommand{\tE}{\tilde E}
\newcommand{\cw}{c_{\omega}}
\newcommand{\cxw}{c_{\chiw}}
\newcommand{\cwt}{\zeta_1}
\newcommand{\cxwt}{\zeta_0}
\newcommand{\cwd}{\widehat{c}_\omega}
\newcommand{\cxwd}{\widehat{c}_{\chiw}}
\newcommand{\Cpq}[2]{\C^{#1+#2\sigma}}
\newcommand{\Cq}[1]{\C^{#1\sigma}}
\newcommand{\Xpq}[2]{\PP(\Cpq{#1}{#2})}
\newcommand{\Xp}[1]{\PP(\C^{#1})}
\newcommand{\Xq}[1]{\PP(\Cq{#1})}
\newcommand{\chiw}{\chi\omega}
\newcommand{\I}{{\mathrm{I}}}
\newcommand{\II}{{\mathrm{II}}}
\newcommand{\III}{{\mathrm{III}}}
\newcommand{\IV}{{\mathrm{IV}}}
\newcommand{\orb}[1]{{\mathscr{O}_{#1}}}
\newcommand{\sorb}[1]{{\widehat{\mathscr{O}}_{#1}}}
\newcommand{\conc}[1]{\langle #1 \rangle}
\DeclareMathOperator*{\colim}{colim}
\DeclareMathOperator{\Hom}{Hom}
\DeclareMathOperator{\Pic}{Pic}
\DeclareMathOperator{\Inf}{Inf}
\newcommand{\tensorS}{\tensor_{\Mackey H_{C_2}^{RO(C_2)}(S^0;\Mackey A)}}
\begin{document}
\title{The $C_2$-equivariant cohomology of complex projective spaces}

\author{Steven R. Costenoble}
\address{Steven R. Costenoble, Department of Mathematics\\Hofstra University\\
   Hempstead, NY 11549}
\email{Steven.R.Costenoble@Hofstra.edu}
\author{Thomas Hudson}
\address{Thomas Hudson, Fachgruppe Mathematik und Informatik, Bergische Universit\"at Wuppertal, 42119 Wuppertal, Germany}
\email{hudson@math.uni-wuppertal.de}
\author{Sean Tilson}
\address{Sean Tilson, Fachgruppe Mathematik und Informatik, Bergische Universit\"at Wuppertal, 42119 Wuppertal, Germany}
\email{tilson@math.uni-wuppertal.de}
\begin{abstract}
We compute the equivariant cohomology of complex projective spaces associated to finite-dimensional representations of $C_2$,
using ordinary cohomology graded on representations of the fundamental groupoid, with coefficients in the Burnside ring Mackey functor.
This extension of the $RO(C_2)$-graded theory allows for the definition of Euler classes, which are used as generators
of the cohomology of the projective spaces.
As an application, we give an equivariant version of Bezout's theorem.
\end{abstract}
\keywords{equivariant cohomology, equivariant characteristic class, projective space, Bezout's theorem}
\makeatletter
\@namedef{subjclassname@2020}{%
  \textup{2020} Mathematics Subject Classification}
\makeatother
\subjclass[2020]{Primary: 55N91;
Secondary: 14N10, 14N15, 55R40, 55R91}
\maketitle

\tableofcontents

%%%%%%%%%%%%%%%%%%%%%%%%%%%%%%%%%%%%%%%%%%%%%%%%%%%%%%%%%%%%%%%%%%%%%%%%%%%%%%%%%%%%%%%%%%%%%%%%%%%%%%%%%%%%%%%%%%%%%%%%%%%%%
%%%%%%%%%%%%%%%%%%%%%%%%%%%%%%%%%%%%%%%%%%%%%%%%%%%%%%%%%%%%%%%%%%%%%%%%%%%%%%%%%%%%%%%%%%%%%%%%%%%%%%%%%%%%%%%%%%%%%%%%%%%%%
%%%%%%%%%%%%%%%%%%%%%%%%%%%%%%%%%%%%%%%%%%%%%%%%%%%%%%%%%%%%%%%%%%%%%%%%%%%%%%%%%%%%%%%%%%%%%%%%%%%%%%%%%%%%%%%%%%%%%%%%%%%%%
%%%%%%%%%%%%%%%%%%%%%%%%%%%%%%%%%%%%%%%%%%%%%%%%%%%%%%%%%%%%%%%%%%%%%%%%%%%%%%%%%%%%%%%%%%%%%%%%%%%%%%%%%%%%%%%%%%%%%%%%%%%%%
%%%%%%%%%%%%%%%%%%%%%%%%%%%%%%%%%%%%%%%%%%%%%%%%%%%%%%%%%%%%%%%%%%%%%%%%%%%%%%%%%%%%%%%%%%%%%%%%%%%%%%%%%%%%%%%%%%%%%%%%%%%%%
%%%%%%%%%%%%%%%%%%%%%%%%%%%%%%%%%%%%%%%%%%%%%%%%%%%%%%%%%%%%%%%%%%%%%%%%%%%%%%%%%%%%%%%%%%%%%%%%%%%%%%%%%%%%%%%%%%%%%%%%%%%%%
%%%%%%%%%%%%%%%%%%%%%%%%%%%%%%%%%%%%%%%%%%%%%%%%%%%%%%%%%%%%%%%%%%%%%%%%%%%%%%%%%%%%%%%%%%%%%%%%%%%%%%%%%%%%%%%%%%%%%%%%%%%%%
%%%%%%%%%%%%%%%%%%%%%%%%%%%%%%%%%%%%%%%%%%%%%%%%%%%%%%%%%%%%%%%%%%%%%%%%%%%%%%%%%%%%%%%%%%%%%%%%%%%%%%%%%%%%%%%%%%%%%%%%%%%%%
%%%%%%%%%%%%%%%%%%%%%%%%%%%%%%%%%%%%%%%%%%%%%%%%%%%%%%%%%%%%%%%%%%%%%%%%%%%%%%%%%%%%%%%%%%%%%%%%%%%%%%%%%%%%%%%%%%%%%%%%%%%%%

%\section{New introduction working draft}

\section{Introduction}

In recent years there has been significant progress in equivariant homotopy theory.
Both its use in the solution of the Kervaire invariant one conjecture, see \cite{HHRKervaire}, and its relationship to motivic homotopy theory, see for example \cite{DuggerIsaksenEquivandReal}, have led to a renewed interest in the subject, particularly in computations.
In relation to our work, the works of Dugger,  Hogle, and Hazel are of particular interest.

In \cite{DuggerGrass}, Dugger was able to establish an equivariant version of the classical calculation of the cohomology of infinite real Grassmannians.
He provided a Borel style presentation of the cohomology of $Gr_k(\mathscr{U})$, where $\mathscr{U}$ is a complete $C_2$ universe. His methods are computational and involve investigating the cellular spectral sequence;
they are specific to the infinite case and they do not have an obvious geometric interpretation. The work of Hogle in \cite{Hogle} extends these computations to include some finite Grassmannians.
He was able to compute the equivariant cohomology of $Gr_k(\R^n\oplus \R^{\sigma})$ and $Gr_2(\R^n\oplus \R^{2\sigma})$, where $\R$ denotes the trivial representation and $\R^{\sigma}$ denotes the sign representation. 
In \cite{HazelFundamental}, Hazel introduces a definition of fundamental classes in $RO(C_2)$-graded cohomology and
shows that these classes satisfy familiar relations and generate the cohomology of surfaces, but do not have all the properties
one would wish for.
The relationship between Hazel's notion of fundamental class and the one we use here, introduced in \cite{CostenobleWanerBook},
still needs to be explored.
Note that all of the results of Dugger, Hogle, and Hazel are with coefficients in the constant Mackey functor $\Mackey{\mathbb{F}_{2}}$.

Other more classical $RO(G)$-graded computations were done by Lewis. In \cite{LewisCP}, Lewis computed the equivariant cohomology of complex projective spaces associated to complex representations of $C_p$. However, his computations do not provide a simple description of the product structure for all representations. Our present computation remedies this for $C_2$ by using an extension of the classical equivariant theory, this extension being the natural home for Euler classes.

In fact, we are using the $RO(\Pi B)$-graded cohomology, which was developed by the first author and Waner in \cite{CostenobleWanerBook}. This theory mixes both equivariant and parametrized homotopy theory and was previously used by the first author to compute the cohomology of complex projective spaces of complete universes for groups of prime order, see \cite{Co:BGU1preprint} for $p=2$ and \cite{Co:BGU1odd} for odd primes. Unlike in the classical situation, we use the infinite case in order to obtain information about the finite case: both the generators and the last two relations come from the infinite case via restriction.

More explicitly, we obtain the following description.

\begin{thm2}
Let $0 \leq p < \infty$ and $0 \leq q < \infty$ with $p+q > 0$.
As an algebra over $\Mackey H_{C_2}^{RO(G)}(S^0)$, we have that $\Mackey H_{C_2}^{RO(\Pi B)}(\PP(\C^p\oplus \C^{q\sigma})_+)$ is generated by the elements
$\cw$, $\cxw$, $\cwt$, and $\cxwt$, with $\cw^p$ infinitely divisible by $\cxwt$ and $\cxw^q$ infinitely divisible by $\cwt$.
The generators satisfy the following relations:
\begin{align*}
	\cw^p \cxw^q &= 0, \\
	\cwt \cxw - (1-\kappa)\cxwt \cw &= e^2 \qquad\text{and} \\
	\cxwt \cwt &= \xi.
\end{align*}
\end{thm2}

Here, $\C$ denotes the trivial complex $C_2$-representation and $\C^{\sigma}$ is $\C$ with the sign action, while the $C_2$-space $B$ is the infinite projective space $\PP(\C^{\infty}\oplus \C^{\infty\sigma})$. We write $\omega$ for the tautological line bundle over $\PP(\C^p\oplus \C^{q\sigma})$ and $\chi\omega$ for $\omega\tensor \C^{\sigma}$.
The elements $\cw$ and $\cxw$ denote their Euler classes, while $\cwt$ and $\cxwt$ denote related classes, whereas $\kappa$, $e^2$, and $\xi$ are elements of $\HS$. All cohomology is taken with coefficients in the Burnside ring Mackey functor $\Mackey A$. For more details about this cohomology, see  Section~\ref{subsec:cohompoint}.

In future works, we would like to compute the fully graded cohomology of other finite Grassmannians, that is $\Mackey H_G^{RO(\Pi X)}(Gr_m(\C^p\oplus \C^{q\sigma})_+;\Mackey A)$. As one of our aims is to generalize classical results of Schubert calculus and enumerative geometry to the equivariant setting, the second part of this paper is devoted to the presentation of an example of such a generalization, discussed below. We hope that this application will help convince the reader of the added benefits that can be reaped by considering the extended grading.  

\subsection{Equivariant Bezout's theorem}
In its classical 0-dimensional formulation, Bezout's theorem provides a formula which computes the number of points of intersection of $n$ algebraic hypersurfaces $H_1,\dots,H_n$ in $\C\PP^n$. If we denote by $O(1)$ the dual of the tautological bundle $\omega$, then each of these hypersurfaces can be viewed as the vanishing locus of an algebraic section of a line bundle $O(d_i)\rightarrow \C\PP^n$, where $O(m)$ is the $m$-th tensor power of $O(1)$.
One usually refers to the integer $d_i$ as the \textit{degree} of $H_i$. Bezout's theorem states that, if the intersection of the hypersurfaces is 0-dimensional, then its cardinality is given by $d$, the product of the degrees, provided that the points are counted with the appropriate notion of multiplicity.

This result follows easily once one knows the structure of the cohomology ring of $\C\PP^n$. The fundamental cohomology class $[H_i]^*$ associated to each hypersurface is given by the Euler class $e(O(d_i))$, and, in view of the hypothesis on the intersection, one can then write 
$$\Big[\bigcap_{i=1}^n H_i\Big]^*=\prod_{i=1}^n[H_i]^*=\prod_{i=1}^n e(O(d_n))
=\prod_{i=1}^n (d_i\cdot c)=d \cdot c^n,$$
where $c=e(O(1))$ is, up to a sign, the usual multiplicative generator of the cohomology of $\C\PP^n$; $c^n$ should be interpreted as the fundamental cohomology class of a point. 

In the equivariant case, one replaces the hypersurfaces with equivariant hypersurfaces and, as a consequence, the intersection will now inherit a $C_2$-action, giving a $C_2$-set, hence an element of the Burnside ring $A(C_2)$. 
If we consider $\Xpq{p}{q}$ with $n=p+q-1$, a computation similar in spirit to the one that we just outlined allows us to predict how many of the $d$ points are fixed by the action and how many pairs of points are swapped. In the case in which the equivariant hypersurfaces intersect transversely we obtain 
$$\Big[\bigcap_{i=1}^n H_i\Big]^*=\frac{d-A-B}{2}[C_2]^*+A[\Xp{}]^*+B[\Xq{}]^*,$$
where $A$ and $B$ are non-negative integers. Here $[C_2]^*$ is the fundamental cohomology class of a free orbit, whereas $[\Xp{}]^*$ and $[\Xq{}]^*$ are the fundamental classes of fixed points: the first point lies in the fixed component $\Xp{p}$, while the second belongs to $\Xq{q}$. The values of $A$ and $B$ depend on the exact composition of the family of hypersurfaces. In fact, it turns out that it becomes necessary to subdivide the line bundles on $\Xpq{p}{q}$ according to parity of the degree and the lack or presence of the tensor factor $\C^\sigma$. For the precise statement we refer the reader to Corollary \ref{cor:genericallyTransverse} and Theorem \ref{Theorem BezoutMult}, the latter of which addresses the more general case in which we do not assume transversality and it becomes necessary to count points with (equivariant) multiplicities.
%These Euler classes that generate the cohomology live outside of the $RO(G)$-graded part of the cohomology.

\subsection{Motivation for extended grading}
%reorder this section so that we first talk about the euler class and thom iso, and then mention that a consequence is a comparison with Gaunces stuff and the thing about c_1 restricting appropriately.
The ordinary cohomology theory we use is an extension of the more familiar $RO(G)$-graded Bredon cohomology with coefficients in a Mackey functor.
The grading is extended to include all representations of the equivariant fundamental groupoid of the space in question, not simply the constant representations (the elements of $RO(G)$).
This extension results in a cohomology theory with several useful features: Thom isomorphisms, Poincar\'e duality, Euler classes, and a good theory of fundamental classes. %add citations
Let us address the Euler classes and fundamental classes in particular. %are they actually equivalent? if not maybe be more vague or more explicit.

By a good theory of fundamental classes we mean, among other things, that the fundamental homology class of an equivariant manifold restricts to the fundamental class of the fixed submanifold under the fixed point functors with which equivariant homology is naturally packaged.
Similarly, a good notion of Euler classes should have the property that the Euler class of an equivariant vector bundle $V$ restricts to $e(V^{C_2})\in H^*(X^{C_2})$. % the Euler class of the fixed bundle over the fixed points of the base space.
As the fixed points of the base space may be disconnected,  the dimension of the fixed bundle can be nonconstant.
%It can happen that the dimension of the fixed bundle of an equivariant bundle is different over the various path components of the fixed points of the base space.
This indeed happens for $\Xpq{\infty}{\infty}$, the classifying space for $C_2$-equivariant complex line bundles. This failure of the dimension function of the fixed bundle to be constant means that the relevant Euler class may be nonhomogeneous. %, even though it can be viewed as homogeneous in view of the extended grading.
As homogeneous elements of the $RO(C_2)$-graded cohomology restrict to homogeneous elements under the fixed point functors, $RO(C_2)$-graded cohomology can not have a theory of Euler classes which restricts appropriately.
While nonhomogeneous elements are uncommon, we will see later how this allows us to interpret certain products in cohomology geometrically.
This behavior of the Euler class is fundamental and one of the key features of this extended cohomology.

%We will see that working with this theory not only provides for more appealing formulas, but also allows for geometric application of our computation.
Euler classes can also be used to explain why $RO(G)$-graded equivariant cohomology cannot provide the full picture. In \cite{MilnorStasheff}, Milnor and Stasheff gave a list of axioms that Chern classes satisfy and it is natural to look for an equivariant generalization of this list. %, with the Euler class playing the role of the top Chern class.
One of the key axioms is that $c_1(\omega)\in H^2(\C\PP^{\infty};\ZZ)$ restricts along the natural map to a generator in the cohomology of $\C\PP^1$.
Equivariantly, there are two different spaces that play the role of $\C\PP^1$, they are the two representation spheres $S^{2\sigma} = \Xpq{1}{}$ 
and $S^2 = \Xp{2} = \Xq{2}$.
Their $RO(C_2)$-graded cohomology has generators in different gradings, $2\sigma$ and $2$, respectively (via the suspension isomorphism).
Any theory of Chern classes based solely on $RO(C_2)$-graded Bredon cohomology would not be able to satisfy the reasonable condition
that the first Chern class restrict to generators of the cohomology of both $S^{2\sigma}$ and $S^2$.
%Plainly, as the pullback map in cohomology preserves the grading, no element can restrict to elements in both gradings $2$ and $2\sigma$ as they are not the same element of $RO(C_2)$. %perhaps this is a bit too much detail?
The extended grading developed in \cite{CostenobleWanerBook} precisely addresses this.
The Euler class of $\omega$ in \cite{CostenobleShortC2} (morally, the first Chern class since $\omega$ is a line bundle) %this clause is still weird
lives outside of the $RO(G)$-grading and restricts as described above.
Indeed, the grading of $e(\omega)$ is the representation of the fundamental groupoid induced by $\omega$, as described in Section~\ref{sub ROPi}. %we should edit this sentence so that it says something more appropriate or accurate as Steve prefers. Also we should add a citation.

%{\color{red}thomas points out that this is slightly problematic as we don't have a theory of chern classes.
%So the language should actually reflect this fact and be more careful, lets circle back to this.}

\subsection{Outline}

We begin in Section \ref{sec: background} with some background material.
This includes recalling facts about $RO(C_2)$-graded cohomology and introducing the extended $RO(\Pi B)$-graded theory and some of its more relevant properties.
Specifically, we mention various functorialities that this theory enjoys and which we will need.

In Section \ref{sec: Statement} we give the statement of our main Theorem as well as the proof of some initial cases. The general case is then handled in Section \ref{sec: Proof} by induction and the computation of certain push-forward maps. A key ingredient is the fact that, in order to verify that a given set of elements constitutes a basis for the equivariant cohomology, it is sufficient to check that it produces a basis for both the underlying cohomology and that of the fixed points, a line of argument suggested to us by a referee.
We first use this to establish the additive structure and then use push-forward maps to understand the aspects of the multiplicative structure that are not inherited from the infinite case $\Xpq{\infty}{\infty}$.
%These push-forward maps will be crucial for us as they have geometric meaning/content.
%(I am not sure what more I want to say about these, I feel like I am overdoing it and maybe it would be better to mention more about the push-forwards elsewhere.)
%maybe we should highlight the use of push-forwards more prominently in the proof section.
In Section \ref{sec: leftovers}, we consider the cases in which one of $p$ or $q$ is infinite, the comparison of our work to Lewis' computation in \cite{LewisCP},
%the $RO(C_2)$-graded subalgebra as computed by Lewis,
and the analogues of our main theorem with other coefficient systems.

In Section \ref{sec: Line bundles}, we study the Picard group of $\Xpq{p}{q}$ as well as the Euler classes of all line bundles and their sums. %both is ambiguous here as maybe there are only two distinct line bundles on P^n.
Finally, in Section \ref{sec: Application}, we make use of the computation in the previous section to obtain an equivariant refinement of Bezout's theorem for $0$-dimensional intersections.
We also obtain a completely numerical necessary condition for a sum of line bundles to have an equivariant section transverse to the zero section.

We have included two appendices to help with the reading of this paper.
The first is a glossary of terms and notations.
The second is a table presenting the behaviour of elements belonging either to the equivariant cohomology of a point or of $\Xpq{\infty}{\infty}$ under the different restriction maps.
We recommend having these tables in hand when beginning to look at our computations.

\subsection*{Acknowledgements}
The authors would like to express their gratitude to the anonymous referee whose comments significantly improved the paper in many ways, in particular by suggesting a way to simplify the proof of the main result and encouraging 
us to look into the possibility of extending Bezout's theorem as a possible application.   
The first author wishes to thank the other two authors and the Bergische Universit\"{a}t Wuppertal for their gracious hospitality during his visit in summer 2018.
The second and third author would like to thank Jens Hornbostel for encouraging the development of this joint project and fostering an inquisitive scientific atmosphere.
The second and third author were partially supported by the DFG through the SPP 1786: \textit{``Homotopy theory and Algebraic Geometry''}, Project number 405468058: $C_2$-equivariant Schubert calculus of homogeneous spaces.
The research was conducted in the framework of the research training group
\emph{GRK 2240: Algebro-Geometric Methods in Algebra, Arithmetic and Topology},
which is funded by the DFG.

%\section{\color{red} Introduction}

%%%%%%%%%%%%%%%%%%%%%%%%%%%%%%%%%%%%%%%%%%%%%%%%%%%%%%%%%%%%%%%%%%%%%%%%%%%%%%%%%%%%%%%%%%%%%%%%%%%%%%%%%%%%%%%%%%%%%%%%%%%%%
%%%%%%%%%%%%%%%%%%%%%%%%%%%%%%%%%%%%%%%%%%%%%%%%%%%%%%%%%%%%%%%%%%%%%%%%%%%%%%%%%%%%%%%%%%%%%%%%%%%%%%%%%%%%%%%%%%%%%%%%%%%%%
%%%%%%%%%%%%%%%%%%%%%%%%%%%%%%%%%%%%%%%%%%%%%%%%%%%%%%%%%%%%%%%%%%%%%%%%%%%%%%%%%%%%%%%%%%%%%%%%%%%%%%%%%%%%%%%%%%%%%%%%%%%%%
%%%%%%%%%%%%%%%%%%%%%%%%%%%%%%%%%%%%%%%%%%%%%%%%%%%%%%%%%%%%%%%%%%%%%%%%%%%%%%%%%%%%%%%%%%%%%%%%%%%%%%%%%%%%%%%%%%%%%%%%%%%%%
%%%%%%%%%%%%%%%%%%%%%%%%%%%%%%%%%%%%%%%%%%%%%%%%%%%%%%%%%%%%%%%%%%%%%%%%%%%%%%%%%%%%%%%%%%%%%%%%%%%%%%%%%%%%%%%%%%%%%%%%%%%%%
%%%%%%%%%%%%%%%%%%%%%%%%%%%%%%%%%%%%%%%%%%%%%%%%%%%%%%%%%%%%%%%%%%%%%%%%%%%%%%%%%%%%%%%%%%%%%%%%%%%%%%%%%%%%%%%%%%%%%%%%%%%%%
%%%%%%%%%%%%%%%%%%%%%%%%%%%%%%%%%%%%%%%%%%%%%%%%%%%%%%%%%%%%%%%%%%%%%%%%%%%%%%%%%%%%%%%%%%%%%%%%%%%%%%%%%%%%%%%%%%%%%%%%%%%%%
%%%%%%%%%%%%%%%%%%%%%%%%%%%%%%%%%%%%%%%%%%%%%%%%%%%%%%%%%%%%%%%%%%%%%%%%%%%%%%%%%%%%%%%%%%%%%%%%%%%%%%%%%%%%%%%%%%%%%%%%%%%%%
%%%%%%%%%%%%%%%%%%%%%%%%%%%%%%%%%%%%%%%%%%%%%%%%%%%%%%%%%%%%%%%%%%%%%%%%%%%%%%%%%%%%%%%%%%%%%%%%%%%%%%%%%%%%%%%%%%%%%%%%%%%%%
%%%%%%%%%%%%%%%%%%%%%%%%%%%%%%%%%%%%%%%%%%%%%%%%%%%%%%%%%%%%%%%%%%%%%%%%%%%%%%%%%%%%%%%%%%%%%%%%%%%%%%%%%%%%%%%%%%%%%%%%%%%%%
%%%%%%%%%%%%%%%%%%%%%%%%%%%%%%%%%%%%%%%%%%%%%%%%%%%%%%%%%%%%%%%%%%%%%%%%%%%%%%%%%%%%%%%%%%%%%%%%%%%%%%%%%%%%%%%%%%%%%%%%%%%%%

\section{Background}
\label{sec: background}

\subsection{$RO(C_2)$-graded cohomology}\label{subsec:cohompoint}

The construction of a $G$-equivariant ordinary cohomology theory graded on the representation ring $RO(G)$
was announced in \cite{LMM:roghomology} with the details first appearing in print
in \cite{Alaska}. It takes as coefficients a Mackey functor, which we think of as
a contravariant functor on the stable orbit category $\sorb G$. For $G = C_2$, $\sorb{C_2}$
takes the following form:
\[
 \xymatrix{
  C_2/C_2 \ar@(ur,ul)[]_{A(C_2)} \ar@/^/[d]^{\tau} \\
  C_2/e \ar@/^/[u]^{\rho} \ar@(dl,dr)[]_{\Z[C_2]} \\
 }
\]
Here, $\rho$ is the projection and we mean that the group of maps $C_2/e \to C_2/C_2$ is
the free abelian group on $\rho$.
Similarly, $\tau$ is the transfer map and the group of maps $C_2/C_2 \to C_2/e$
is free abelian on $\tau$.
$A(C_2)$ is the Burnside
ring of $C_2$, 
the Grothendieck group of finite $C_2$-sets under disjoint union, with the product (composition)
induced by the product of $C_2$-sets;
this is isomorphic to the ring of stable self maps of $C_2/C_2$. 
Explicitly, we have
\[
 A(C_2) \iso \Z[g]/\langle g^2 - 2g \rangle,
\]
where $g = [C_2]$ is the class of the orbit space $C_2/e$. It will be convenient to write
\[
 \kappa = 2 - [C_2].
\]
We have $\kappa^2 = 2\kappa$ and
\[
 A(C_2) \iso \Z[\kappa] / \langle \kappa^2 - 2\kappa \rangle.
\]
Finally, the ring of self-maps of $C_2/e$ is the group ring
\[
 \Z[C_2] \iso \Z[t]/\langle t^2 - 1 \rangle.
\]

The coefficient system we will use throughout most of this paper is
the Burnside ring Mackey functor $\Mackey A$, which can
be pictured as
\[
 \xymatrix{
		A(C_2) \ar@/_/[d]_{\epsilon} \\
		\Z \ar@/_/[u]_{\cdot [C_2]} \ar@(dl,dr)[]_{1}
 }
\]
Here, we mean that $\Mackey A(C_2/C_2) = A(C_2)$ and $\Mackey A(C_2/e) = \Z$.
We have $\rho^* = \epsilon$, where $\epsilon$ counts the number of points
in a finite $C_2$-set, so $\epsilon(1) = 1$, $\epsilon([C_2]) = 2$, and $\epsilon(\kappa) = 0$.
Similarly, $\tau^* = \cdot[C_2]$, which is the map that takes $n$ to $n[C_2] = 2n - n\kappa$.
Note also that $t^*$ acts as the identity on $\Mackey A(C_2/e) = \Z$.

$RO(G)$-graded ordinary cohomology is constructed in \cite{Alaska} using
chain complexes coming from $G$-CW structures on equivariant spectra,
but the result can be stated in its represented form as follows: 
There is a unique (up to uniquely determined equivalence)
Eilenberg-Mac\,Lane $C_2$-spectrum $H\Mackey A$ with the property that
\[
 \Mackey \pi^{C_2}_n (H\Mackey A) \iso
  \begin{cases}
  	\Mackey A & \text{if $n = 0$ and} \\
	0 & \text{if $n\neq 0$}
  \end{cases}
\]
for $n\in \Z$.
Here, $\Mackey \pi^{C_2}_n (E)$ denotes the Mackey functor defined by
\[
 \Mackey \pi_n^{C_2}(E)(C_2/J) = [(C_2/J)_+\smsh S^n, E]^{C_2}
  \iso [S^n, E]^J
\]
for $J = C_2$ or $e$.

We grade the cohomology theory represented by $H\Mackey A$
on the representation ring $RO(C_2)$, which is the free abelian
group on two generators, the trivial representation $\R$ and the nontrivial representation $\R^\sigma$.
We write elements of $RO(C_2)$ as 
\[
 a + b\sigma = \R^{a + b\sigma} = \R^a\dirsum (\R^\sigma)^b.
\]
We shall also consider the cohomology theory to take values in Mackey functors.
Explicitly, for a $C_2$-spectrum $X$, we write
\[
 \Mackey H_{C_2}^{a+b\sigma}(X;\Mackey A)(C_2/J)
  = [(C_2/J)_+\smsh X, \susp^{a+b\sigma}H\Mackey A]^{C_2}
  \iso [X, \susp^{a+b\sigma}H\Mackey A]^J.
\]
Note that the value at level $C_2/e$ is just the nonequivariant cohomology of $X$ with $\Z$ coefficients,
while the value at $C_2/C_2$ is what is usually thought of as the equivariant cohomology, which,
when we need it, we shall write as
\[
 H_{C_2}^{a+b\sigma}(X;\Mackey A) = \Mackey H_{C_2}^{a+b\sigma}(X;\Mackey A)(C_2/C_2).
\]
We will usually abbreviate to $\Mackey H_{C_2}^{a+b\sigma}(X)$
with the coefficient system understood to be $\Mackey A$.
We write
\[
 \Mackey H_{C_2}^{RO(C_2)}(X) = \Dirsum_{\alpha\in RO(C_2)} \Mackey H_{C_2}^{\alpha}(X)
\]
%OldSean: Should the subscript for this direct sum be different?
%Steve: changed
%OldSean: I hope you didn't mind this way of communicating. I wasn't sure if some of these things had other motivations or were things you were attached to. I like all the changes. Thanks!!
for the whole $RO(C_2)$-graded Mackey-valued ordinary cohomology of $X$.
When $X$ is a based $C_2$-space, we shall write
\[
 \Mackey H_{C_2}^{RO(C_2)}(X) = \Mackey H_{C_2}^{RO(C_2)}(\susp^\infty_{C_2} X).
\]

The cohomology $\Mackey H_{C_2}^{RO(C_2)}(X)$ is a module
over the ring $\Mackey H_{C_2}^{RO(C_2)}(S^0)$. In $\Z$-grading this ring is just $\Mackey A$
in degree 0 and 0 elsewhere, but in gradings outside of $\Z$ it is much more interesting.
The calculation was first done by Stong, in an unpublished manuscript, and
was first published by Lewis in \cite{LewisCP}.
To describe the result we first need to give names to the several Mackey functors that
appear:
\[
\begin{array}{rcrc}
 \Mackey \Z\colon & \xymatrix{
		\Z \ar@/_/[d]_{1} \\
		\Z \ar@/_/[u]_{2} \ar@(dl,dr)[]_{1}
	}
&\qquad\qquad
 \Mackey \Z_{-}\colon & \xymatrix{
		0 \ar@/_/[d] \\
		\Z \ar@/_/[u] \ar@(dl,dr)[]_{-1}
	 }
\\ \\
 \Mackey \Z'\colon & \xymatrix{
		\Z \ar@/_/[d]_{2} \\
		\Z \ar@/_/[u]_{1} \ar@(dl,dr)[]_{1}
	}
&\qquad\qquad
 \Mackey \Z'_{-}\colon & \xymatrix{
		\Z/2 \ar@/_/[d]_{0} \\
		\Z \ar@/_/[u]_{\pi} \ar@(dl,dr)[]_{-1}
	 }
\\ \\
 \conc\Z \colon & \xymatrix{
		\Z \ar@/_/[d] \\
		0 \ar@/_/[u] \ar@(dl,dr)[]_{}
	   }
&\qquad\qquad
 \conc{\Z/2}\colon & \xymatrix{
			\Z/2 \ar@/_/[d] \\
			0 \ar@/_/[u] \ar@(dl,dr)[]
	     }

\end{array}
\]
The Mackey functor $\Mackey \Z$
is usually referred to as the constant $\Z$ functor.
We can now picture the $RO(C_2)$-graded cohomology of $S^0$ as in Figures~\ref{fig:cohomPoint} and \ref{fig:cohomPointGens}.
There are various choices made in the literature as to what axes to use, but throughout
this paper we will use axes that place the group with grading $a+b\sigma$
at $(a, b)$.
Figure~\ref{fig:cohomPoint} shows the Mackey functors while 
Figure~\ref{fig:cohomPointGens} gives names to generators of those functors.
\begin{figure}
\[\def\objectstyle{\scriptstyle}
 \xymatrix@!0@R=5ex@C=2.5em{
  & & & & &\phantom{a}\ b & & & & & \\
  & \conc{\Z/2} &\cdot& \conc{\Z/2} &\cdot& \conc\Z &\cdot& \cdot &\cdot& \cdot & \cdot \\
  & \Mackey\Z &\cdot& \conc{\Z/2} &\cdot& \conc\Z &\cdot& \cdot &\cdot& \cdot & \cdot \\
  & \cdot &\Mackey\Z_{\mathrlap{-}}& \conc{\Z/2} &\cdot& \conc\Z &\cdot& \cdot &\cdot& \cdot & \cdot \\
  & \cdot &\cdot& \Mackey\Z &\cdot& \conc\Z &\cdot& \cdot &\cdot& \cdot & \cdot \\
  & \cdot &\cdot& \cdot &\Z_{\mathrlap{-}}& \conc\Z &\cdot& \cdot &\cdot& \cdot & \cdot \\
  \ar@{-}'[rrrrr][rrrrrrrrrrr]
   &  &  &  &  & \Mackey A &  &  &  &  &  & a\\
  & \cdot &\cdot& \cdot &\cdot& \conc\Z &\Mackey \Z_{\mathrlap{-}}& \cdot & \cdot & \cdot & \cdot  \\
  & \cdot &\cdot& \cdot &\cdot& \conc\Z &\cdot& \Mackey \Z' & \cdot & \cdot & \cdot  \\
  & \cdot &\cdot& \cdot &\cdot& \conc\Z &\cdot& \cdot & \Mackey \Z'_{\mathrlap{-}} & \cdot & \cdot  \\
  & \cdot &\cdot& \cdot &\cdot& \conc\Z &\cdot& \cdot & \conc{\Z/2} & \Mackey \Z' &  \cdot  \\
  & \cdot &\cdot& \cdot &\cdot& \conc\Z &\cdot& \cdot & \conc{\Z/2} & \cdot &  \Mackey \Z'_{\mathrlap{-}}  \\
  & \cdot &\cdot& \cdot &\cdot& \conc\Z &\cdot& \cdot & \conc{\Z/2} & \cdot &  \conc{\Z/2}  \\
  & & & & & \ar@{-}'[u]'[uu]'[uuu]'[uuuu]'[uuuuu]'[uuuuuu]'[uuuuuuu]'[uuuuuuuu]'[uuuuuuuuu]'[uuuuuuuuuu]'[uuuuuuuuuuu]'[uuuuuuuuuuuu][uuuuuuuuuuuuu]
 }
\]
\caption{$\protect\Mackey H_{C_2}^{RO(C_2)}(S^0)$ at $a+b\sigma$}\label{fig:cohomPoint} 
\end{figure}

\begin{figure}
\[\def\objectstyle{\scriptstyle}
 \xymatrix@!0@R=5ex@C=3em{
  & & & & & \phantom{a}\  b & & & & & \\
  & e\xi^2 &\cdot& e^3\xi &\cdot& e^5 &\cdot& \cdot &\cdot& \cdot & \cdot \\
  & \xi^2 &\cdot& e^2\xi &\cdot& e^4 &\cdot& \cdot &\cdot& \cdot & \cdot \\
  & \cdot &(\iota^3) & e\xi &\cdot& e^3 &\cdot& \cdot &\cdot& \cdot & \cdot \\
  & \cdot &\cdot& \xi &\cdot& e^2 &\cdot& \cdot &\cdot& \cdot & \cdot \\
  & \cdot &\cdot& \cdot &(\iota)& e &\cdot& \cdot &\cdot& \cdot & \cdot \\
  \ar@{-}'[rrrrr][rrrrrrrrrrr]
   &  &  &  &  & 1 &  &  &  &  &  & a\\
  & \cdot &\cdot& \cdot &\cdot& e^{-1}\kappa &(\iota^{-1})& \cdot & \cdot & \cdot &  \cdot  \\
  & \cdot &\cdot& \cdot &\cdot& e^{-2}\kappa &\cdot& (\iota^{-2}) & \cdot & \cdot & \cdot  \\
  & \cdot &\cdot& \cdot &\cdot& e^{-3}\kappa &\cdot& \cdot & (\iota^{-3}) & \cdot &  \cdot  \\
  & \cdot &\cdot& \cdot &\cdot& e^{-4}\kappa &\cdot& \cdot & e^{-1}\tau(\iota^{-3}) & (\iota^{-4}) &  \cdot  \\
  & \cdot &\cdot& \cdot &\cdot& e^{-5}\kappa &\cdot& \cdot & e^{-2}\tau(\iota^{-3}) & \cdot &  (\iota^{-5})  \\
  & \cdot &\cdot& \cdot &\cdot& e^{-6}\kappa &\cdot& \cdot & e^{-3}\tau(\iota^{-3}) & \cdot &  e^{-1}\tau(\iota^{-5})  \\
  & & & & & \ar@{-}'[u]'[uu]'[uuu]'[uuuu]'[uuuuu]'[uuuuuu]'[uuuuuuu]'[uuuuuuuu]'[uuuuuuuuu]'[uuuuuuuuuu]'[uuuuuuuuuuu]'[uuuuuuuuuuuu]'[uuuuuuuuuuuuu]
 }
\]
\caption{The generators of $\protect\Mackey H_{C_2}^{RO(C_2)}(S^0)$ at $a+b\sigma$}\label{fig:cohomPointGens}
\end{figure}

The element $\iota \in \Mackey H_{C_2}^{-1+\sigma}(S^0)(C_2/e)$ 
(the parentheses in Figure~\ref{fig:cohomPointGens} are to indicate that $\iota$ lives at level $C_2/e$)
is an invertible element representing
the fact that, nonequivariantly, $a+b\sigma = a+b$ so that the 
only non-zero group in the nonequivariant cohomology of a point
appears duplicated along the line $a+b = 0$ at level $C_2/e$.

The element $e \in \Mackey H_{C_2}^{\sigma}(S^0)(C_2/C_2)$ is the Euler class of $\R^\sigma$, % the image under
%the Hurewicz map of the cohomotopy class of the inclusion $S^0 \to S^\sigma$,
%OldSean: do we really want to talk about cohomotopy here? I understand it is correct, but is it necessary? especiallt since there is the second clause.
%Steve: removed the mention of cohomotopy
the image of $1$ under the map
\[
 H_{C_2}^0(S^0) \iso H_{C_2}^\sigma(S^\sigma) \to H_{C_2}^\sigma(S^0),
\]
where the last map is restriction along the inclusion $S^0\to S^\sigma$.
Note that $\rho(e) = 0$. (For simplicity of notation, we will from now on write $\rho$ for the structural map $\rho^*$ in a Mackey functor and $\tau$ for $\tau^*$.) We also have that $\kappa e = 2e$.

The element $\xi\in \Mackey H_{C_2}^{-2 + 2\sigma}(S^0)(C_2/C_2)$ satisfies $\rho(\xi) = \iota^2$.
This implies that 
\[
 \xi\cdot\tau(\iota^{k}) =  \tau(\rho\xi\cdot \iota^k) = \tau(\iota^{k+2}).
\]
We also have that $\kappa\xi = \xi$ and $2e\xi = 0$.

Of the elements with $a + b < 0$, the notation is to indicate the following identities:
\begin{align*}
 e^m\cdot e^{-n}\kappa &= e^{m-n}\kappa
\\
 e^m\cdot e^{-n}\tau(\iota^{-(2k+1)})
  &= \begin{cases}
  		e^{m-n}\tau(\iota^{-(2k+1)}) & \text{if $m \leq n$} \\
		0 & \text{if $m > n$}
	 \end{cases}
\\
 \xi^m \cdot e^{-n}\tau(\iota^{-(2k+1)})
  &= \begin{cases}
  		e^{-n}\tau(\iota^{-(2(k-m)+1)}) & \text{if $m < k$} \\
		0 & \text{if $m \geq k$.}
	 \end{cases}
\end{align*}
%OldSean: what is \delta?
%Steve: I've rethought that notation recently and changed it here. So there's no more \delta in the notation.
More explanation and a full derivation appears in \cite{Co:BGU1preprint}.
Although $RO(C_2)$-graded cohomology rings are in general anti-commutative,
$\Mackey H_{C_2}^{RO(C_2)}(S^0)$ is strictly commutative.

\begin{remark}[Relationship with other common notations]\label{rem:othernotations}

The related and somewhat simpler ring $\Mackey H_{C_2}^{RO(C_2)}(S^0;\Mackey \Z)$
has been used extensively in recent literature. There is a map
$\epsilon_*\colon \Mackey H_{C_2}^{RO(C_2)}(S^0;\Mackey A) \to \Mackey H_{C_2}^{RO(C_2)}(S^0;\Mackey \Z)$
induced by the map $\Mackey A\to \Mackey \Z$ that is $\epsilon$ at level $C_2/C_2$.
Elements of $\Mackey H_{C_2}^{RO(C_2)}(S^0;\Mackey \Z)$ have been given
a multitude of different names.
Under $\epsilon_*$, $e$ maps to the element called
$a$ in \cite{HuKr:adamsnovikov}, $a_\sigma$ in \cite{HHRKervaire}, 
and $\rho$ in the motivic literature, for example in \cite{DuggerGrass}
(in which the coefficients are further reduced to the constant $\Z/2$ functor).
The element $\xi$ maps to the element called $u_{2\sigma}$ in \cite{HHRKervaire} and
the element $\tau^2$ in \cite{DuggerGrass},
whereas $\tau(\iota^{-2})$ maps to the element $\theta$ in \cite{DuggerGrass}.
In \cite{Dug:AHSSforKR}, Dugger uses constant $\Z$ coefficients but a completely different
set of names. The element he calls $\theta$ there is the image of our $\tau(\iota^{-3})$,
while the image of $\tau(\iota^{-2})$ is called $\alpha$.
\end{remark}

\subsection{$RO(\Pi B)$-graded cohomology}\label{sub ROPi}

As discussed in the introduction, $RO(G)$-graded cohomology is inadequate
for the study of equivariant vector bundles and manifolds.
 A basic reason for this is the lack of a Thom isomorphism theorem for general vector bundles
and the lack of Poincar\'e duality for general manifolds.
The Thom isomorphism should restrict to the local suspension isomorphisms induced by the fibers of the bundle.
But, the suspension isomorphism induced by a fiber $V$ induces a shift in grading by $V$,
and this shift is going to vary as we move around the base space because the representation $V$ may vary.
So the global dimension shift cannot be captured by a single representation of the 
group---we need a grading that allows for these varying representations.

This was part of the impetus behind the first author's work with J. Peter May and
Stefan Waner that led to \cite{CMW:orientation}, which introduced the notion of
the representation ring of the equivariant fundamental groupoid of a space,
and we should think of the dimension of a bundle or a manifold as living in this ring. 
For the general definition we refer the reader to \cite{CMW:orientation}; until the end of this subsection we will restrict our attention to the special case we shall use throughout the remainder of this paper.
We shall consider only $C_2$-spaces parametrized by a particular base space, the $C_2$-equivariant
classifying space modeled by the infinite complex projective space
\[
 B = B_{C_2}U(1) = \PP(\C^\infty\dirsum \C^{\infty\sigma}).
\]
Here, $\C$ denotes the trivial complex representation of $C_2$ while $\C^\sigma$
is the nontrivial complex representation.
We shall grade all of our cohomology groups on the representation ring of the fundamental groupoid of $B$,
which we now describe.

The equivariant fundamental groupoid of $B$ is a category $\Pi_{C_2}B$
(which we shall usually write as $\Pi B$, the group $C_2$ being understood) fibered over
the (unstable) orbit category $\orb{C_2}$. The fiber over $C_2/e$ is the nonequivariant fundamental groupoid
of $B$ while the fiber over $C_2/C_2$ is the nonequivariant fundamental groupoid of
\[
 B^{C_2} = \Xp{\infty} \disjunion \Xq{\infty}.
\]
We can picture a skeleton of $\Pi B$ as follows:
\[
 \xymatrix@!C=.5em{
  b_0 && b_1 & \qquad\qquad & C_2/C_2 \\
  &b \ar[ul] \ar[ur] \ar@(dl,dr)[]_t &&& C_2/e \ar[u]_\rho \ar@(dl,dr)[]_t \\
  &\Pi  B &&& \orb{C_2}
 }
\]
Because $B$ is nonequivariantly simply connected, a skeleton of its nonequivariant
fundamental groupoid can be taken as a single point $b$. On the other hand, $B^{C_2}$ consists
of two simply connected components, so a skeleton of its fundamental groupoid consists of
two points, $b_0$ and $b_1$. There are then unique maps in the category from $b\to b_k$
covering the projection $\rho$, and a unique self-map $t\colon b\to b$ covering the
switch map $t\colon C_2/e \to C_2/e$, with $t^2 = 1$.

A {\em representation} of $\Pi  B$ is a functor $\gamma$ over $\orb{C_2}$ from $\Pi B$ to the category
of $C_2$-vector bundles over orbits and $C_2$-homotopy classes of equivariant bundle maps. That is, it assigns to $b$ a $C_2$-bundle over $C_2/e$ and to $b_0$ and $b_1$
$C_2$-bundles over $C_2/C_2$. These will have the form
\begin{align*}
 \gamma(b) &= C_2 \times \R^n \\
 \gamma(b_0) &= \R^{k_0 + \ell_0\sigma} \\
 \gamma(b_1) &= \R^{k_1 + \ell_1\sigma}.
\end{align*}
We will also have equivariant bundle maps $C_2\times \R^n \to \R^{k_k + \ell_k\sigma}$, which implies that
%OldSean: it isn't that the maps are equivariant that implies this, it is that a representation is a morphism of categories fibered in groupoids over the same orbit category.
%Steve: I'm not sure what you mean. That the total dimensions are equal is all we're talking about in this sentence.
%OldSeanL I agree, but the reason they are exual is the morphism in question "lives" over, n the sense of fibered categories, a unique map. Ok, now I am not so sure. But I do not see how equivariance implies the dimensions must be the same since there are injections of representations.
%Steve: I've added the word bundle: Bundle maps are usually understood to mean isomorphisms on fibers, which is what is required here. I also said a bit more about the target category of a representation.
\[
 k_0 + \ell_0 = k_1 + \ell_1 = n.
\]
Further, to be compatible with the map $\gamma(t)$, we must have
that $\ell_0$ and $\ell_1$ have the same parity. These conditions completely determine the possible
representations. The ring $RO(\Pi B)$ is then the Grothendieck group of representations
of $\Pi B$ under the operation of direct sum.
%OldSean: is it direct sum of bundles or of representations. I think we can just say direct sum and it will be clear.
%Steve: changed. In the case of bundles over C_2/e, it really does mean sums of bundles, but I'm all for simplicity.
The product in this ring is induced by the tensor product of bundles, but we will use primarily
the additive structure.
Explicitly, $RO(\Pi B)$ is a free abelian group generated by $1 = \R$, $\sigma = \R^\sigma$,
and the (virtual) representation $\Omega_1$ that takes value 0 on $b_0$ and $\R^{2\sigma}-\R^2$ on $b_1$.
It is convenient to also name a representation $\Omega_0$ that takes value $\R^{2\sigma}-\R^2$
on $b_0$ and 0 on $b_1$, and we have then that $\Omega_0 + \Omega_1 = 2\sigma - 2$.

Any vector bundle over $B$ has an associated representation in $RO(\Pi B)$ given by
considering its fibers over $b$, $b_0$, and $b_1$. For example, the tautological
complex line bundle $\omega$ over $B$ has fiber $\C = \R^2$ over $b_0$ and $\C^\sigma = \R^{2\sigma}$
over $b_1$, so its associated representation, which we write as $\omega^*$
or simply $\omega$ when the context is clear, is
\[
 \omega^* = 2 + \Omega_1.
\]
We will use the fact that $\{1, \sigma, \omega\}$ is also a basis for $RO(\Pi B)$.
We can also consider the line bundle $\chiw = \omega\tensor_{\C}\C^\sigma$.
Its associated representation is
\[
 \chiw ^* = 2 + \Omega_0.
\]
Note that $\omega^* + \chiw^* = 2 + 2\sigma$. 

For $\gamma\in RO(\Pi B)$ we shall write $|\gamma|\in\Z = RO(\Pi_e B)$ for the dimension of the underlying representation $\gamma(b)$,
so
\begin{align*}
    |1| &= 1 \\
    |\sigma| &= 1 \\
    |\Omega_0| &= 0 \\
    |\Omega_1| &= 0 \\
    |\omega| &= 2.
\end{align*}
We shall write $\gamma^{C_2} \in \Z\dirsum\Z = RO(\Pi_e B^{C_2})$ for the pair consisting of the dimensions of
$\gamma(b_0)^{C_2}$ and $\gamma(b_1)^{C_2}$, respectively, so
\begin{align*}
    1^{C_2} &= (1,1) \\
    \sigma^{C_2} &= (0,0) \\
    \Omega_0^{C_2} &= (-2,0) \\
    \Omega_1^{C_2} &= (0,-2) \\
    \omega^{C_2} &= (2,0).
\end{align*}

The $RO(\Pi B)$-graded ordinary cohomology is defined on $C_2$-ex-spaces over $B$,
that is, triples $(X,p,\sigma)$ where $X$ is a $C_2$-space, $p\colon X\to B$ is a $C_2$-map,
and $\sigma\colon B\to X$ is a section of $p$,
so that $p\sigma = 1$. If $(X,p)$ is simply a $C_2$-space over $B$,
we form the ex-space $(X,p)_+ = X_+ = (X\disjunion B,p,\sigma)$, with $p$ extended to be the identity
on $B$ and $\sigma$ the section that takes $B$ identically to the added copy of itself.
The groups $\Mackey H_{C_2}^\gamma(X)$ are defined in \cite{CostenobleWanerBook}
via chain complexes, but we can describe how they are represented fairly simply.
For each $\gamma\in RO(\Pi B)$ there is a unique $C_2$-spectrum $H\Mackey A^\gamma$ parametrized by $B$
with the property that the fiber over $b$ is $(C_2)_+\smsh \susp^{|\gamma(b)|} H\Z$ while
the fiber over $b_k$ is $\susp^{\gamma(b_k)}H\Mackey A$.
(The development in \cite{CostenobleWanerBook} used \cite{MaySig:parametrized}
for the foundations of parametrized stable homotopy theory. Because that foundation
does not support a good theory of CW parametrized spectra, there is the possibility
that $H\Mackey A^\gamma$ is determined only up to non-unique equivalence.)

We can take advantage of the fact that all elements of $RO(\Pi B)$ come from vector bundles
over $B$ (something that is not true of all $C_2$-spaces) to be more explicit: For each $n\in\Z$ we
can write
\[
 H\Mackey A^{n\omega} = \susp_B^{n\omega} r^*H\Mackey A
\]
where $r\colon B\to *$ is projection to a point, so $r^*$ is the induced pullback
functor taking non-parametrized $C_2$-spectra to spectra over $B$.
Here, for $n\geq 0$, $\susp_B^{n\omega}$ denotes the fiberwise smash product over $B$ with the
one-point compactifications of the fibers of $n\omega$; for $n < 0$ it is the corresponding desuspension.
With future work in mind in which we may use base spaces for which not all representations
of their fundamental groupoids come from bundles, we will avoid using this description
of $H\Mackey A^{n\omega}$, but the reader is free to make this substitution to
simplify some of the arguments.

For $X$ any $C_2$-ex-space over $B$ we then have
%\begin{align*}
% \Mackey H_{C_2}^{a+b\sigma+n\omega}(X)(C_2/J) 
% &= [(C_2/J)_+\smsh\susp_B^\infty X, \susp^{a+b\sigma}H\Mackey A^{n\omega}]^{C_2}_B \\
% &= [(C_2/J)_+\smsh\susp_B^\infty X, \susp^{a+b\sigma}\susp_B^{n\omega}r^* H\Mackey A]^{C_2}_B.
%\end{align*}
\begin{align*}
 \Mackey H_{C_2}^{a+b\sigma+n\omega}(X)(C_2/J) 
 &= [(C_2/J)_+\smsh\susp_B^\infty X, \susp^{a+b\sigma}H\Mackey A^{n\omega}]^{C_2}_B \\
 &= [\susp_B^\infty X, \susp^{a+b\sigma}H\Mackey A^{n\omega}]^J_B
\end{align*}
for $J = C_2$ or $e$.
The development in \cite{CostenobleWanerBook} makes it clear that these groups depend only
on the representations in $RO(\Pi B)$, not how they may (or may not) be represented by vector bundles.
We write
\[
 \Mackey H_{C_2}^{RO(\Pi B)}(X) = \Dirsum_{a+b\sigma+n\omega} \Mackey H_{C_2}^{a+b\sigma+n\omega}(X).
\]
This is a module over $H_{C_2}^{RO(C_2)}(S^0)$ in the obvious way, viewing $RO(C_2)$ as a subgroup
of $RO(\Pi B)$.

There is also an associated homology theory defined by
%\begin{align*}
% \Mackey H^{C_2}_{a+b\sigma+n\omega}(X)(C_2/J) 
%  &= [(C_2/J)_+\smsh S^{a+b\sigma}, r_!(X\smsh_B H\Mackey A^{-n\omega})]^{C_2} \\
%  &= [(C_2/J)_+\smsh S^{a+b\sigma}, r_!(X\smsh_B \susp_B^{-n\omega} r^*H\Mackey A)]^{C_2} \\
%  &\iso [(C_2/J)_+\smsh S^{a+b\sigma}, r_!(\susp_B^{-n\omega}X) \smsh H\Mackey A]^{C_2}
%\end{align*}
\[
 \Mackey H^{C_2}_{a+b\sigma+n\omega}(X)(C_2/J) 
  = \Big[(C_2/J)_+\smsh S^{a+b\sigma}, r_!(X\smsh_B H\Mackey A^{-n\omega})\Big]^{C_2}
\]
where $r_!$ is the left adjoint to $r^*$. On a $C_2$-ex-space $Y$, the effect of $r_!$
is to identify all points in the image of the section $B\to Y$ to a single point,
giving a based (non-parametrized) $C_2$ space.
It may help the reader to take advantage of the fact that $\omega$ is a bundle and think of
\[
 r_!(X\smsh_B H\Mackey A^{-n\omega})
  = r_!(X\smsh_B \susp_B^{-n\omega} r^*H\Mackey A)
  \hmtpc r_!(\susp_B^{-n\omega}X) \smsh H\Mackey A.
\]

\subsection{Change of base}

For the remainder of this section, $B$ will denote a general base space. 
The fundamental change of base result is this: Given a map of $C_2$-spaces $f\colon A\to B$
and an ex-space $(X,p,\sigma)$ over $A$, we can form $f_! X = (f_! X,p',\sigma')$, 
the ex-space over $B$ given by
taking the pushout in the top square of the following diagram:
\[
 \xymatrix{
	A \ar[r]^f \ar[d]_\sigma & B \ar[d]^{\sigma'} \\
	X \ar[r] \ar[d]_p & f_! X \ar[d]^{p'} \\
	A \ar[r]_f & B
 }
\]
We then have
\[
 \Mackey H_{C_2}^\gamma(f_! X) \iso \Mackey H_{C_2}^{f^*\gamma}(X)
\]
for $\gamma\in RO(\Pi B)$. (This is \cite[3.8.2]{CostenobleWanerBook}.) 
In particular, if $X$ is any space over $B$ and
we take $A = X$ and $p\colon X\to A$ the identity,
we see that the Mackey functor $\Mackey H_{C_2}^\gamma(f_! X_+)$ depends only on
$f^*\gamma\in RO(\Pi X)$, not $\gamma$ itself.
As a result, for every $\gamma\in \ker f^*$, 
we will have an isomorphism in $\Mackey H_{C_2}^{RO(\Pi B)}(X_+)$ of the form
\[
 \Mackey H_{C_2}^0(X_+) \iso \Mackey H_{C_2}^\gamma(X_+)
\]
induced by multiplication by a unit in $\Mackey H_{C_2}^\gamma(X_+)$ that is
the image of $1$ under the isomorphism above.

\subsection{The Thom isomorphism and Poincar\'e duality}

If $\eta$ is a vector bundle over a $C_2$-space $X$, considered as a space over itself,
we now have, for any $\gamma\in RO(\Pi X)$,
\begin{align*}
 \Mackey H_{C_2}^{\gamma}(\susp_X^\eta X_+)
  &= [\susp_X^\infty \susp_X^\eta X_+, H\Mackey A^\gamma]^{C_2}_X \\
  &\iso [\susp_X^\infty X_+, \susp_X^{-\eta}H\Mackey A^\gamma]^{C_2}_X \\
  &= \Mackey H_{C_2}^{\gamma - \eta}(X_+),
\end{align*}
which is the Thom isomorphism theorem.
This isomorphism is given by multiplication by the {\em Thom class}
$t(\eta)\in H_{C_2}^\eta(\susp_X^\eta X_+)$, which is the element corresponding to $1\in H_{C_2}^0(X_+)$.
The {\em Euler class} of $\eta$ is then the element $e(\eta) \in H_{C_2}^\eta(X_+)$ given by
restricting the Thom class to the zero section.
(See \cite[\S 3.11.1]{CostenobleWanerBook} for more details.)

If $X$ is a space over $B$ and the representation $\eta^*\in RO(\Pi X)$ is the image of an element
$\eta\in RO(\Pi B)$, the change of base isomorphism tells us that we have a similar
isomorphism
\[
 \Mackey H_{C_2}^\gamma(\susp_B^\eta X_+) \iso \Mackey H_{C_2}^{\gamma-\eta}(X_+)
\]
when grading on $RO(\Pi B)$.

If $M$ is a smooth closed $C_2$-manifold thought of as a space over itself, and $\tau$ is its tangent bundle, 
we have Poincar\'e duality:
There is a class $[M]\in  H^{C_2}_{\tau}(M_+)$ such that
\[
 -\cap [M] \colon \Mackey H_{C_2}^\gamma(M_+) \to \Mackey H^{C_2}_{\tau-\gamma}(M_+)
\]
is an isomorphism for all $\gamma\in RO(\Pi M)$. This is a direct consequence of the Thom isomorphism and
the calculation of the stable dual of $M_+$; the details are given in \cite[\S 3.11.2]{CostenobleWanerBook}.
If $M$ is a manifold over $B$ and $\tau^*\in RO(\Pi M)$ is the image of a representation
$\tau\in RO(\Pi B)$, we get a similar isomorphism when grading on $RO(\Pi B)$.

The manifolds $\Xpq{p}{q}:=\PP(\C^p\dirsum\C^{q\sigma})$ we are interested in here
are naturally manifolds over $B = \PP(\C^\infty\dirsum\C^{\infty\sigma})$
by inclusion. Moreover, their tangent representations do come from $RO(\Pi B)$ because
the restriction map 
$RO(\Pi B) \to RO(\Pi \Xpq{p}{q})$
is an epimorphism for all $p$ and $q$, and an isomorphism if both $p$ and $q$ are positive.

Poincar\'e duality allows us define to push-forward maps as follows. 

\begin{definition}\label{def:pushforward}
Suppose that $f\colon M\to N$ is a map of smooth $C_2$-manifolds with tangent bundles
$\tau_M$ and $\tau_N$, respectively.
The {\em push-forward} map
\[
 f_!\colon \Mackey H_{C_2}^{\gamma+\tau_M}(M_+) \to \Mackey H_{C_2}^{\gamma+\tau_N}(N_+).
\]
for $\gamma\in RO(\Pi N)$,
is defined by the following commutative diagram, in which the vertical maps are Poincar\'e duality
isomorphisms.
\[
 \xymatrix{
	\Mackey H_{C_2}^{\gamma+\tau_M}(M_+) \ar@{.>}[r]^-{f_!} \ar[d]_\iso
		& \Mackey H_{C_2}^{\gamma+\tau_N}(N_+) \ar[d]^\iso \\
	\Mackey H^{C_2}_{-\gamma}(M_+) \ar[r]_{f_*}
		& \Mackey H^{C_2}_{- \gamma}(N_+)
 }
\]
\end{definition}

\begin{scholium}
The push-forward is only defined for $\gamma\in RO(\Pi N)$.
We cannot get around requiring this condition---that is, 
we cannot define the push-forward on all gradings in $RO(\Pi M)$.
\end{scholium}

We will use the push-forward in the following form.
If $M$ and $N$ are manifolds over $B$, $f$ is a map over $B$,
and $\eta\in RO(\Pi B)$ is a representation such that $\eta = \tau_N-\tau_M$ in $RO(\Pi M)$,
then we get a push-forward map
\[
    f_!\colon \Mackey H_{C_2}^{\gamma}(M_+) \to \Mackey H_{C_2}^{\gamma+\eta}(N_+)
\]
for $\gamma\in RO(\Pi B)$.
To see this, write $\gamma = (\gamma+\eta-\tau_N) + \tau_M$ in $RO(\Pi M)$ and 
apply the push-forward as defined above.
In this case, the push-forward is a map of $\Mackey H_{C_2}^{RO(\Pi B)}(B_+)$-modules.

\begin{remark}\label{rem:ambiguity}
This form of the push-forward depends on the choice of $\eta$ which, in general, is determined
only up to addition of an element in the kernel of the restriction map $RO(\Pi B)\to RO(\Pi M)$.
In many, but not all of the cases where we shall use the push-forward, that restriction map is an isomorphism,
in which case $\eta$ and the push-forward are uniquely determined.
\end{remark}

In the case of a smooth inclusion $i\colon M\to N$ over $B$ we can describe the push-forward
in another way: Let $\nu$ be the normal bundle to $M$ in $N$ and
let $U\subset N$ be a normal neighborhood, 
a closed neighborhood of $M$ homeomorphic to the unit disc bundle of $\nu$,
with $\bndry U$ homeomorphic to the unit sphere bundle.
Let $U/_B \bndry U \to B$ denote the quotient ex-space over $B$ and let
$c\colon N_+\to U/_B\bndry U$ be the collapse map over $B$. 
Then $i_!$ can also be described as the composite
\[
 \Mackey H_{C_2}^{\gamma+\tau_M}(M_+) \xrightarrow{\iso}
   \Mackey H_{C_2}^{\gamma+\tau_N}(U/_B \bndry U) \xrightarrow{c^*}
   \Mackey H_{C_2}^{\gamma+\tau_N}(N_+)
\]
where the first map is the Thom isomorphism.

In a special case we can say more, generalizing a nonequivariant result.

\begin{proposition}\label{Prop fund}
Suppose that $i\colon M\to N$ is a smooth inclusion of manifolds over $B$.
Suppose also that the normal bundle $\nu$ to $M$ in $N$ extends to a bundle
$\eta$ on all of $N$ and that there is a $C_2$-equivariant section $s\colon N\to \eta$ such that
$s^{-1}(0) = M$. 
% Moreover, assume that the representations $\tau_M$, $\tau_N$, and $\eta$ all come
% from representations in $RO(\Pi B)$ with $\eta = \tau_N - \tau_M$.
Then we have
\[
 i_! i^*(x) = e(\eta)x
\]
for all $x\in \Mackey H_{C_2}^{RO(\Pi B)}(N_+)$.
\end{proposition}

\begin{proof}
Let $U$ be a normal neighborhood of $M$ in $N$, as in the discussion above.
Consider the composite
\[
 N_+ \xrightarrow{c} U/_B\bndry U \hmtpc \susp_B^\nu M_+ \xrightarrow{i} \susp_B^\eta N_+.
\]
Given the section $s$, this composite is homotopic to the zero section of $\eta$.
The proposition follows from the fact that the Thom isomorphism followed by restriction
to the zero section is the same as multiplication by the Euler class. 

Note that we are using the following compatibility between the cup product and the $\Mackey H_{C_2}^{RO(\Pi B)}(N_+)$-module structure induced by $i^*$ on $\Mackey H_{C_2}^{RO(\Pi B)}(M_+)$, sometimes known as projection formula: 
$$i_!(y\cup i^*(x)) = i_!(y)\cup x.$$
In this particular case we take $y=1$.
\end{proof}

We shall use the following notation extensively in our calculations.

\begin{definition}\label{def:fundcohomologyclass}
Let $i:M\rightarrow N$ be a map of manifolds over $B$ together with a representation $\eta\in RO(\Pi B)$ such that $\eta = \tau_N - \tau_M$ in $RO(\Pi M)$. 
We set
$$ [M]_\eta^*:=i_!(1_M)\in \Mackey H_{C_2}^\eta(N_+),$$
which we refer to as the \textit{fundamental cohomology class} associated to $\eta\in RO(\Pi B)$.
\end{definition}
% can we say fundamental cohomology class of M? it sounds like the thing depends on \eta the way it is written.
\begin{remark}
If we think of grading on $RO(\Pi N)$, then,
by definition, the class $[M]_\eta^*$ is Poincar\'e dual to the fundamental homology class $i_*([M]) \in \Mackey H^{C_2}_{\tau_N-\eta}(N_+)$. 
As in Remark~\ref{rem:ambiguity}, the push-forward, hence the element $[M]_\eta^*$, depends on the choice of $\eta$,
which is determined only up to an element in the kernel of the restriction $RO(\Pi B)\to RO(\Pi M)$. Even though the notation does not reflect it, it should be noted that the fundamental cohomology class also depends on the map $i$.
\end{remark}

\subsection{Restriction maps}\label{subsec:restrictions}

There are two interesting maps taking $C_2$-cohomology to nonequivariant
cohomology: restriction to the trivial group and the fixed-point functor.

Because we are viewing cohomology as Mackey functor--valued, our calculations have built in the
calculations of the restriction map from $C_2$-cohomology with coefficients in $\Mackey A$
to nonequivariant cohomology with coefficients in $\Z$. The restriction map is given by
\[
 \rho^*\colon H_{C_2}^\gamma(X;\Mackey A)
	= \Mackey H_{C_2}^\gamma(X;\Mackey A)(C_2/C_2)
 	\to \Mackey H_{C_2}^\gamma(X;\Mackey A)(C_2/e)
	= H^{|\gamma|}(X;\Z).
\]
(Here, as introduced earlier, we write $|\gamma|$ for the nonequivariant dimension of $\gamma$.)
On the represented level, this is given by forgetting the action of $C_2$.
In the case we are interested in, where $B = \PP(\C^{\infty+\infty\sigma})$, we
use that $H\Mackey A^{n\omega}$ is nonequivariantly equivalent to 
$H(\Mackey A(C_2/e))^{n\omega} = H\Z^{n\omega} \hmtpc \susp^n H\Z$
using the nonequivariant Thom isomorphism and the canonical orientation of the complex bundle $\omega$.
(Alternatively, use that $H\Z^{n\omega}$ represents twisted cohomology, but our base space
is simply connected so the twisting is trivial.)

The fixed-point functor is a map
\[
 (-)^{C_2}\colon H_{C_2}^\gamma(X;\Mackey A) \to H^{\gamma^{C_2}}(X^{C_2};\Z).
\]
Note carefully that it takes fixed points of $\gamma$ as well as $X$, where we use the notation $\gamma^{C_2}$ introduced earlier
and by the notation on the right we mean
\[
 H^{\gamma^{C_2}}(X^{C_2};\Z) = \Dirsum_k H^{|\gamma(b_k)^{C_2}|}(X^{C_2}_k;\Z),
\]
where $X^{C_2}_k$ is the part of $X^{C_2}$ over the $k$th component of $B^{C_2}$, $k = 0$ or $1$.
If $\gamma^{C_2}$ has different dimensions over the different components of $B^{C_2}$,
then the target will consist of non-homogeneous elements.
This functor may be defined in several ways.
In \cite[3.9.8]{CostenobleWanerBook} it is defined as the composite
\[
 H_{C_2}^\gamma(X;\Mackey A) \to H_{C_2}^\gamma(X;\conc\Z) \iso H^{\gamma^{C_2}}(X^{C_2};\Z).
\]
Here, we have used a computation that, in the notation of \cite{CostenobleWanerBook},
$\Inf_{C_2/C_2}^{C_2}\Mackey A^{C_2} = \Inf_{C_2/C_2}^{C_2}\Z = \conc\Z$.
The isomorphism above is a special case of a very general one, but can be seen very simply in this instance
by noting that both sides are $RO(\Pi B)$-graded equivariant cohomology theories 
on spaces over $B$ and agree on spheres
over $B$. This isomorphism also appears in \cite[21.1]{Co:BGU1preprint}.

On the represented level, the fixed-point functor is induced
by the geometric fixed-point functor $\Phi^{C_2}$:
%\begin{align*}
% (-)^{C_2}\colon  [\susp_B^\infty X, \susp_B^\gamma r^* H\Mackey A]^G_B
%  &\to [\Phi^{C_2}(\susp_B^\infty X), \Phi^{C_2}(\susp_B^\gamma r^* H\Mackey A)]_B \\
%  &\iso [\susp_B^\infty X^{C_2}, \susp_B^{\gamma^{C_2}} r^*\Phi^{C_2}(H\Mackey A)]_B \\
%  &\to [\susp_B^\infty X^{C_2}, \susp_B^{\gamma^{C_2}} r^*\Phi^{C_2}(H\conc\Z)]_B \\
%  &\iso [\susp_B^\infty X^{C_2}, \susp_B^{\gamma^{C_2}} r^*H\Z]_B.
%\end{align*}
\begin{align*}
 (-)^{C_2}\colon  [\susp_B^\infty X, H\Mackey A^\gamma]^{C_2}_B
  &\to [\Phi^{C_2}(\susp_B^\infty X), \Phi^{C_2}(H\Mackey A^\gamma)]_B \\
  &\to [\susp_B^\infty X^{C_2}, \Phi^{C_2}(H\conc\Z^\gamma)]_B \\
  &\iso [\susp_B^\infty X^{C_2}, H\Z^{\gamma^{C_2}}]_B.
\end{align*}
Here, we are using the equivalence $\Phi^{C_2}(H\conc\Z^\gamma) \hmtpc H\Z^{\gamma^{C_2}}$, 
proved in \cite[3.9.20]{CostenobleWanerBook}.

\begin{remark}\label{rem:restrictions}
Both restriction maps enjoy useful formal properties, which we will need in our computations. 
By \cite[3.10.10]{CostenobleWanerBook} they respect cup products;
although not shown in \cite{CostenobleWanerBook}, it is straightforward to show
from the definitions there that they also respect cap products.
In view of Propositions~3.11.2 and 3.11.5 of \cite{CostenobleWanerBook},
they preserve Thom classes and fundamental homology classes. 
This implies that they respect the Thom isomorphism and Poincar\'e duality, that Euler classes are taken to Euler classes, and that fundamental cohomology classes are taken to fundamental cohomology classes.
\end{remark}

\subsection{The representing spectra as module spectra}

In \cite{CostenobleWanerBook} it was shown that the representing spectra
$H\Mackey A^\gamma$ are module spectra over the nonparametrized spectrum $H\Mackey A$.
We insert here a result we need that was not mentioned in that book and holds for any 
compact Lie group $G$.

\begin{proposition}
If $\gamma$ and $\delta$ are two representations in $RO(\Pi B)$, then
\[
 H\Mackey A^{\gamma}\smsh_{H\Mackey A} H\Mackey A^{\delta} \hmtpc H\Mackey A^{\gamma+\delta}.
\]
\end{proposition}

\begin{proof}
Equivalences of spectra over $B$ are the same as fiberwise equivalences, so it suffices
to check on the fibers over a point $b\in B$. But that is just the calculation
\[
 \susp^V H\Mackey A \smsh_{H\Mackey A} \susp^W H\Mackey A \hmtpc \susp^{V+W}H\Mackey A
\]
for virtual representations $V$ and $W$ of the isotropy subgroup of $b$.
\end{proof}

%%%%%%%%%%%%%%%%%%%%%%%%%%%%%%%%%%%%%%%%%%%%%%%%%%%%%%%%%%%%%%%%%%%%%%%%%%%%%%%%%%%%%%%%%%%%%%%%%%%%%%%%%%%%%%%%%%%%%%%%%%%%%
%%%%%%%%%%%%%%%%%%%%%%%%%%%%%%%%%%%%%%%%%%%%%%%%%%%%%%%%%%%%%%%%%%%%%%%%%%%%%%%%%%%%%%%%%%%%%%%%%%%%%%%%%%%%%%%%%%%%%%%%%%%%%
%%%%%%%%%%%%%%%%%%%%%%%%%%%%%%%%%%%%%%%%%%%%%%%%%%%%%%%%%%%%%%%%%%%%%%%%%%%%%%%%%%%%%%%%%%%%%%%%%%%%%%%%%%%%%%%%%%%%%%%%%%%%%
%%%%%%%%%%%%%%%%%%%%%%%%%%%%%%%%%%%%%%%%%%%%%%%%%%%%%%%%%%%%%%%%%%%%%%%%%%%%%%%%%%%%%%%%%%%%%%%%%%%%%%%%%%%%%%%%%%%%%%%%%%%%%
%%%%%%%%%%%%%%%%%%%%%%%%%%%%%%%%%%%%%%%%%%%%%%%%%%%%%%%%%%%%%%%%%%%%%%%%%%%%%%%%%%%%%%%%%%%%%%%%%%%%%%%%%%%%%%%%%%%%%%%%%%%%%
%%%%%%%%%%%%%%%%%%%%%%%%%%%%%%%%%%%%%%%%%%%%%%%%%%%%%%%%%%%%%%%%%%%%%%%%%%%%%%%%%%%%%%%%%%%%%%%%%%%%%%%%%%%%%%%%%%%%%%%%%%%%%
%%%%%%%%%%%%%%%%%%%%%%%%%%%%%%%%%%%%%%%%%%%%%%%%%%%%%%%%%%%%%%%%%%%%%%%%%%%%%%%%%%%%%%%%%%%%%%%%%%%%%%%%%%%%%%%%%%%%%%%%%%%%%

\section{Statement of the main theorem and proof of the base case}
\label{sec: Statement}
The goal of this section is to provide a precise formulation of the main theorem and prove it in the special cases $\Xp{p}$ and $\Xq{q}$, which will act as the basis of the induction in the proof of the general case. As it constitutes the model for cohomology of the finite cases, we begin by recalling the structure of the cohomology of the base space $B:=\PP(\C^\infty\dirsum\C^{\infty\sigma})$, which was obtained in \cite{Co:BGU1preprint}, as well as some notation.

Recall that we let $\omega$ denote the tautological complex line bundle over $B$ as well as its restrictions to each $\Xpq{p}{q}$.
As pointed out in section \ref{sub ROPi}, every equivariant bundle over $B$ gives an associated representation of $\Pi B$ and we will also write $\omega$ for the associated element of $RO(\Pi B)$. As we have already seen, the representation ring $RO(\Pi B)$ is isomorphic to $\Z^3$, 
%with the isomorphism mapping the representation $\R^a\oplus \R^{b\sigma}\oplus \omega^{\oplus c}$ to the triple $(a,b,c)$.
generated by $1 = \R$, $\sigma = \R^\sigma$, and $\omega$.

Of particular importance for the presentation of cohomology is the $C_2$-involution $\chi\colon B\to B$ which classifies the operation of tensoring with the sign representation: if $f\colon X\to B$ classifies a complex line bundle $\eta$, then $\chi f$ will classify $\eta\tensor\C^{\sigma}$, which we also denote by $\chi\eta$. We write $\cw = e(\omega)\in \Mackey H_{C_2}^\omega(B_+)$ for the Euler class of $\omega$ and $c_{\chiw} = e(\chiw)\in \Mackey H_{C_2}^{\chiw}(B_+)$ for the Euler class of $\chiw$.
Finally we will denote the dual bundle by $\omega\dual = \Hom(\omega,\C)$, and observe that $(\chiw)\dual = \chi(\omega\dual) = \Hom(\omega,\C^\sigma)$, so we can simply write $\chiw\dual$. Their respective Euler classes will be denoted $\cwd = e(\omega\dual)$ and $\cxwd = e(\chiw\dual)$.
For later use, note also that $\chi$ restricts to a $C_2$-homeomorphism $\chi\colon \Xpq pq \homeo \Xpq qp$ for each $p$ and $q$, as $C_2$-spaces but not as spaces over $B$.
%OldSean: I think it is important to make this point that they are not the same over B.

%
%\begin{itemize}
%\item
%Let $\cw \in \Mackey H_{C_2}^\omega(\Xpq{p}{q}_+))$ be
%the Euler class of the canonical complex line bundle $\omega$.
%It equals the restriction of the element of the same name
%in the cohomology of $\Xpq{\infty}{\infty})$.
%\item
%Let $\cxw \in \Mackey H_{C_2}^{\chiw}(\Xpq{p}{q}_+)$
%be the Euler class of $\omega\tensor_\C \C^\sigma$.
%It equals the restriction of the element of the same name
%in the cohomology of $\Xpq{\infty}{\infty})$.
%\item
%Let $\cwt \in \Mackey H_{C_2}^{\omega-2}(\Xpq{p}{q}_+)$
%be the restriction of the element of the same name in the cohomology
%of $\Xpq{\infty}{\infty}$.
%\item
%Let $\cxwt \in \Mackey H_{C_2}^{\chiw-2}(\Xpq{p}{q})_+)$
%be the restriction of the element of the same name in the cohomology
%of $\Xpq{\infty}{\infty}$.
%\end{itemize}
%}

The following is the main result of \cite{Co:BGU1preprint}.

\begin{theorem}
\label{thm: infinite case}
$\Mackey H_{C_2}^{RO(\Pi B)}(B_+)$ is an algebra over $\Mackey H_{C_2}^{RO(C_2)}(S^0)$ generated by the Euler classes $c_{\omega}$ and $c_{\chiw}$ together with classes $\cwt$ and $\cxwt$. 
These elements live in gradings
\begin{align*}
 \deg\cw &= \omega &  \deg\cxw &= \chiw = -\omega + 2 + 2\sigma \\
 \deg\cwt &= \omega - 2 & \deg\cxwt &= \chiw-2 = -\omega + 2\sigma.
\end{align*}
They satisfy the following relations:
\begin{align*}
	\cwt \cxw &= (1-\kappa)\cxwt \cw + e^2 \qquad\text{and} \\
	\cxwt \cwt &= \xi,
\end{align*}
and these relations completely determine the algebra.
Writing
\[
 \epsilon = e^{-2}\kappa \cxwt \cw \in \Mackey H_{C_2}^0(B_+),
\]
the units in $\Mackey H_{C_2}^0(B_+)$, which all square to $1$, are
\[
 \pm 1,\ \pm(1-\kappa),\ \pm(1-\epsilon), \text{ and } \pm(1-\kappa)(1-\epsilon) = \pm(1-\kappa+\epsilon).
\]
The Euler classes of the dual bundles are given by 
\begin{align*}
	\cwd &= -(1-\epsilon)c_{\omega} \\
\intertext{and}
	\cxwd &= -(1-\kappa)(1-\epsilon)c_{\chiw}.
\end{align*}
\qed
\end{theorem}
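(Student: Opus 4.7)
The plan is to compute $\Mackey H_G^{RO(\Pi B)}(B_+)$ by combining a structural analysis of $\Pi B$, an explicit construction of the four generators, and a cellular spectral sequence. First I would pin down the equivariant fundamental groupoid. The fixed set $B^G$ has two components, namely $\PP(\C^\infty)$ and $\PP(\MM^\infty)$, corresponding to lines on which $G$ acts trivially and lines on which $G$ acts by $-1$. Combined with the underlying-space orbit data, this produces $RO(\Pi B) \cong \Z^3$ with basis $\{1, \Lambda, \omega\}$; the identity $\chi\omega = -\omega + 2 + \MM$ is then a direct check at each fixed component, since $\omega$ restricts to a trivially-acted complex line on one component and to $\MM$ on the other, and $\chi$ swaps these behaviors.

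Next I would construct the four generators. The Euler classes $c_\omega = e(\omega)$ and $c_{\chi\omega} = e(\chi\omega)$ are immediate. For $c_{\omega-2}$ and $c_{\chi\omega-2}$, which live in gradings of virtual real dimension zero but with nontrivial $\omega$-component, I would use the parametrized Thom isomorphism of \cite{CostenobleWanerBook}: the Thom class of a (virtual) bundle of negative real rank is an honest class in the $RO(\Pi B)$-graded theory, and shifting by an appropriate representation places it in the desired grading. Equivalently, these can be built by Schubert-type pushforwards from the two fixed strata inside $B$.

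The heart of the argument is a cellular spectral sequence. The finite projective spaces $X_{p,q}$ form an exhaustive $G$-CW filtration of $B$ whose attaching data is controlled by $\omega$ and $\chi\omega$, and after applying the Thom isomorphism on each stratum the $E_2$-page is a free $\HS$-module one can read off directly. Tracking representation gradings in $\Z^3$ rules out every potential nonzero differential on dimensional grounds, so the spectral sequence collapses; passing to $B = \colim_{p,q} X_{p,q}$ is harmless since the filtration is exhaustive. This yields a free $\HS$-module with a monomial basis in the four generators, modulo the desired relations, and in particular confirms that the products $c_{\omega-2} c_{\chi\omega}$, $c_{\chi\omega-2} c_\omega$, and $c_{\chi\omega-2} c_{\omega-2}$ all land in $RO(G)$-graded bidegrees and hence equal specific elements of $\HS$.

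The main obstacle is pinning down the precise multiplicative relations, especially the $(1-\kappa)$ coefficient in the formula for $e^2$. I would verify this by restricting along the two natural transformations to ordinary cohomology: the forgetful map to $H^*(\C\PP^\infty;\Z)$ and the fixed-point map to $H^*(\PP(\C^\infty) \sqcup \PP(\MM^\infty);\Z)$. On each side one recovers classical identities among Euler classes of line bundles and their duals, and $\kappa$ bookkeeps the discrepancy between the two fixed components; joint faithfulness on the relevant classes then determines the equivariant relation uniquely. Finally, the description of the units of $\Mackey H_G^0(B_+)$ and the Euler classes of the dual bundles follow by direct manipulation in the now-determined algebra, with $\pm 1, \pm(1-\kappa), \pm(1-\epsilon), \pm(1-\kappa+\epsilon)$ arising as the only combinations of $1$, $\kappa$, and $\epsilon$ which square to $1$, and $e(\omega\dual), e(\chi\omega\dual)$ then read off from the action of these units on $c_\omega, c_{\chi\omega}$.
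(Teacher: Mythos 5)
The first thing to say is that the paper does not prove this theorem: it is imported verbatim from \cite{Co:BGU1preprint} (the text immediately preceding it reads ``We assume the following computation of the cohomology of $B$, also from \cite{Co:BGU1preprint}''), so there is no in-paper proof to compare your argument against. Judged on its own merits, your outline follows a reasonable strategy --- and one that closely parallels what the present paper actually does for the finite spaces $X_{p,q}$ (filtration, location of generators in $\Z^3$, push-forwards and restriction to fixed sets) --- but two of its central steps conceal genuine gaps rather than routine verifications.

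First, the assertion that ``tracking representation gradings in $\Z^3$ rules out every potential nonzero differential on dimensional grounds'' is precisely the hard part, not a formality. In the present paper the corresponding claim for the inclusions $X_{p-1,q}\subset X_{p,q}$ and $X_{p,q-1}\subset X_{p,q}$ is Lemma~\ref{lem:splitting}, and its proof needs to know in advance where every module generator sits (the sets $D_{p,q}$ of Theorem~\ref{thm: additive description}), the boundedness estimates of Proposition~\ref{prop: boundedness} and Lemma~\ref{lem: gij boundedness}, and the exact vanishing regions of $\HS$; without an analogue of that bookkeeping the collapse claim is unsupported. Second, for the relations: landing in an $RO(G)$-graded degree does not make a product a scalar. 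The graded piece $\Mackey H_G^{\MM}(B_+)$ of the free $\HS$-module receives contributions from more than one basis element (the generator $1$ contributes via $e^2$, and $c_{\chi\omega-2}c_{\omega}$ is itself a basis element in grading $\MM$), so $c_{\omega-2}c_{\chi\omega}$ is a priori an unknown $\HS$-linear combination of several generators, not an element of $\HS$. To then pin down the coefficient $(1-\kappa)$ by mapping to the underlying space and to the fixed points, you must prove that these two maps are jointly injective on the relevant graded Mackey functors; you assert this ``joint faithfulness'' without justification, and it is exactly where classes divisible by $\kappa$ (which die under the underlying-space map) could hide. Until these two points are filled in, the proposal is an outline of a plausible computation rather than a proof.
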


Here, $\kappa,e$ and $\xi$ are elements of $\Mackey H^{RO(C_2)}_{C_2}(S^0)$ mentioned in \S\,\ref{subsec:cohompoint}.

The classes $\cwt$ and $\cxwt$ are characterized by the fact that they map to Thom classes of $\omega-2$ and $\chi\omega-2$, respectively, in
the cohomology of $B\times EC_2$, where $\omega = 2 = \chi\omega \in RO(\Pi (B\times EC_2))$.
They actually come from the cohomology of $B\Pi B$, a nonequivariantly contractible $C_2$-space whose fixed set consists of
two contractible components.
The restrictions of $\zeta_1$ and $\zeta_0$ to nonequivariant cohomology are invertible,
%OldSean: Is here also a good place to say what their fixed points are?
%Steve: How about the following?
while the fixed points are given by $\cwt^{C_2} = (1,0)$ and $\cxwt^{C_2} = (0,1)$; see Lemma~\ref{lem:restrictions}.

One of the key differences in dealing with the finite case is that the elements $\cw$, $\cxw$, $\cwt$ and $\cxwt$, which are inherited from $B$ via the pullback along the obvious inclusion, are not sufficient to generate the cohomology
of $\Xpq pq$. 
We can illustrate this with the special cases of $\Xp{p}$ and $\Xq{q}$.
These spaces are both 
copies of complex projective spaces with trivial $C_2$-action and map to $B^{C_2}$. However they do differ as $C_2$-spaces over $B$, as they map to different components of the fixed points.

\begin{theorem}
\label{thm:fixedcasesMultiplicative}
\begin{enumerate}\item[]
\item
For $0<p<\infty$ we have %and $q=0$, we have
\[
 \Mackey H_{C_2}^{RO(\Pi B)}(\Xp{p}_+) \iso 
   \Mackey H_{C_2}^{RO(C_2)}(S^0)[c_{\omega}, \cwt, \cwt^{-1}]/\langle c_{\omega}^p \rangle.
\]

\vspace{0.2 cm}

\item
For  $0<q<\infty$ we have
\[
 \Mackey H_{C_2}^{RO(\Pi B)}(\Xq{q}_+) \iso 
   \Mackey H_{C_2}^{RO(C_2)}(S^0)[c_{\chiw}, \cxwt, \cxwt^{-1}]/\langle c_{\chiw}^q \rangle.
\]
\end{enumerate}
\end{theorem}

\begin{proof}
As the proofs of the two parts are essentially identical, we will consider only (1). 
Recall that the nonequivariant cohomology of $\Xp{p}$ with $\Z$ coefficients is
\[
 H^{\Z}(\Xp{p}_+) = \Z[c]/\langle c^p \rangle,
\]
where $c$ is the (nonequivariant) Euler class of $\omega$, so $\deg c = 2$.
Because the nonequivariant cohomology is free over $\Z$, \cite[Proposition 6.2]{Co:BGU1preprint} implies that
\[
 \Mackey H_{C_2}^{RO(C_2)}(\Xp{p}_+) \iso H^{\Z}(\Xp{p}_+) \tensor \Mackey H_{C_2}^{RO(C_2)}(S^0)
   \iso \Mackey H_{C_2}^{RO(C_2)}(S^0)[c]/\langle c^p \rangle.
\]
Because $RO(\Pi \Xp{p}) = RO(C_2)$, the above computation extends to $RO(\Pi B)$-grading by the adjunction of an invertible element in grading $ \omega-2$, which generates the kernel of the map $RO(\Pi B) \to RO(\Pi \Xp{p})$. 
The element $\cwt$, when restricted to $\Xp p$, is such an element (as shown in \cite{Co:BGU1preprint}), so we get
\[
 \Mackey H_{C_2}^{RO(\Pi B)}(\Xp{p}_+) 
   \iso \Mackey H_{C_2}^{RO(C_2)}(S^0)[c,\cwt,\cwt^{-1}]/\langle c^p \rangle.
\]
From \cite{Co:BGU1preprint} we also know that
$c_{\omega} = \cwt c$, so we may replace $c$ with $c_{\omega}$ as a polynomial
generator, with $c_{\omega}^p = 0$ still.
\end{proof}

Although they do not appear in the presentation,  the elements $c_{\chiw}$ and $\cxwt$ do exist in the cohomology of $\Xp{p}$ and the elements  $c_{\omega}$ and $\cwt$ exist in the cohomology of $\Xq{q}$. 
We identify these elements in the following result.

\begin{proposition}
\label{prop:fixedcaseschi}
In $\Mackey H_{C_2}^{RO(\Pi B)}(\Xp{p}_+)$, we have
\begin{align*}
 c_{\chiw} &= e^2\cwt^{-1} + \xi \cwt^{-2} c_{\omega} \qquad\text{and} \\
 \cxwt &= \xi \cwt^{-1}.
\end{align*}
In $\Mackey H_{C_2}^{RO(\Pi B)}(\Xq{q}_+)$, we have
\begin{align*}
 c_{\omega} &= e^2\cxwt^{-1} + \xi \cxwt^{-2}c_{\chiw} \qquad\text{and} \\
 \cwt &= \xi \cxwt^{-1}.
\end{align*}
\end{proposition}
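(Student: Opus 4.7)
The plan is to derive all four identities purely algebraically from the two relations of Theorem~\ref{thm: infinite case}, which pull back to $\Mackey H_G^{RO(\Pi B)}((X_{p,0})_+)$ and $\Mackey H_G^{RO(\Pi B)}((X_{0,q})_+)$ via the ring maps induced by the inclusions $X_{p,0}\hookrightarrow B$ and $X_{0,q}\hookrightarrow B$. The crucial extra input is the invertibility of $c_{\omega-2}$ in $\Mackey H_G^{RO(\Pi B)}((X_{p,0})_+)$ and of $c_{\chi\omega-2}$ in $\Mackey H_G^{RO(\Pi B)}((X_{0,q})_+)$, both provided by Theorem~\ref{thm:fixedcases}.

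The formulas $c_{\chi\omega-2}=\xi c_{\omega-2}^{-1}$ in $X_{p,0}$ and $c_{\omega-2}=\xi c_{\chi\omega-2}^{-1}$ in $X_{0,q}$ drop out immediately from the relation $c_{\chi\omega-2}c_{\omega-2}=\xi$ and the corresponding invertibility statement. Substituting these into the other relation $c_{\omega-2}c_{\chi\omega}-(1-\kappa)c_{\chi\omega-2}c_\omega=e^2$, I would solve for the remaining generator. In $X_{p,0}$ a single division by $c_{\omega-2}$ produces
\[
 c_{\chi\omega} = e^2 c_{\omega-2}^{-1} + (1-\kappa)\xi c_{\omega-2}^{-2}c_\omega.
\]
In $X_{0,q}$ the analogous step requires also inverting $(1-\kappa)$, which is legitimate since $(1-\kappa)^2=1$ by the unit list in Theorem~\ref{thm: infinite case}; multiplying through by $(1-\kappa)$ and then dividing by $c_{\chi\omega-2}$ yields
\[
 c_\omega = (1-\kappa)\xi c_{\chi\omega-2}^{-2}c_{\chi\omega} - (1-\kappa)e^2 c_{\chi\omega-2}^{-1}.
\]

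Matching these derived expressions with the stated formulas requires comparing coefficients in the free $\HS$-module decompositions of Theorem~\ref{thm:fixedcases}. For $X_{p,0}$ the only discrepancy is the factor $(1-\kappa)$ in front of $\xi c_{\omega-2}^{-2}c_\omega$, and this collapses exactly when $\kappa\xi=0$ in $\HS$. For $X_{0,q}$ one needs, in addition to $\kappa\xi=0$, the identity $(2-\kappa)e^2=0$ (equivalently $\kappa e^2=2e^2$) in $\HS$ in order to turn the coefficient $-(1-\kappa)e^2$ into $e^2$.

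The main obstacle is therefore to verify the two identities $\kappa\xi=0$ and $\kappa e^2=2e^2$ in $\HS=\Mackey H_G^{RO(G)}(S^0)$. Both are statements purely internal to the equivariant cohomology of a point for $G=\Z/2$ and should be read off from the explicit description of $\HS$ in \cite{Co:BGU1preprint} (and are consistent with the already-recorded identity $(1-\kappa)^2=1$, i.e.\ $\kappa(\kappa-2)=0$). Once these identities are established, the four formulas of the proposition follow directly from the algebraic manipulations sketched above.
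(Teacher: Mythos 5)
Your argument is correct and is essentially the paper's own proof: restrict the two relations from $B$, use the invertibility of $c_{\omega-2}$ (resp.\ $c_{\chi\omega-2}$) from Theorem~\ref{thm:fixedcases} to solve for $c_{\chi\omega-2}$, $c_{\chi\omega}$ (resp.\ $c_{\omega-2}$, $c_{\omega}$), and simplify coefficients using $\kappa\xi=0$. The additional identity $\kappa e^2=2e^2$ that you isolate for the $X_{0,q}$ case is indeed a true relation in $\HS$ (the paper itself recalls $e^n\kappa=2e^n$ for $n>0$ in Section~\ref{subsec:other coefficients}), so your write-up merely makes explicit a step the paper glosses over as ``similar.''
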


\begin{proof}
These follow on restricting the relations that hold in
$\Mackey H_{C_2}^{RO(\Pi B)}(B_+)$.
The identifications of $\cxwt$ and $\cwt$ are straightforward.
For the identification of $c_{\chiw}$ in the cohomology of $\Xp{p}$, we start with
\[
 \cwt c_{\chiw} = e^2 + (1-\kappa) \cxwt c_{\omega}.
\]
From this we get
\begin{align*}
 c_{\chiw} &= e^2\cwt^{-1} + \cwt^{-1}(1-\kappa)\xi \cwt^{-1} c_{\omega} \\
  &= e^2\cwt^{-1} + \xi \cwt^{-2} c_{\omega},
\end{align*}
where we use that $\kappa\xi = 0$, so $(1-\kappa)\xi = \xi$.
The identification of $c_{\omega}$ in the cohomology of $\Xq{q}$ is similar.
\end{proof}

The invertibility of the elements $\zeta_0$ and $\zeta_1$ in Theorem~\ref{thm:fixedcasesMultiplicative}
will, on pushing forward, imply the the divisibility of fundamental classes of the form $[\Xp{p'}]^*$ and $[\Xq{q'}]^*$
in the cohomology of $\Xpq{p}{q}$.
% One way to think about the elements $\zeta_0^{-1}$ and $\zeta_1^{-1}$ is that they are needed to account for the presence of the fundamental cohomology classes
% of smaller projective spaces of the form $\Xp{n}$ and $\Xq{n}$.
To explain that, we now take a closer look at these fundamental classes.
For future use, we begin with a more general case.
Though we have not yet stated the full structure of the cohomology groups in question, we do know
that they contain the elements $\cw$, $\cxw$, $\cwt$, and $\cxwt$ (inherited from the infinite case), which is all that we use.

\begin{lemma}
\label{lem:pushforwardnormal}
Consider the embedding $i\colon \Xpq{p'}{q'} \to \Xpq{p}{q}$ with $0<p' \leq p$ and $0<q' \leq q$.
Then the normal bundle to $i$ is $\nu=(p-p')\omega\dual \dirsum (q-q')\chiw\dual$ and we have
\begin{align*}
 \bigl[\Xpq{p'}{q'}\bigr]^*_\nu &=(1-\kappa)^{q-q'} [-(1-\epsilon)]^{p-p' + q-q'} \cw^{p-p'} \cxw^{q-q'} \\
 &=\cwd^{\;p-p'} \cxwd^{\;q-q'}\in \Mackey H_{C_2}^\nu(\Xpq{p}{q}_+).
\end{align*}
\end{lemma}

\begin{proof}
This is a straightforward application of Proposition \ref{Prop fund}. Specifically, the identification of the normal bundle is a standard observation nonequivariantly, which extends to the equivariant case:
The normal bundle of $i$ is the restriction of
\[
 \nu = \Hom(\omega, \Cpq{p-p'}{(q-q')}),
\]
thinking of $\Cpq{p-p'}{(q-q')}$ as the summands added in passing
from $\Xpq{p'}{q'}$ to $\Xpq{p}{q}$. We obtain the necessary section by considering the composition of the inclusion of the total space of $\omega$ into the trivial bundle $\Xpq{p}{q}\times (\Cpq{p}{q})$ with the projection to the quotient $\Cpq{p}{q}/\Cpq{p'}{q'}\cong\Cpq{p-p'}{(q-q')}$.

The computation of the Euler class of
$$\nu\cong \omega^\vee\otimes \Cpq{p-p'}{(q-q')}\cong (\omega^\vee)^{\oplus (p-p')}\oplus(\chiw^\vee)^{\oplus (q-q')}$$
follows from the multiplicativity of the Euler class on direct sums and the formulas given in Theorem \ref{thm: infinite case}.  
%The argument for $j$ is similar, noting that its normal bundle is $\Hom(\omega,\C)$.
\end{proof}

We now look at projective spaces arising from a single representation, where there is a nontrivial choice involved in determining the grading.

\begin{lemma}
\label{lem:pushforwardfixed}
\begin{enumerate}\item[]
\item
Let $0<p'\leq p$. Consider the embeddings $i\colon \Xp{p'} \to \Xpq{p}{q}$ and 
$j\colon \Xp{p'} \to B$.
Fix an $\eta\in \ker j^*$, where $j^*$ is the restriction map on the representation rings of the fundamental groupoids,
so we can write $\eta = m(\omega-2)$ for some $m\in\Z$.
Then the normal bundle to $i$ is the restriction of $\nu=(p-p')\,\omega\dual+q\chi\omega\dual$ and, in the cohomology of $\Xpq{p}{q}$, we have
\begin{align*}
  \bigl[\Xp{p'}\bigr]_{\nu+\eta}^*&= [-(1-\epsilon)]^{p-p'}\cdot[-(1-\kappa)(1-\epsilon)]^q\cwt^m\cw^{p-p'}\cxw^q\\
  &=\cwt^m\cwd^{\;p-p'}\cxwd^{\;q}
\end{align*}
if $m \geq 0$
and
\begin{align*}
  \cwt^{|m|} \cdot\bigl[\Xp{p'}\bigr]_{\nu+\eta}^* &= [-(1-\epsilon)]^{p-p'}\cdot[-(1-\kappa)(1-\epsilon)]^q\cw^{p-p'}\cxw^q\\
  &=\cwd^{\;p-p'}\cxwd^{\;q}
\end{align*}
if $m < 0$.

\vspace{0.1 cm}

\item
Let $0<q'\leq q$. Consider the embeddings $i\colon \Xq{q'} \to \Xpq{p}{q}$ and 
$j\colon \Xq{q'} \to B$.
Fix an $\eta\in \ker j^*$, where $j^*$ is the restriction map on the representation rings of the fundamental groupoids,
so we can write $\eta = m(\chi\omega-2)$ for some $m\in\Z$.
Then the normal bundle to $i$ is the restriction of $\nu=p\omega\dual+(q-q')\chi\omega\dual$ and, in the cohomology of $\Xpq{p}{q}$, we have
\begin{align*}
  \bigl[\Xq{q'}\bigr]_{\nu+\eta}^*&= [-(1-\epsilon)]^{p}\cdot[-(1-\kappa)(1-\epsilon)]^{q-q'}\cxwt^m\cw^{p}\cxw^{q-q'}\\
  &=\cxwt^m\cwd^{\;p}\,\cxwd^{\;q-q'}
\end{align*}
if $m \geq 0$
and
\begin{align*}
  \cxwt^{|m|} \cdot\bigl[\Xq{q'}\bigr]_{\nu+\eta}^* &= [-(1-\epsilon)]^{p}\cdot[-(1-\kappa)(1-\epsilon)]^{q-q'}\cw^{p}\cxw^{q-q'}\\
  &=\cwd^{\;p}\,\cxwd^{\;q-q'}
\end{align*}
if $m < 0$.
\end{enumerate}
\end{lemma}

\begin{proof}
The proof is essentially the same as that of Lemma~\ref{lem:pushforwardnormal}, except for the presence of the factors of
$\cwt$ or $\cxwt$.
In part (1), for example, we are to take the push-forward $i_!(1)$. The element $1$ has grading $0$
when we grade on $RO(\Pi \Xp{p'})$, but we need to expand the grading to $RO(\Pi B)$ and take a corresponding element in grading $\eta$.
As in Theorem~\ref{thm:fixedcasesMultiplicative}, we expand the grading by introducing the invertible element $\cwt$ and the element in grading $\eta$ we take is then $\cwt^m$.
The result now follows by computing $i_!(\cwt^m) = i_!i^*(\cwt^m)$ when $m\geq 0$ or by writing $\cwt^{|m|}i_!(\cwt^m) = i_!(1)$ when $m<0$.
\end{proof}

We include one more calculation of a push-forward, which we will need later for an application.
In the following, we write $\zeta = \rho(\zeta_1) \in \Mackey H_{C_2}^{\omega-2}(\Xpq pq_+)(C_2/e)$ and we let
$c\in \Mackey H_{C_2}^2(\Xpq pq_+)(C_2/e) \iso H^2(\Xp{p+q}_+;\Z)$ be the nonequivariant first Chern class
of the dual of the tautological bundle. We then have that $\zeta$ is invertible (since $\zeta\cdot\rho(\zeta_0) = \iota^2$),
$\rho(\cwd) = \zeta c$, and $\rho(\cxwd) = \iota^{2}\zeta^{-1} c$.

\begin{lemma}
\label{lem:pushforwardfree}
Let $0 < r \leq p+q$, consider a nonequivariant embedding $\Xp r \to \Xpq pq$ induced by a nonequivariant linear inclusion
$\C^r\includesin\C^{p+q\sigma}$, and extend to a $C_2$-map
$k\colon C_2\times\Xp r \to \Xpq pq$. Fix an $\eta\in\ker k^*$ where $k^*\colon RO(\Pi\Xpq pq) \to RO(\Pi(C_2\times\Xp r)) \iso \Z$,
so $\eta = n(\omega-2) + m(2\sigma-2)$ for some $n, m\in\Z$. Then
\[
 [C_2\times\Xp r]^*_{\eta+2(p+q-r)} 
    = \tau(\iota^{m}\zeta^n c^{p+q-r})
\]
where $[C_2\times\Xp r]^*$ is defined to be $k_!(\tau(1)) = k_!([C_2])$.
\end{lemma}

\begin{proof}
If we write $k = C_2\times k'$, where $k'\colon \Xp r\to \Xpq pq$ is the nonequivariant embedding, then
$k_!(\tau(1)) = \tau(k'_!(1))$. The normal bundle to the nonequivariant embedding $k'$ is $(p+q-r)\omega\dual$,
where we use $\omega$ to also denote the nonequivariant tautological bundle, so $k'_!(1) = c^{p+q-r}$.
However, as in the preceding lemma, we need to adjust the grading when we look at the expanded grading $RO(\Pi\Xpq pq)$,
and the corresponding element in  grading $\eta+2(p+q-r)$ is $\iota^m\zeta^n c^{p+q-r}$.
The lemma follows.
\end{proof}

With these calculations in hand we can write an additive basis of the cohomology of $\Xp p$ or $\Xq q$ as follows.

%OldSean: This corollary needs to be fixed. For me the whole body shows up on the next page.
%Steve: Not elegant, but it works. The journal might not like it.
\pagebreak
\begin{corollary}\label{cor:fixedcasesBases}
\begin{enumerate}\item[]
\item
Let $0<p<\infty$.
Additively, $\Mackey H_{C_2}^{RO(\Pi B)}(\Xp{p}_+)$ is a free module over $\Mackey H_{C_2}^{RO(C_2)}(S^0)$ where, for each $m\in\Z$,
 the submodule $\Mackey H_{C_2}^{m\omega+RO(C_2)}(\Xp{p}_+)$  has a basis given by the set
\[
 \Big\{ \big[\Xp{p}\big]_\eta^*, \big[\Xp{p-1}\big]_{\eta+2}^*, \ldots, \big[\Xp{}\big]_{\eta+2(p-1)}^* \Big\}
\]
with $\eta=m(\omega-2)$.

\vspace{0.1 cm}

\item
Let  $0<q<\infty$.
Additively, $\Mackey H_{C_2}^{RO(\Pi B)}(\Xq{q}_+)$ is a free module over $\Mackey H_{C_2}^{RO(C_2)}(S^0)$ where, for each $m\in\Z$,
 the submodule $\Mackey H_{C_2}^{m\omega+RO(C_2)}(\Xq{q}_+)$ has a basis given by the set
\[
 \Big\{\big[\Xq{q}\big]_\eta^*, \big[\Xq{(q-1)}\big]_{\eta+2}^*, \ldots, \big[\Xq{}\big]_{\eta+2(q-1)}^* \Big\}
\]
with  $\eta=-m(\chiw-2) = m(\omega-2\sigma)$.
\end{enumerate}
\end{corollary}

\begin{proof}
For part (1), Lemma~\ref{lem:pushforwardfixed} gives us that
\[
    \big[\Xp{p-j}\big]_{\eta+2j}^* = [-(1-\epsilon)]^j \cwt^{m-j} \cw^j,
\]
because $\eta+2j = j\omega + (m-j)(\omega-2)$.
Note that we do not need to distinguish the cases $m\geq 0$ and $m<0$ because $\cwt$ is invertible
in $\Mackey H_{C_2}^{RO(\Pi B)}(\Xp{p}_+)$.
From Theorem~\ref{thm:fixedcasesMultiplicative}, we can see that a basis for $\Mackey H_{C_2}^{m\omega+RO(C_2)}(\Xp{p}_+)$
is given by
\[
    \{ \cwt^m, \cwt^{m-1}\cw, \ldots, \cwt^{m-p+1}\cw^{p-1} \}.
\]
Part (1) follows and part (2) is similar.
\end{proof}

One consequence of Theorem~\ref{thm:fixedcasesMultiplicative} and Corollary~\ref{cor:fixedcasesBases} is that 
the fundamental cohomology classes $\big[\Xp{m}\big]_\eta^*$ and  $\big[\Xq{m}\big]_\eta^*$ are infinitely divisible
by $\zeta_1$ and $\zeta_0$, respectively.
%the existence of the cohomology fundamental classes $\big[\Xp{m}\big]_\eta^*$ and  $\big[\Xq{m}\big]_\eta^*$%as $\eta$ varies
%forces the elements $\cwt$ and $\cxwt$ to become invertible. 
This phenomenon propagates itself into the cohomology of $\Xpq{p}{q}$ with $p$ and $q$ both strictly positive, even though it will no longer be true that $\cwt$ and $\cxwt$ are actually invertible. %It is worth pointing out that the existence of the fundamental classes a major difference 
In the general case we introduce the following elements, as suggested by Lemma~\ref{lem:pushforwardfixed}.

\begin{definition}\label{def:elements}  
Let $m$, $p$, $q\in\NN$ such that $m>0$ and $p+q>0$. We define the following elements in $\Mackey H_{C_2}^{RO(\Pi B)}(\Xpq{p}{q}_+)$:
\[
 \cwt^{-m}\cxw^q := [-(1-\kappa)(1-\epsilon)]^q\bigl[\Xp{p}\bigr]_{q\chiw - m(\omega-2)}^*
\]
and
\[
 \cxwt^{-m}\cw^p := [-(1-\epsilon)]^p\bigl[\Xq{q}\bigr]_{p\omega - m(\chiw-2)}^* \, .
\]
\end{definition}

%To justify the definitions of $\cxwt^{-k}\cw^p$ and
%$\cwt^{-k}\cxw^q$ we insert the following.
%\begin{corollary}
%In the cohomology of $\Xpq{p}{q}$ we have
%\[
% \cxwt^k \cdot (\cxwt^{-k}\cw^p) = \cw^p
%\]
%\[
% \cxwt^k \cdot \Big[\Xq{q}\Big]_\nu^*= %\Big[-(1-\epsilon)\Big]^p\cw^p
%\]
%\[
% \cxwt^k \cdot \Big[\Xq{q'}\Big]_\nu^*= \Big[-(1-\epsilon)\Big]^p\cdot\Big[-(1-\kappa)(1-\epsilon)\Big]^{q-q'}\cw^p \cw^{q-q'} 
%\]
%and
%\[
% \cwt^k \cdot (\cwt^{-k}\cxw^q) = \cxw^q.
%\]
%\[
% \cwt^k \cdot \Big[\Xp{p}\Big]_\nu^* = \Big[-(1-\kappa)(1-\epsilon)\Big]^q\cxw^q.
%\]
%\[
% \cwt^k \cdot \Big[\Xp{p'}\Big]_\nu^* = \Big[-(1-\epsilon)\Big]^{p-p'}\cdot\Big[-(1-\kappa)(1-\epsilon)\Big]^q\cw^p\cxw^q.
%\]
%\end{corollary}

%\begin{proof}
%We have
%\begin{align*}
% \cxwt^k \cdot (\cxwt^{-k}\cw^p)
%  &= [-(1-\kappa)(1-\epsilon)]^p \cxwt^k \cdot j_!(\cxwt^{-k}) \\
%  &= [-(1-\kappa)(1-\epsilon)]^p j_! j^*(1) \\
%  &= \cw^p
%\end{align*}
%and similarly for the other equality claimed.
%\end{proof}

We are finally in the position to state our main theorem. As we have been doing, we will use the same names for the restrictions to $\Xpq{p}{q}$ of $\omega$ and $\chiw$ and their Euler classes.

\begin{thm2}
\label{thm:finitespaces}
Let $0 \leq p < \infty$ and $0 \leq q < \infty$ with $p+q > 0$.
As a module over $\Mackey H_{C_2}^{RO(C_2)}(S^0)$, $\Mackey H_{C_2}^{RO(\Pi B)}(\Xpq{p}{q}_+)$ is free.
As a (graded) commutative algebra over $\Mackey H_{C_2}^{RO(C_2)}(S^0)$, 
$\Mackey H_{C_2}^{RO(\Pi B)}(\Xpq{p}{q}_+)$ is generated by $c_{\omega}$, $c_{\chiw}$, $\cwt$, and $\cxwt$,
together with the following classes:
$c_{\omega}^p$ is infinitely divisible by $\cxwt$, meaning that, for $k\geq 1$,
there are unique elements $\cxwt^{-k}c_{\omega}^p$ such that
\[
 \cxwt^k \cdot \cxwt^{-k} c_{\omega}^p = c_{\omega}^p.
\]
Similarly, $c_{\chiw}^q$ is infinitely divisible by $\cwt$, meaning that, for $k\geq 1$,
there are unique elements $\cwt^{-k}c_{\chiw}^q$ such that
\[
 \cwt^k \cdot \cwt^{-k} c_{\chiw}^q = c_{\chiw}^q.
\]
The generators satisfy the following relations:
\begin{align}
	c_{\omega}^p c_{\chiw}^q &= 0, \\
	\cwt c_{\chiw} &= (1-\kappa)\cxwt c_{\omega} + e^2 \qquad\text{and} \label{rel:tensor product relation}\\
	\cxwt \cwt &= \xi.
\end{align}
\end{thm2}

The reader should compare this to the classical computation of the cohomology of $\Xp{n}$.
Both $\cw$ and $\cxw$ are sent to $c_1(\omega)\in H^2(\C\PP^{p+q-1};\Z)$ under the forgetful map, and
 the relation $c_{\omega}^p c_{\chiw}^q = 0$ is the appropriate analog of $c_1(\omega)^{n+1}=0$ in the cohomology of $\C\PP^n$. 
On the other hand, both $\cwt$ and $\cxwt$ are sent to a unit in $H^0(\C\PP^{p+q-1};\Z)$.

For a more detailed description of the algebraic structure %of $\Mackey H_{C_2}^{RO(\Pi B)}(\Xpq{p}{q}_+)$ 
we refer the reader to Propositions \ref{prop: cancellation} and \ref{prop:relations}.
%There are other properties that the cohomology of $\Xpq{p}{q}$ has.
%The classes $\cwt$ and $\cxwt$ can be cancelled in a range of degrees, summarized in Proposition \ref{prop: cancellation}.
%The proof of this result requires a computation of the push-forwards.
%There are also further relations that are implied by this cancellation property, detailed in Proposition \ref{prop:relations}.
%These two results are postponed until Section \ref{subsec: mult proof}.

%$B^{\Z/2}$ is just $B_0\Disjunion B_1$, where each is a copy of $\C\PP^{\infty}$.
%$\Xp{p}$ maps to the component $B_0$ of $B$ while $\Xq{q}$ maps to $B_1$.
%{\color{red} Parts (2) and (4) of the following theorem appear in \cite{Co:BGU1preprint} as part of the calculation of the cohomology of $B$. }

Theorem~A will be proved by induction on $p$ and $q$.
We have essentially already proved the base case, which is when one of $p$ or $q$ is 0. 

\begin{lemma}\label{lem:basis}
Theorem~A is true if $p=0$ or $q=0$.
\end{lemma}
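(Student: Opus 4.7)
The plan is to specialize each clause of Theorem A to the cases $q=0$ or $p=0$ and verify them directly from the explicit description in Theorem~\ref{thm:fixedcases} combined with Proposition~\ref{prop:fixedcaseschi}. By the symmetry interchanging $\omega$ with $\chi\omega$, $c_{\omega-2}$ with $c_{\chi\omega-2}$, and $p$ with $q$ (induced by the involution $\chi$ on $B$), it suffices to treat the case $q=0$ with $1\leq p<\infty$; the case $p=0$ is entirely parallel using parts (3)--(4) of Theorem~\ref{thm:fixedcases}.

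First I will check the three relations. The first collapses to $c_\omega^p\cdot 1 = c_\omega^p$, which vanishes by Theorem~\ref{thm:fixedcases}(1). For the remaining two, I will substitute the formulas $c_{\chi\omega} = e^2c_{\omega-2}^{-1} + \xi c_{\omega-2}^{-2}c_\omega$ and $c_{\chi\omega-2} = \xi c_{\omega-2}^{-1}$ from Proposition~\ref{prop:fixedcaseschi} and simplify, using the identity $(1-\kappa)\xi = \xi$ (equivalently $\kappa\xi=0$) to cancel cross terms; both verifications reduce to a few lines.

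Next I will handle the infinite divisibility. For $c_{\chi\omega}^q = c_{\chi\omega}^0 = 1$ by $c_{\omega-2}$, the powers $c_{\omega-2}^{-k}$ are the unique inverses of $c_{\omega-2}^k$, available because $c_{\omega-2}$ is a unit by Theorem~\ref{thm:fixedcases}(1). For $c_\omega^p = 0$ by $c_{\chi\omega-2}$, the element $0$ is an obvious lift; uniqueness reduces, via $c_{\chi\omega-2} = \xi c_{\omega-2}^{-1}$ and the invertibility of $c_{\omega-2}$, to the injectivity of multiplication by $\xi^k$ on the relevant $RO(G)$-graded piece of $\HS$, which can be read off its explicit structure. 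Finally, generation over $\Mackey H_G^{RO(G)}(S^0)$ is automatic: $c_\omega$ together with the divisibility elements $c_{\omega-2}^{-k}$ already recovers the entire polynomial presentation of Theorem~\ref{thm:fixedcases}(1), so adjoining $c_{\chi\omega}$ and $c_{\chi\omega-2}$ changes nothing.

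The only genuine obstacle is the uniqueness clause of the second infinite divisibility statement, since the target element $c_\omega^p=0$ does not by itself force uniqueness; this is the one step that needs actual input from the module structure of $\HS$ rather than formal manipulation. Everything else is bookkeeping against Theorem~\ref{thm:fixedcases} and Proposition~\ref{prop:fixedcaseschi}.
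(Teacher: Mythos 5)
Your plan is correct and is essentially the paper's own argument: the paper likewise treats $q=0$ (with $p=0$ ``similar''), citing Theorem~\ref{thm:fixedcases} and Proposition~\ref{prop:fixedcaseschi} and noting that the relations of Theorem A then follow by direct substitution using $(1-\kappa)\xi=\xi$. The uniqueness step you single out does go through exactly as you anticipate: the whole group $\Mackey H_G^{p\omega-k(\chi\omega-2)}((X_{p,0})_+)$ vanishes, since writing a class there against the basis $c_{\omega-2}^{p+k-i}c_{\omega}^{i}$, $0\leq i\leq p-1$, of $\Mackey H_G^{(p+k)\omega+RO(G)}((X_{p,0})_+)$ would require coefficients in gradings $2(p+k-i)-k\MM$ with $2(p+k-i)>2k$, which lie outside the nonvanishing range of $\HS$ recalled in Remark~\ref{rem: cohomology of spheres}, so $0$ is the only element of that grading and uniqueness is trivial.
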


\begin{proof}
We discuss the case $q=0$, the case $p=0$ being similar.
Theorem~\ref{thm:fixedcasesMultiplicative} showed that
\[
 \Mackey H_{C_2}^{RO(\Pi B)}(\Xp{p}_+) \iso 
   \Mackey H_{C_2}^{RO(C_2)}(S^0)[c_{\omega}, \cwt, \cwt^{-1}]/\langle c_{\omega}^p \rangle,
\]
and Proposition~\ref{prop:fixedcaseschi} showed that
\begin{align*}
 c_{\chiw} &= e^2\cwt^{-1} + \xi \cwt^{-2} c_{\omega} \qquad\text{and} \\
  \cxwt &= \xi \cwt^{-1}.
\end{align*}
The relations listed in Theorem A can now be verified.
\end{proof}

%%%%%%%%%%%%%%%%%%%%%%%%%%%%%%%%%%%%%%%%%%%%%%%%%%%%%%%%%%%%%%%%%%%%%%%%%%%%%%%%%%%%%%%%%%%%%%%%%%%%%%%%%%%%%%%%%%%%%%%%%%%%%
%%%%%%%%%%%%%%%%%%%%%%%%%%%%%%%%%%%%%%%%%%%%%%%%%%%%%%%%%%%%%%%%%%%%%%%%%%%%%%%%%%%%%%%%%%%%%%%%%%%%%%%%%%%%%%%%%%%%%%%%%%%%%
%%%%%%%%%%%%%%%%%%%%%%%%%%%%%%%%%%%%%%%%%%%%%%%%%%%%%%%%%%%%%%%%%%%%%%%%%%%%%%%%%%%%%%%%%%%%%%%%%%%%%%%%%%%%%%%%%%%%%%%%%%%%%
%%%%%%%%%%%%%%%%%%%%%%%%%%%%%%%%%%%%%%%%%%%%%%%%%%%%%%%%%%%%%%%%%%%%%%%%%%%%%%%%%%%%%%%%%%%%%%%%%%%%%%%%%%%%%%%%%%%%%%%%%%%%%
%%%%%%%%%%%%%%%%%%%%%%%%%%%%%%%%%%%%%%%%%%%%%%%%%%%%%%%%%%%%%%%%%%%%%%%%%%%%%%%%%%%%%%%%%%%%%%%%%%%%%%%%%%%%%%%%%%%%%%%%%%%%%
%%%%%%%%%%%%%%%%%%%%%%%%%%%%%%%%%%%%%%%%%%%%%%%%%%%%%%%%%%%%%%%%%%%%%%%%%%%%%%%%%%%%%%%%%%%%%%%%%%%%%%%%%%%%%%%%%%%%%%%%%%%%%
%%%%%%%%%%%%%%%%%%%%%%%%%%%%%%%%%%%%%%%%%%%%%%%%%%%%%%%%%%%%%%%%%%%%%%%%%%%%%%%%%%%%%%%%%%%%%%%%%%%%%%%%%%%%%%%%%%%%%%%%%%%%%
%%%%%%%%%%%%%%%%%%%%%%%%%%%%%%%%%%%%%%%%%%%%%%%%%%%%%%%%%%%%%%%%%%%%%%%%%%%%%%%%%%%%%%%%%%%%%%%%%%%%%%%%%%%%%%%%%%%%%%%%%%%%%
%%%%%%%%%%%%%%%%%%%%%%%%%%%%%%%%%%%%%%%%%%%%%%%%%%%%%%%%%%%%%%%%%%%%%%%%%%%%%%%%%%%%%%%%%%%%%%%%%%%%%%%%%%%%%%%%%%%%%%%%%%%%%

\section{Proof of the main result} 
\label{sec: Proof}
The goal of this section is the proof of the main result. 
Nonequivariantly, we can calculate the cohomology of the finite projective space $\Xp{p}$
from that of the infinite projective space $\Xp{\infty}$
by considering the long exact sequence of the pair $(\Xp{\infty}, \Xp{p})$,
which is equivalent to a Gysin sequence and which splits.
Equivariantly, we can examine the long exact sequence of the pair
$(\Xpq{\infty}{\infty}, \Xpq{p}{q})$, but the resulting Gysin sequence does not split,
making it unsuitable as a means of computing the cohomology of
$\Xpq{p}{q}$. Hence we use another approach.

We begin by showing that, in order to prove that a set of elements forms a basis for equivariant cohomology, it suffices to prove that it forms
a basis under restriction to nonequivariant cohomology and also on restriction to fixed points.
This method will then be applied to a family of elements constructed inductively via push-forwards to show that they form a basis, establishing the additive structure of the equivariant cohomology of $\Xpq{p}{q}$. 
We use this additive basis to derive the multiplicative structure. Finally, we will provide a way to visualize the additive generators of the cohomology.

    We would like to thank the referee for suggesting to us Proposition~\ref{prop:proofofbasis} and the overall strategy of the proof we employ here,
which is simpler than our original argument.

\subsection{How to recognize a basis} \label{susbsec: Lemma Basis}
We now work toward Proposition~\ref{prop:proofofbasis}, which gives us a method for showing that a set forms a basis for equivariant cohomology.
We use the restriction maps discussed in section~\ref{subsec:restrictions} and their represented equivalents $\rho^*$, the restriction to the nonequivariant case, and $\Phi^{C_2}$, the geometric fixed-point functor.

\begin{lemma}\label{lem:equivalence}
If $f\colon X\to Y$ is a map of (nonparametrized) $C_2$-spectra such that
$\rho^*f$ and $\Phi^{C_2}f$ are both nonequivariant equivalences, then $f$ is a $C_2$-equivalence.
\end{lemma}

\begin{proof}
The statement follows from \cite[V.7.5(iii)]{LMS}, but the proof is straightforward
in this special case:
Let $EC_2$ denote the universal free $C_2$-space, a free $C_2$-CW complex that is nonequivariantly
contractible. Let $\tE C_2$ be the cofiber in the sequence $(EC_2)_+ \to S^0 \to \tE C_2$.
The fact that $\rho^*f$ is a nonequivariant equivalence implies that
\[
 f\smsh (EC_2)_+\colon X\smsh (EC_2)_+ \to Y\smsh (EC_2)_+
\]
is a $C_2$-equivalence \cite[II.2.12]{LMS}.
On the other hand,
\[
 f\smsh \tE C_2 \colon X\smsh \tE C_2  \to Y\smsh \tE C_2 
\]
is a nonequivariant equivalence, because both spectra are nonequivariantly trivial, and on
$C_2$-fixed sets we have $(f\smsh \tE C_2)^{C_2} = \Phi^{C_2}f$, which is an equivalence by assumption.
Therefore, $f\smsh \tE C_2$ is also an equivariant equivalence. That $f$ is an equivalence now follows.
\end{proof}

We now want to generalize a nonparametrized, nonequivariant result which allows one to verify whether or not a collection of cohomology classes form a basis by checking whether an associated map of spectra is an equivalence.
Throughout we will use $\smsh$ to denote the parametrized fiberwise smash product of spectra over $B$.

  Consider a family of cohomology classes  $\mathcal{X}=\{ x_\alpha \in H_{C_2}^{\gamma_\alpha}(X;\Mackey A) \}$, where for each $\alpha$ one has
  $\gamma_\alpha = {\delta} + v_\alpha$ for some $v_\alpha\in RO(C_2)$ and a fixed ${\delta}\in RO(\Pi B)$.
  Each of its elements can be represented by a map $x_\alpha\colon X \to H\Mackey A^{{\delta}+v_\alpha}$, which can be extended to a map $H\Mackey A\smsh X\to H\Mackey A^{{\delta}+v_\alpha}$ of $H\Mackey A$-modules.
By further smashing this map with $H\Mackey A^{-{\delta}}$  and then applying the product map $H\Mackey A^{-{\delta}}\smsh_{H\Mackey A} H\Mackey A^{{\delta}+r_{\alpha}} \to H\Mackey A^{r_{\alpha}}$,
one obtains $H\Mackey A^{-{\delta}}\smsh X\to H\Mackey A^{v_\alpha}$. Here $H \Mackey A^{-{\delta}}$ is the Eilenberg-Mac Lane spectrum representing cohomology in gradings $-{\delta} + RO(C_2)$ (see \cite[Section 3.7]{CostenobleWanerBook}) and 
\[
 H\Mackey A^{v_\alpha} = r^*H\Mackey A^{v_\alpha} = r^*(\susp^{v_\alpha}H\Mackey A)
\]
is the pullback of the nonparametrized spectrum representing cohomology shifted by $v_\alpha$, where $r$ is the projection of $B$ to a point.
In view of the $r_!$-$r^*$ adjunction (see \cite[Section 2.4 and Proposition 2.8.2]{CostenobleWanerBook}), we obtain 
\[
 \hat x_{\alpha}\colon r_!(H\Mackey A^{-{\delta}}\smsh X) \to \susp^{v_\alpha}H\Mackey A,
\]
a map of nonparametrized spectra.
Finally, the maps $\hat x_{\alpha}$ can be assembled together by taking the wedge sum of the targets and we will denote by $\hat{\mathcal{X}}$ the resulting map
\begin{eqnarray}
\label{eqn family}
   \hat{\mathcal{X}} = (\hat x_{\alpha})\colon r_!(H\Mackey A^{-{\delta}}\smsh X) 
  \to \Wedge_{\alpha} \susp^{v_\alpha}H\Mackey A.  
\end{eqnarray}
Notice that this same construction can be applied to any group and in the special case of the trivial group $\hat{\mathcal{X}}$ becomes 
\[
 (\hat x_\alpha)\colon r_!(H\Z^{-{\delta}}\smsh X) \to \Wedge_\alpha \susp^{n_\alpha}H\Z
\]
with $n_\alpha\in \ZZ$.

%% I'm not sure how to save the following, which confuses \delta and -\delta
% \begin{remark}
% Note that, in the event that ${\delta}$ comes from a vector bundle, we can realize $H\Mackey A^{-{\delta} }$ as $r^*H\Mackey A\smsh\Sigma_B^{\infty}S^{\delta}$ where  $\smsh$ is still the fiberwise smash product of spectra over $B$ and $S^{\delta}$ is the fiberwise one point compactification of the vector bundle ${\delta}$.
% \end{remark}

%Represent each element as a map $x_\alpha\colon X \to H\Z^{\omega+n_\alpha}$ over $B$, extend
%to a map $H\Z\smsh X\to H\Z^{\omega+n_\alpha}$ of $H\Z$-modules, then smash with $H\Z^{-\omega}$
%and use the $r_!$-$r^*$ adjunction to get a map
%\[
% \hat x_\alpha\colon r_!(H\Z^{-\omega}\smsh X) \to \susp^{n_\alpha}H\Z.
%\]
%Assemble these maps into a map
%\[
% (\hat x_\alpha)\colon r_!(H\Z^{-\omega}\smsh X) \to \Wedge_\alpha \susp^{n_\alpha}H\Z.
%\]

We need the following result on how to recognize a nonequivariant basis.

\begin{lemma}\label{lem:nonequivariantbasis}
Let $X$ be a nonequivariant ex-space over $B$ of finite type, let ${\delta}\in RO(\Pi B)$ and let
\[
 \mathcal{X}:=\big\{ x_\alpha \in H^{{\delta}+n_\alpha}(X;\Z) \big\}
\]
be a collection of cohomology elements in $H^{{\delta}+\Z}(X;\Z)$.
Consider the associated map
\[
 \hat{\mathcal{X}} = (\hat x_\alpha)\colon r_!(H\Z^{-{\delta}}\smsh X) \to \Wedge_\alpha \susp^{n_\alpha}H\Z.
\]
Then $\{ x_\alpha \}$
is a basis for $H^{{\delta}+\Z}(X;\Z)$ over $\Z$ if and only if $\hat{\mathcal{X}}$ is an equivalence.
\end{lemma}

Note that the use of $\delta$ here indicates that we are considering cohomology with coefficients twisted by $\delta$.

\begin{proof}
Suppose first that $\hat{\mathcal{X}}$ is an equivalence. Then we have the following for any $n$,
where $[-,-]_{H\Z}$ denotes homotopy classes of maps of $H\Z$-modules:
\begin{align*}
 [X, H\Z^{{\delta}+n}]^B
  &\iso [H\Z\smsh X, H\Z^{{\delta}+n}]^B_{H\Z} \\
  &\iso [H\Z^{-{\delta}}\smsh X, r^*(\susp^n H\Z)]^B_{H\Z} \\
  &\iso [r_!(H\Z^{-{\delta}}\smsh X), \susp^n H\Z]_{H\Z} \\
  &\iso \bigl[\Wedge_\alpha \susp^{n_\alpha}H\Z, \susp^n H\Z \bigr]_{H\Z} \quad\text{via $\hat{\mathcal X}$} \\
  &\iso \Dirsum_{\alpha} [\susp^{n_\alpha}H\Z, \susp^n H\Z]_{H\Z} \\
  &\iso \Dirsum_{\alpha} [S^{n_\alpha}, \susp^n H\Z] \\
  &\iso \Dirsum_{\alpha} H^{n-n_\alpha}(S^0;\Z)
\end{align*}
This shows that $H^{{\delta}+\Z}(X;\Z)$ is a free $H^{\Z}(S^0;\Z) = \Z$-module with basis given
by the $\{x_\alpha\}$.

Conversely, suppose that $\{x_\alpha\}$ is a basis.
Then the homology $H_{{\delta}+\Z}(X;\Z)$ is also free with a dual basis $\{y_\alpha\}$.
Represent $y_\alpha$ by a map $S^{n_\alpha} \to r_!(H\Z^{-{\delta}}\smsh X)$.
%and extend to a map
%\[
% \hat y_\alpha\colon \susp^{n_\alpha}H\Z \to r_!(H\Z^{-\omega}\smsh X)
%\]
%of $H\Z$-modules. Assemble these into a map
%\[
% (\hat y_\alpha) \colon \Wedge_{\alpha} \susp^{n_\alpha}H\Z \to r_!(H\Z^{-\omega}\smsh X).
%\]
The composite
\[
 S^{n_\beta} \xrightarrow{y_\beta} r_!(H\Z^{-{\delta}}\smsh X)
  \xrightarrow{\hat x_{\alpha}} \susp^{n_{\alpha}}H\Z
\]
represents $\langle x_\alpha, y_\beta \rangle = \delta_{\alpha,\beta}$.
The fact that $\{y_\alpha\}$ is a basis now implies that
\[
 (\hat x_{\alpha})_*\colon \pi_\Z r_!(H\Z^{-{\delta}}\smsh X) 
  \to \pi_\Z\bigl(\Wedge_\alpha \susp^{n_\alpha}H\Z\bigr)
\]
is an isomorphism, hence that $(\hat x_\alpha)$ is an equivalence.
\end{proof}

%We are finally able to prove the \textit{Guess and check} criterion.
We can now prove our criterion for recognizing an equivariant basis.

\begin{proposition}\label{prop:proofofbasis}
Let $X$ be a $C_2$-ex-space over $B$ of finite type and let
\[
 \mathcal{X}:= \big\{ x_\alpha \in H_{C_2}^{\gamma_\alpha}(X) \big\}
\]
be a collection of cohomology elements.
If $ \rho^*\mathcal{X}$ is a basis 
for $H^{RO(\Pi B)}(X;\Z)$
and $\mathcal{X}^{C_2}$ is a basis for $H^{RO(\Pi B)}(X^{C_2};\Z)$
(both as $\Z$-modules),
then $\{ x_\alpha \}$ is a basis for $H_{C_2}^{RO(\Pi B)}(X)$ as a module over
$H_{C_2}^{RO(C_2)}(S^0)$.
\end{proposition}

\begin{proof}
Because we are considering $H_{C_2}^{RO(\Pi B)}(X)$ as a module over the $RO(C_2)$-graded ring $H_{C_2}^{RO(C_2)}(S^0)$,
it suffices to restrict to parts of the cohomology of the form $H_{C_2}^{\delta+RO(C_2)}(X)$ for a fixed $\delta\in RO(\Pi B)$.
Hence we will restrict to just those indices $\alpha$ such that the corresponding grading $\gamma_\alpha$ is of the form
$\gamma_\alpha = {\delta} + v_\alpha$ with $v_\alpha\in RO(C_2)$. This restriction will be in place for the
remainder of the proof.
% {\color{red} It is not restrictive to fix $\nu\in RO(\Pi B)$ and only consider those parts of the cohomology which are of the form $H_{C_2}^{{\delta}+RO(C_2)}(X)$. Hence we will consider those indices $\alpha$ such that the corresponding grading $\gamma_\alpha$ is of the form
% $\gamma_\alpha = {\delta} + v_\alpha$ where $v_\alpha\in RO(C_2)$.} These restrictions will be in place for the
% remainder of the proof.

We first wish to show that 
\[
 \hat{\mathcal{X}}\colon r_!(H\Mackey A^{-{\delta}}\smsh X) \to \Wedge_{\alpha} \susp^{v_\alpha}H\Mackey A
\]
is an equivalence. By Lemma~\ref{lem:equivalence} it suffices
to prove that it is an equivalence nonequivariantly and on applying $\Phi^{C_2}$.
The first is easy: That it is a nonequivariant equivalence follows directly from Lemma~\ref{lem:nonequivariantbasis}
and our assumption that ${\rho^*x_\alpha}$ is a basis.

To show that $\Phi^{C_2}\hat{\mathcal X}$ is an equivalence, on applying $\Phi^{C_2}$ we get the map
\[
 \Phi^{C_2}(\hat x_\alpha)\colon 
   r_!(\Phi^{C_2}(H\Mackey A^{-{\delta}})\smsh X^{C_2}) \to
   \Wedge_\alpha \susp^{v_\alpha^{C_2}} \Phi^{C_2}(H\Mackey A)
\]
of $\Phi^{C_2}(H\Mackey A)$-modules.
$\Phi^{C_2}(H\Mackey A)$ is an algebra over $H\Z$: The map $H\Mackey A\to H\Mackey A\smsh \tE C_2$ makes the latter an algebra
over $H\Mackey A$, $(-)^{C_2}$ is lax monoidal, and $(H\Mackey A)^{C_2} \hmtpc H\ZZ$.
Moreover, $\Phi^{C_2}(H\Mackey A)$ has an augmentation
\[
 \Phi^{C_2}(H\Mackey A) \to \Phi^{C_2}(H\conc\Z) \hmtpc H\Z.
\]
Applying the adjunction between maps of $\Phi^{C_2}(H\Mackey A)$-modules and $H\Z$-modules
and applying the augmentation gives us
\[
 (\hat x_\alpha^{C_2})\colon 
   r_!(H\Z^{-{\delta}^{C_2}}\smsh X^{C_2}) \to
   \Wedge_\alpha \susp^{v_\alpha^{C_2}} H\Z
\]
which is an equivalence of $H\Z$-modules, again by Lemma~\ref{lem:nonequivariantbasis} 
and the assumption that $\{ x_\alpha^{C_2} \}$ is a basis.
We can recover $\Phi^{C_2}(\hat x_\alpha)$ by applying $\Phi^{C_2}(H\Mackey A)\smsh_{H\Z}-$,
showing that $\Phi^{C_2}(\hat x_\alpha)$ is also an equivalence.

So we now have that  $\hat{\mathcal{X}}$ is an equivalence. 
That $\{ x_\alpha \}$ is therefore a basis follows by the same argument as in the first half of the
proof of Lemma~\ref{lem:nonequivariantbasis}.
\end{proof}

\subsection{Additive structure}\label{susbsec: Additive}
We will now employ our recognition principle to determine the additive structure of the cohomology of finite projective spaces. We first define the proposed basis.

To every projective space $\Xpq{p}{q}$ we will now associate an auxiliary function 
$$F_{p,q}:\Z\rightarrow \mathcal{P}\left(\Mackey H_{C_2}^{RO(\Pi B)}\big(\Xpq{p}{q}_+\big) \right),$$
constructed inductively on $p$ and $q$,
and we shall show that $F_{p,q}(n)$ is a basis for $\Mackey H_{C_2}^{n\omega+RO(C_2)}(\Xpq{p}{q}_+)$ for each $n\in\ZZ$.

The base cases of the definition are provided by Corollary \ref{cor:fixedcasesBases}.  One sets
 $$F_{p,0}(n):=\left\{ \big[\Xp{p}\big]_\nu^*, \big[\Xp{p-1}\big]_{\nu+2}^*, \dots, \big[\Xp{}\big]_{\nu+2(p-1)}^* \right\},$$
% $$F_{p,0}(n):=\left\{ \cwt^n\big[\Xp{p}\big]^*, \cwt^{n-1}\big[\Xp{p-1}\big]^*, \dots, \cwt^{n-p+1}\big[\Xp{}\big]^* \right\}$$
where $\nu:=n(\omega-2)$, and       
$$F_{0,q}(n):=\left\{ \big[\Xq{q}\big]_{\nu}^*, \big[\Xq{(q-1)}\big]_{\nu+2}^*, \dots, \big[\Xq{}\big]_{\nu+2(q-1)}^* \right\},$$
%$$F_{0,q}(n):=\left\{ \cxwt^{-n}\big[\Xq{q}\big]_{\nu}^*, \cxwt^{-n-1}\big[\Xq{(q-1)}\big]_{\nu+2}^*, \dots, \cxwt^{-n-q+1}\big[\Xq{}\big]_{\nu+2(q-1)}^* \right\}$$
where $\nu:=-n(\chiw-2)=n(\omega-2\sigma)$.
For $p$ and $q$ both strictly greater than 0 we set
$$F_{p,q}(n):=
\begin{cases}  
\big\{\cwt^n\big[\Xpq{p}{q}\big]_0^*\big\}\cup i_!F_{p-1,q}(n-1)   & \text{ for } n\geq 0\\
\big\{\cxwt^{|n|}\big[\Xpq{p}{q}\big]_0^*\big\}\cup j_!F_{p,q-1}(n+1) & \text{ for } n< 0 \ ,
\end{cases}
$$
where $i:\Xpq{p-1}{q}\rightarrow\Xpq{p}{q}$ and $j:\Xpq{p}{(q-1)}\rightarrow\Xpq{p}{q}$ are the canonical inclusions.
It now remains to check that these functions identify an additive basis for the cohomology as we have claimed.

We shall use Proposition~\ref{prop:proofofbasis}, so we now consider the restriction of $F_{p,q}(n)$ to non-equivariant cohomology and to fixed points.

\begin{lemma}\label{lem:restrictions}
In $H^*(\Xp{p+q}_+;\ZZ)$ we have
%Let $x\in H^2(\Xp{n}_+;\Z)$ be the first Chern class of the universal line bundle, then we have
 $$\rho^*(\cwt)= 1\,, \quad
 \rho^*(\cxwt) = 1\,,\quad \text{and}\quad
 \rho^*\left(\big[\Xpq{p'}{q'}\big]_\nu^*\right) = \big[\Xp{p'+q'}\big]^*$$
for any admissible choice of $\nu$ with $0\leq p'\leq p$, $0\leq q'\leq q$ and $p'+q' > 0$.   
%\begin{align*}
% \rho^*(\cwt)&= 1 &&&
% \rho^*(\cxwt) &= 1 \\
% \rho^*\left(\big[\Xpq{p'}{q'}\big]_\nu^*\right) &= \big[\Xp{p'+q'}\big]^*&&&
 %\rho^*(\cxw) &= x \\
 %\rho^*(\cxwt^{-k}\cw^p) &= x^p& \text{and} & & 
% \rho^*(\cwt^{-k}\cxw^q) &= x^q.
%\end{align*}
%{\color{blue} Here is it enough to say that $\rho^*(\cwt)=\rho^*(\cxwt)=1$ and that $\rho^*$ maps fundamental classes to fundamental classes?}

As elements of
\begin{multline*}
    H^\ZZ(\Xpq{p}{q}_+^{C_2};\ZZ)\iso H^\Z((\Xp{p}\disjunion \Xp{q})_+;\Z)  \\ \iso H^\Z(\Xp{p}_+;\Z)\dirsum H^\Z(\Xp{q}_+;\Z)
\end{multline*}
we have
$$\cwt^{C_2} = (1,0)\,,\quad \cxwt^{C_2} = (0, 1)\,,\quad \text{and}
\quad \Big(\big[\Xpq{p'}{q'}\big]_\nu^*\Big)^{C_2}=\Big(\big[\Xp{p'}\big]^*,\big[\Xp{q'}\big]^*\Big)
$$
for any admissible choice of $\nu$ with $0\leq p'\leq p$, $0\leq q'\leq q$ and $p'+q' > 0$. 
%\begin{align*}
% \cwt^{C_2} &= (1,0) &&&
% \cxwt^{C_2} &= (0, 1) \\
% \cw^{C_2} &= (x, 1) &&&
% \cxw^{C_2} &= (1, x) \\
% (\cxwt^{-k}\cw^p)^{C_2} &= (0, 1) &\text{and}&&
% (\cwt^{-k}\cxw^q)^{C_2}&= (1, 0).
%\end{align*}
%{\color{blue} Here is it enough to say that $\cwt^{C_2}=(1,0)$, $\cxwt=(0,1)$ and that $\rho^*$ maps fundamental classes to the fundamental classes of the intersection with the fixed points?}
\end{lemma}

\begin{proof}
The statements about $\zeta_0$ and $\zeta_1$ follow from calculations in \cite{Co:BGU1preprint}.
The statements about the fundamental classes are straightforward consequences of Remark \ref{rem:restrictions}. 
%In fact $\rho^*$ maps a cohomology fundamental class to the cohomology fundamental class of the corresponding non-equivariant manifold. On the other hand in this particular setting the components of the fixed sets are $\Xp{p}$ and $\Xp{q}$, so $(-)^{C_2}$ maps the cohomology fundamental class of an embedded manifold $M$ to the pair
As noted in the statement of the lemma, $(\Xpq pq)^{C_2} = \Xp p \disjunion \Xp q$, and it follows that $(-)^{C_2}$ maps the fundamental cohomology class of an embedded manifold $M$ to the pair
\[
 \Big(\big[M\cap \Xp{p}\big]^*,\big[M\cap \Xp{q}\big]^*\Big). \qedhere
\]
\end{proof}

\begin{proposition}
$\Mackey H_{C_2}^{RO(\Pi B)}(\Xpq{p}{q}_+)$ is a free module over
$\Mackey H_{C_2}^{RO(C_2)}(S^0)$ generated by $Im\,(F_{p,q})$.

More specifically, for every $n\in \ZZ$ the set $F_{p,q}(n)$ gives an additive basis for $\Mackey H_{C_2}^{n\omega+ RO(C_2)}(\Xpq{p}{q}_+)$ and its image under $\rho^*$ and $\Phi^{C_2}$ provides additive bases for $H^{|n\omega+RO(C_2)|}(\Xpq{p}{q}_+;\ZZ)$ and $H^{(n\omega+RO(C_2))^{C_2}}(\Xpq{p}{q}_+^{C_2};\ZZ)$, respectively.
%If the functions $F_{p-1,q}$ and $F_{p,q-1}$ give a basis for each $n\in \Z$ then so does $F_{p,q}$.
\end{proposition}
\begin{proof}
The proof is by induction, with base cases represented by the projective spaces of the form $\Xp{p}$ and $\Xq{q}$. For these the first part of the statement is precisely Corollary~\ref{cor:fixedcasesBases}, while the second part follows easily from Lemma \ref{lem:restrictions} and the fact that
the nonequivariant cohomology $H^\Z(\Xp{n};\Z)$ is additively generated by the fundamental cohomology classes of (non-strictly) smaller projective spaces.
%both  $H^*(\Xpq{p}{q};\ZZ)$ and $H^*(\Xpq{p}{q}^{C_2};\ZZ)$ are additively generated by the fundamental cohomology classes of (non-strictly) smaller projective spaces. 

We now assume that $p>0$ and $q>0$ and that the result is true for every pair $p'\leq p$ and $q'\leq q$ with $p'+q' < p+q$.

In order to apply Proposition \ref{prop:proofofbasis} we need to verify that $\rho^*F_{p,q}$ and $F_{p,q}^{C_2}$ form bases 
for $H^{RO(\Pi B)}(\Xpq{p}{q};\Z)$
 and $H^{RO(\Pi B)}(\Xpq{p}{q}^{C_2};\Z)$, respectively. 
It suffices to restrict to gradings of the form $n\omega+RO(C_2)$, hence reducing the proof to the verification that the sets 
$\rho^*\big(F_{p,q}(n)\big)$ and $F_{p,q}(n)^{C_2}$ form bases for the groups $H^{n\omega+\ZZ}(\Xpq{p}{q};\ZZ)$ and $H^{(n\omega+\ZZ)^{C_2}}(\Xpq{p}{q}^{C_2};\ZZ)$, respectively.

%[I think that we should write the statement along these lines, otherwise we might have problems in the inductive step. ``For every $n$ the set $F_{p,q}(n)$ forms a basis for $H^{n\omega+RO(C_2)}_{C_2}(\PP(\C^{p+\sigma q}))$ and its images under $\rho^*$ and taking fixed points return bases for $H^*(\PP(\C^{p+q}))$ and $H^*(\PP(\C^{p+\sigma q}_{C_2}))$.'']

As with the definition of $F_{p,q}$, the inductive step has to be subdivided into two cases, depending on the value of $n$. We will only consider the case $n\geq 0$, as the case $n<0$ is essentially the same.

In view of the functorial compatibility between push-forwards and restriction maps, we have the following commutative diagram.
$$\xymatrix{
H^{(n-1)\omega+RO(C_2)}_{C_2}(\Xpq{p-1}{q}_+)\ar[r]^{\phantom{a}\quad\rho^*}\ar[d]^{i_!}& H^{2(n-1)+\ZZ}(\Xp{p+q-1}_+;\ZZ) \ar[d]^{i_!}\\
H^{n\omega+RO(C_2)}_{C_2}(\Xpq{p}{q}_+)\ar[r]^{\phantom{a}\quad\rho^*}& H^{2n+\ZZ}(\Xp{p+q}_+;\ZZ)
}$$
On the left side of the diagram we have the push-forward map on equivariant cohomology, while on the right we have the corresponding map on non-equivariant cohomology.  
%In this particular case, the push-forwards correspond to multiplication by $c_{\omega^\vee}$ and by $-x$, respectively. %[Note: perhaps it would be better to include in the push-forwards multiplication by the right units so that we simply get multiplication by $c_\omega$ and by x.]

By definition, $F_{p,q}(n) = \{\cwt^n[\Xpq{p}{q}]^*_0\}\union i_! F_{p-1,q}(n-1)$.
%It follows from the definition of the auxilliary functions that the left $i_!$ maps $F_{p-1,q}(n-1)$ inside $F_{p,q}(n)$. More precisely, the only element which does not belong to the image is $\cwt^n$. 
The inductive hypothesis ensures that $F_{p-1,q}(n-1)$ and its image under $\rho^*$ both form a basis for the cohomologies of the upper part of the diagram. Moreover, we know that the right vertical map is a split injection which, on generators, is given by
$i_![\Xp k]^* = [\Xp k]^*$ for $1\leq k \leq p+q-1$,
and we know that its cokernel is generated by the identity element $[\Xp{p+q}]^*$.
By Lemma~\ref{lem:restrictions}, the class $\cwt^n$ restricts to the identity, so we can conclude that $\rho^*F_{p,q}(n)$ indeed provides a basis for the non-equivariant cohomology.

% Moreover, the  right vertical map is surjective on $\big[\Xp{p+q-1}\big]^*\cdot H^*(\Xpq{p}{q})$.
% In fact we know more, on cohomology, $i_!$ is a split injection with image consisting precisely of multiples of $\big[\Xp{p+q-1}\big]^*$, hence the only summand missing is the one generated by the identity, \textit{i.e.} by $\big[\Xp{p+q}\big]^*$. In view of Lemma \ref{lem:restrictions}, the class $\cwt^n$ restricts to it, so we can conclude that $\rho^*F_{p,q}(n)$ indeed provides a basis for the non-equivariant cohomology.  

Next we turn to the fixed points.
%(Sort out what exactly is meant by taking the fixed points of a cohomology class of a space. I suppose we must mean geometric fixed points but then the question is where does that land. It seems that the geometric fixed points of $HA_G$ with respect to the subgroup $K$ is $HA_K$ given what is said in the statement of equivariant Poincare duality.)
We have the following commutative diagram.
%OldSean:Should the maps be labeled by \Phi^{C_2}?
%Steve: We defined the fixed-point map (-)^{C_2} on cohomology in Section 2.5, so it should be OK here.
%Sean: Excellent. Sorry.
$$\xymatrix@C+2em{%\phantom{a} \hspace{-1cm}
 H^{(n-1)\omega+RO(C_2)}_{C_2}(\Xpq{p-1}{q}_+)\ar[r]^{(-)^{C_2}}\ar[d]^{i_!}& H^{(n-1)\omega^{C_2}+\ZZ}(\Xpq{p-1}{q}_+^{C_2};\ZZ) \ar[d]^{i_!}\\
H^{n\omega+RO(C_2)}_{C_2}(\Xpq{p}{q}_+)\ar[r]^{(-)^{C_2}}& H^{n\omega^{C_2}+\ZZ}(\Xpq{p}{q}_+^{C_2};\ZZ)
}$$
Here, we have
\[
 H^{n\omega^{C_2}+\ZZ}(\Xpq{p}{q}_+^{C_2};\ZZ) = H^{2n+\ZZ}(\Xp p;\ZZ) \dirsum H^{\ZZ}(\Xp q;\ZZ)
\]
and similarly for $H^{(n-1)\omega^{C_2}+\ZZ}(\Xpq{p-1}{q}_+^{C_2};\ZZ)$.
The push-forward $i_!$ on the right of the diagram is thus the map
\begin{multline*}
 (i_!,1)\colon H^{2(n-1)+\ZZ}(\Xp {p-1};\ZZ) \dirsum H^{\ZZ}(\Xp q;\ZZ) \\
  \to H^{2n+\ZZ}(\Xp p;\ZZ) \dirsum H^{\ZZ}(\Xp q;\ZZ)
\end{multline*}
which, on the first summand, is the push-forward along $\Xp{p-1}\includesin \Xp p$ and on the second summand is the identity.
By our inductive assumption, $F_{p-1,q}(n)$ gives a basis for the top left group in the diagram that maps to the basis $F_{p-1,q}^{C_2}$
of $H^{2(n-1)+\ZZ}(\Xp {p-1};\ZZ) \dirsum H^{\ZZ}(\Xp q;\ZZ)$. We now argue as before: $i_!$ is a split injection whose cokernel is
generated by 
%OldSean: I think there shouldn't be a q here.
%Steve: You're right. Fixed. Also, we shouldn't be using \Phi here, changed to fixed point notation.
$([\Xp{p}]^*,0) = (1,0) = (\cwt^n)^{C_2}$ by Lemma~\ref{lem:restrictions}.
Hence $F_{p,q}(n)^{C_2}$ is a basis for $H^{n\omega^{C_2}+\ZZ}(\Xpq{p}{q}_+^{C_2};\ZZ)$.

We can now apply Proposition~\ref{prop:proofofbasis} to finish the inductive step and the proof.
\end{proof}

%Our standing assumption is that $F_{p-1,q}(n-1)$ is a basis for $H^{(n-1)\omega+RO(C_2)}_{C_2}(\PP(\C^{p+\sigma (q-1)}))$ and $F_{p-1,q}(n-1)^{C_2}$ is a basis for $H^\ZZ(\Xpq{p}{q}_+^{C_2};\ZZ)\iso H^\Z((\Xp{p}\disjunion \Xq{q})_+;\Z)$.
%Thus $(x,-1)\cdot\Phi^{C_2}( F_{p-1,q}(n-1))$ is a basis for $(x,-1)\cdot H^\ZZ(\PP(\C^{p+q\sigma})^{C_2}_+;\ZZ)$.
%The only summand missing is that corresponding to $(1,0)\in H^\ZZ(\PP(C^{p+q\sigma})^{C_2};\ZZ)$.
%This is the element to which $\Phi^{C_2*}(\cwt^n)=(1,0)$ maps.

%The case for $n<0$ is analogous and it is left to the reader. One simply has to make use of the inclusion $j:\Xpq{p-1}{q}\rightarrow \Xpq{p}{q}$.
% instead and it replaces $-(1-\epsilon)$ with $-(1-\kappa)(1-\epsilon)$.

%\begin{note} The following points still need to be properly addressed:
%\begin{enumerate}
%    \item  We need to check where $1-\epsilon$ is mapped under fixed points.
%    \item Check that we are taking geometric fixed points and what $\phi^{C_2}(HA)$ is. If this isn't the right thing to do, then change the language to refer only to a map from equivariant cohomology to ordinary cohomology of the fixed points.
%    \item Clearly explain the above map. That can happen in the background section I guess.
%    \item Finish proof of the above proposition about compatibility between fixed points and push-forwards. But I think this is relatively straightforward up to issues of language. %Essentially, this is obvious.
%\end{enumerate}
%\end{note}

If we unwind the definition of $F_{p,q}(n)$ we get the following result, whose proof is left to the reader.

\begin{proposition}\label{prop:additivestructure}
If $p\geq q$, then we have the following bases for the free modules
$\Mackey H_{C_2}^{n\omega+RO(C_2)}(\Xpq{p}{q}_+)$ over $\Mackey H_{C_2}^{RO(C_2)}(S^0)$:
%(Note that, in corner cases, some of the series of elements listed may be empty.)
\begin{itemize}
\setlength\itemsep{0.2 cm}
\item If $n\geq p$, we can take as a basis the set
$$
	\left\{ \cwt^{n-l}\big[\Xpq{p-l}{q}\big]_{l\omega}^* \right\}_{0\leq l<p} 
	\union	\left\{\big[\Xq{(q-l)}\big]_{\nu+2l}^* \right\}_{0\leq l<q},
$$
where $\nu=p\,\omega-(n-p)(\chiw-2) = n\omega - 2(n-p)\sigma$.

\vspace{0.1 cm}

\item If $p-q < n < p$, we can take as a basis the set
%\begin{multline*}
%	\left\{ \cwt^{n-l}\big[\Xpq{p-l}{q}\big]_{l\omega}^*
%	\right\}_{0\leq l \leq n}
%		 \union \left\{\cxwt^\delta\big[\Xpq{p-(n+l)}{(q-l+\delta)}\big]_{(n+l)\omega+(l-\delta)\chiw}^* \right\}_{\substack{\delta=0,1\\0 < l \leq p-n}} \\
%		 \cup\left\{ \big[\Xq{(q-p+n-l)}\big]_{p\omega+(p-n)\chiw+2l}^* \right\}_{0 < l < q+n-p}
%		, \Big[\Xq{(q-p+n-1)}\Big]_{p\omega+(p-n-1)\chiw}^*, 
%			\ldots, \Big[\Xq{(q-p+n-1)}\Big]_{p\omega+(q-1)\chiw}^* \right\} 
%\end{multline*}
\begin{multline*}
	\left\{ \cwt^{n-l}\big[\Xpq{p-l}{q}\big]_{l\omega}^*
	\right\}_{0\leq l < n}
		 \union \left\{\cxwt^\delta\big[\Xpq{p-(n+l+\delta)}{(q-l)}\big]_{(n+\delta)\omega+l(2+2\sigma)}^* \right\}_{\substack{\delta=0,1\\0 \leq l < p-n}} \\
		 \cup\left\{ \big[\Xq{(q-p+n-l)}\big]_{\nu+2l}^* \right\}_{0 \leq l < q+n-p}
%		, \Big[\Xq{(q-p+n-1)}\Big]_{p\omega+(p-n-1)\chiw}^*, 
%			\ldots, \Big[\Xq{(q-p+n-1)}\Big]_{p\omega+(q-1)\chiw}^* \right\} 
\end{multline*}
where $\nu = p\omega + (p-n)\chiw = n\omega + (p-n)(2+2\sigma)$.

\item If $0 \leq n \leq p-q$, we can take as a basis the set
% \begin{multline*}
% 	\left\{ \cwt^{n-l}\big[\Xpq{p-l}{q}\big]_{l\omega}^*
% 	\right\}_{0\leq l < n} \union
% 	\left\{\cxwt^\delta\big[\Xpq{p-(n+l+\delta)}{(q-l)}\big]_{(n+l+\delta)\omega+l\chiw}^* \right\}_{\substack{\delta=0,1\\0 \leq l < q}}\\
% 	\union\left\{ \big[\Xp{p-(n+q+l)}\big]_{(n+q)\omega+q\chiw+2l}^* \right\}_{0 \leq l <p-q-n}
% \end{multline*}
\begin{multline*}
	\left\{ \cwt^{n-l}\big[\Xpq{p-l}{q}\big]_{l\omega}^*
	\right\}_{0\leq l < n} \union
	\left\{\cxwt^\delta\big[\Xpq{p-(n+l+\delta)}{(q-l)}\big]_{(n+\delta)\omega+l(2+2\sigma)}^* \right\}_{\substack{\delta=0,1\\0 \leq l < q}}\\
	\union\left\{ \big[\Xp{p-(n+q+l)}\big]_{\nu+2l}^* \right\}_{0 \leq l <p-q-n}
\end{multline*}
where $\nu = (n+q)\omega + q\chi\omega = n\omega + q(2+2\sigma)$.

\item If $-q < n < 0$, we can take as a basis the set
% \begin{multline*}
% 	\left\{ \cxwt^{|n|-l}\big[\Xpq{p}{(q-l)}\big]_{l\chiw}^*
% 	\right\}_{0\leq l < |n|} \union
%     \left\{\cxwt^\delta\big[\Xpq{p-l}{(q-n-l-\delta)}\big]_{l\omega+(n+l+\delta)\chiw}^* \right\}_{\substack{\delta=0,1\\0 \leq l < q-|n|}}\\
%     \union\left\{ \big[\Xq{p+|n|-(q+l)}\big]_{(q-|n|)\omega+q\chiw+2l}^* \right\}_{0 \leq l < p+|n|-q}
% %	\{ \cxw^{|n|}, \cw\cxw^{|n|+1}, \ldots,
% %		\cw^{q-|n|}\cxw^q, \cwt^{-1}\cw^{q-|n|+1}\cxw^q, \ldots,
% %		\cwt^{-(|n|+p-q-1)} \cw^{p-1}\cxw^q \} \\
% %	\union \{ \cxwt^{|n|}, \cxwt^{|n|-1}\cxw, \ldots,
% %		\cxwt\cxw^{|n|-1}, \cxwt\cw\cxw^{|n|},
% %		\ldots, \cxwt \cw^{q-|n|}\cxw^{q-1} \} 
% \end{multline*}
\begin{multline*}
	\left\{ \cxwt^{|n|-l}\big[\Xpq{p}{(q-l)}\big]_{l\chiw}^*
	\right\}_{0\leq l < |n|} \\
	\union
    \left\{\cxwt^\delta\big[\Xpq{p-l}{(q-|n|-l-\delta)}\big]_{(n+\delta)\omega+(l+|n|)(2+2\sigma)}^* \right\}_{\substack{\delta=0,1\\0 \leq l < q-|n|}}\\
    \union\left\{ \big[\Xp{p-(q-|n|+l)}\big]_{\nu+2l}^* \right\}_{0 \leq l < p+|n|-q}
\end{multline*}
where $\nu = (q-|n|)\omega+q\chiw = n\omega + q(2+2\sigma)$.

\item If $n \leq -q$, we can take as a basis the set
$$
\left\{ \cxwt^{|n|-l}\big[\Xpq{p}{(q-l)}\big]_{l\chiw}^*
	\right\}_{0\leq l < q} \union
		\left\{\big[\Xp{p-l}\big]_{\nu+2l}^* \right\}_{0\leq l<p},
%\\	\{ \cwt^{-(|n|-q)}\cxw^q, \cwt^{-(|n|-q+1)}\cw \cxw^q, \ldots,
%			\cwt^{-(|n|+p-q-1)}\cw^{p-1}\cxw^q \} \\
%	\union \{ \cxwt^{|n|}, \cxwt^{|n|-1}\cxw, \ldots, 
%		\cxwt^{|n|-q+1}\cxw^{q-1} \}
$$
where $\nu=q\chiw+(q-|n|)(\omega-2) = n\omega-2n + 2q\sigma$.
\end{itemize}

\vspace{0.3 cm}

If $p < q$ we can write down a similar collection of bases:

\vspace{0.2 cm}

\begin{itemize}\setlength\itemsep{0.2 cm}
\item If $n\geq p$, we can take as a basis the set
$$
	\left\{ \cwt^{n-l}\big[\Xpq{p-l}{q}\big]_{l\omega}^* \right\}_{0\leq l<p} 
	\union	\left\{\big[\Xq{(q-l)}\big]_{\nu+2l}^* \right\}_{0\leq l<q},
$$
where $\nu= n\omega - 2(n-p)\sigma$.

\item If $0 < n < p$, we can take as a basis the set
\begin{multline*}
	\left\{ \cwt^{n-l}\big[\Xpq{p-l}{q}\big]_{l\omega}^*
	\right\}_{0\leq l < n}
		 \union \left\{\cxwt^\delta\big[\Xpq{p-(n+l+\delta)}{(q-l)}\big]_{(n+\delta)\omega+l(2+2\sigma)}^* \right\}_{\substack{\delta=0,1\\0 \leq l < p-n}} \\
		 \cup\left\{ \big[\Xq{(q-p+n-l)}\big]_{\nu+2l}^* \right\}_{0 \leq l < q+n-p}
%		, \Big[\Xq{(q-p+n-1)}\Big]_{p\omega+(p-n-1)\chiw}^*, 
%			\ldots, \Big[\Xq{(q-p+n-1)}\Big]_{p\omega+(q-1)\chiw}^* \right\} 
\end{multline*}
where $\nu = n\omega + (p-n)(2+2\sigma)$.
\item If $p-q \leq n \leq 0$, we can take as a basis the set
\begin{multline*}
	\left\{ \cxwt^{|n|-l}\big[\Xpq{p}{(q-l)}\big]_{l\chiw}^*
	\right\}_{0\leq l < |n|} \\
	\union
	\left\{\cxwt^\delta\big[\Xpq{p-l}{(q-|n|-l-\delta)}\big]_{(n+\delta)\omega+(l+|n|)(2+2\sigma)}^* \right\}_{\substack{\delta=0,1\\0 \leq l < p}}\\
	\union\left\{ \big[\Xq{(q-|n|-p-l))}\big]_{\nu+2l}^* \right\}_{0 \leq l < n-p+q}
\end{multline*}
where $\nu = n\omega + (p-n)(2+2\sigma)$

\vspace{0.1 cm}

\item If $-q < n < p-q$, we can take as a basis the set
\begin{multline*}
	\left\{ \cxwt^{|n|-l}\big[\Xpq{p}{(q-l)}\big]_{l\chiw}^*
	\right\}_{0\leq l < |n|} \\
	\union
	\left\{\cxwt^\delta\big[\Xpq{p-l}{(q-|n|-l-\delta)}\big]_{(n+\delta)\omega+(l+|n|)(2+2\sigma)}^* \right\}_{\substack{\delta=0,1\\0 \leq l < q-|n|}}\\
	\union\left\{ \big[\Xp{p-q+|n|-l}\big]_{\nu+2l}^* \right\}_{0 \leq l <p-q-n}
\end{multline*}
where $\nu = n\omega + q(2+2\sigma)$.

\item If $n \leq -q$, we can take as a basis the set
$$
\left\{ \cxwt^{|n|-l}\big[\Xpq{p}{(q-l)}\big]_{l\chiw}^*
	\right\}_{0\leq l < q} \union
		\left\{\big[\Xp{p-l}\big]_{\nu+2l}^* \right\}_{0\leq l<p},
$$
where $\nu=n\omega-2n + 2q\sigma$.\qed
\end{itemize}
\end{proposition}

\begin{remark}
We can pull back the bases above along the $C_2$-homeomorphism $\chi\colon \Xpq pq\to \Xpq qp$ to give similar, but
different bases. They can be obtained from the following equalities:
\begin{align*}
    \chi^*\cxwt &= \cwt \\
    \chi^*\cwt &= \cxwt \quad\text{and} \\
    \chi^*[\Xpq{p'}{q'}]_\nu^* &= [\Xpq{q'}{p'}]_{\chi\nu}^*.
\end{align*}

\end{remark}

\begin{example}
It might help to see the locations of the generators plotted in an example,
for which we will use $\Xpq{5}{3}$. 
Note that, in the cases $n\geq p$ and $n\leq -q$, the bases split naturally into two parts, one having $p$ elements and
the other having $q$ elements. This division can be maintained for the cases $-q < n < p$, though in several ways.

In the following diagrams we show one way to do that, with the basis elements from one part marked with
an open dot while the basis elements from the other part are marked with a closed dot.
The origin for the cohomology in gradings $n\omega+RO(C_2)$ is taken as $n\omega$
and the grid spacing is 2. Each diagram represents $\Mackey H_{C_2}^{n\omega + RO(C_2)}(\Xpq{5}{3}_+)$ where the value of $n$ is marked under the diagram itself.  

%\[
%\begin{tikzpicture}[scale=0.6]
% grid and axes
%	\draw[step=1cm, gray, very thin] (-1.8, -0.8) grid (9.8, 6.8);
%	\draw[thick] (-2, 0) -- (10, 0);
%	\draw[thick] (0, -1) -- (0, 7);

% label
% 	\node[below] at (4, -1) {$\Mackey H_{C_2}^{-5\omega + RO(C_2)}(\Xpq{5}{3}_+)$};

% basis
%	\node[below, fill=white] at (5, 3) {$\cwt^{-2}\cxw^3$};
%	\node[below, fill=white] at (9, 3) {$\cwt^{-6}\cw^4 \cxw^3$};
%	\node[above left, fill=white] at (0, 5) {$\cxwt^5$};
%	\node[above, fill=white] at (2, 5) {$\cxwt^3\cxw^2$};
%
%	\draw[fill=white] (5, 3) circle(3pt);
%	\draw[fill=white] (6, 3) circle(3pt);
%	\draw[fill=white] (7, 3) circle(3pt);
%	\draw[fill=white] (8, 3) circle(3pt);
%	\draw[fill=white] (9, 3) circle(3pt);
%
%	\fill (0, 5) circle(3pt);
%	\fill (1, 5) circle(3pt);
%	\fill (2, 5) circle(3pt);
%
%\end{tikzpicture}
%\]

\vspace{0.3 cm}
$
\phantom{aa} \hspace{- 2 cm}
\begin{tikzpicture}[scale=0.6]
% grid and axes
	\draw[step=1cm, gray, very thin] (-1.8, -0.8) grid (9.8, 5.8);
	\draw[thick] (-2, 0) -- (10, 0);
	\draw[thick] (0, -1) -- (0, 6);

% label
   % \node[above right, fill=white] at (7, 5){$n=-4$};
 	\node[below] at (4, -1) {$n=-4$};%{$\mathbf{n=-4}$};
    \node[below] at (0.8,-2) {$a=\big[\Xp{5}\big]_{8+6\sigma-4\omega}^*$};
    \node[below] at (6.7,-2) {$b=\big[\Xp{}\big]_{16+6\sigma-4\omega}^*$};
    \node[below] at (0.7,-3) {$y=\cxwt^4\big[\Xpq{5}{3}\big]_0^*\,$};
    \node[below] at (6.9,-3) {$z=\cxwt^2\big[\Xpq{5}{}\big]_{2\chiw}^*\,$};
% basis
	\node[below, fill=white] at (4, 3) {$a$}; %{$\cwt^{-1}\cxw^3$};
	\node[below, fill=white] at (8, 3){$b$};% {$\cwt^{-5}\cw^4 \cxw^3$};
	\node[above left] at (0, 4) {$y$};%{$\cxwt^4$};
	\node[above, fill=white] at (2, 4) {$z$};%{$\cxwt^2\cxw^2$};

	\draw[fill=white] (4, 3) circle(3pt);
	\draw[fill=white] (5, 3) circle(3pt);
	\draw[fill=white] (6, 3) circle(3pt);
	\draw[fill=white] (7, 3) circle(3pt);
	\draw[fill=white] (8, 3) circle(3pt);

	\fill (0, 4) circle(3pt);
	\fill (1, 4) circle(3pt);
	\fill (2, 4) circle(3pt);

\end{tikzpicture}
\hspace{0.5 cm}
\begin{tikzpicture}[scale=0.6]
% grid and axes
	\draw[step=1cm, gray, very thin] (-1.8, -0.8) grid (9.8, 5.8);x
	\draw[thick] (-2, 0) -- (10, 0);
	\draw[thick] (0, -1) -- (0, 6);

% label
 %   \node[above right, fill=white] at (7, 5){$n=-3$};
 	\node[below] at (4, -1) {$n=-3$};
\node[below] at (0.7,-3) {$y=\cxwt^3\big[\Xpq{5}{3}\big]_0^*\,\,$};
    \node[below] at (6.9,-3) {$z=\cxwt\big[\Xpq{5}{}\big]_{2\chiw}^*\,\,$};
    \node[below] at (0.8,-2) {$a=\big[\Xp{5}\big]_{6+6\sigma-3\omega}^*$};
    \node[below] at (6.7,-2) {$b=\big[\Xp{}\big]_{12+6\sigma-3\omega}^*$};

% basis
	\node[below, fill=white] at (3, 3) {$a$};%{$\cxw^3$};
	\node[below, fill=white] at (7, 3) {$b$}; %{$\cwt^{-4}\cw^4 \cxw^3$};
	\node[above left] at (0, 3) {$y$}; %{$\cxwt^3$};
	\node[above, fill=white] at (2, 3) {$z$};%{$\cxwt\cxw^2$};

	\draw[fill=white] (3, 3) circle(3pt);
	\draw[fill=white] (4, 3) circle(3pt);
	\draw[fill=white] (5, 3) circle(3pt);
	\draw[fill=white] (6, 3) circle(3pt);
	\draw[fill=white] (7, 3) circle(3pt);

	\fill (0, 3) circle(3pt);
	\fill (1, 3) circle(3pt);
	\fill (2, 3) circle(3pt);

\end{tikzpicture}
$

\vspace{0.5 cm}

$
\phantom{aa} \hspace{- 2 cm}
\begin{tikzpicture}[scale=0.6]
% grid and axes
	\draw[step=1cm, gray, very thin] (-1.8, -0.8) grid (9.8, 4.8);
	\draw[thick] (-2, 0) -- (10, 0);
	\draw[thick] (0, -1) -- (0, 5);

% label
 	\node[below] at (4, -1) {$n=-2$};

	\node[below, fill=white] at (0.5,-3) {$y=\cxwt^2\big[\Xpq{5}{3}\big]_0^*\,$};
	\node[below, fill=white] at (6.8, -3) {$z=\cxwt\big[\Xpq{4}{}\big]_{4+4\sigma-\omega}^*\,$};
	\node[below, fill=white] at (0.5, -2) {$a=\big[\Xpq{5}{}\big]_{2\chiw}^*$};
	\node[below, fill=white] at (6.3,-2) {$b=\big[\Xp{}\big]_{12+6\sigma-2\omega}^*$};
% basis
	\node[left, fill=white] at (0, 2) {$y$};
	\node[above left, fill=white] at (2, 3) {$z$};
	\node[below, fill=white] at (2, 2) {$a$};
	\node[below right, fill=white] at (6,3) {$b$};

	\draw[fill=white] (2, 2) circle(3pt);
	\draw[fill=white] (3, 3) circle(3pt);
	\draw[fill=white] (4, 3) circle(3pt);
	\draw[fill=white] (5, 3) circle(3pt);
	\draw[fill=white] (6, 3) circle(3pt);

	\fill (0, 2) circle(3pt);
	\fill (1, 2) circle(3pt);
	\fill (2, 3) circle(3pt);

\end{tikzpicture}
\hspace{0.5 cm}
\begin{tikzpicture}[scale=0.6]
% grid and axes
	\draw[step=1cm, gray, very thin] (-2.8, -0.8) grid (8.8, 4.8);
	\draw[thick] (-3, 0) -- (9, 0);
	\draw[thick] (0, -1) -- (0, 5);

% label
 	\node[below] at (3, -1) {$n=-1$};
    \node[below] at (-0.3,-3) {$y=\cxwt\big[\Xpq{5}{3}\big]_0^*\,$};
    \node[below] at (5.9,-3) {$z=\cxwt\big[\Xpq{3}{}\big]_{4+4\sigma}^*\,$};
    \node[below] at (-0.4,-2) {$a=\big[\Xpq{5}{2}\big]_{\chiw}^*$};
    \node[below] at (5.5,-2) {$b=\big[\Xp{}\big]_{10+6\sigma-\omega}^*$};
% basis
	\node[below right, fill=white] at (1, 1) {$a$};
	\node[below right, fill=white] at (5,3) {$b$};
	\draw[fill=white] (1, 1) circle(3pt);
	\draw[fill=white] (2, 2) circle(3pt);
	\draw[fill=white] (3, 3) circle(3pt);
	\draw[fill=white] (4, 3) circle(3pt);
	\draw[fill=white] (5, 3) circle(3pt);

	\node[above left, fill=white] at (0, 1) {$y$};
	\node[above left, fill=white] at (2, 3) {$z$};
	\fill (0, 1) circle(3pt);
	\fill (1, 2) circle(3pt);
	\fill (2, 3) circle(3pt);

\end{tikzpicture}
$

\vspace{0.5 cm}

$\phantom{aa} \hspace{- 2 cm}
\begin{tikzpicture}[scale=0.6]
% grid and axes
	\draw[step=1cm, gray, very thin] (-3.8, -1.8) grid (7.8, 4.8);
	\draw[thick] (-4, 0) -- (8, 0);
	\draw[thick] (0, -2) -- (0, 5);

% label
 	\node[below] at (2, -2) {$n=0$};
    \node[below] at (-1.6,-4) {$y=\cxwt\big[\Xpq{4}{3}\big]_{\omega}^*\,$};
    \node[below] at (4.5,-4) {$z=\cxwt\big[\Xpq{2}{}\big]_{4+4\sigma-\omega}^*$};
    \node[below] at (-1.6,-3) {$a=\big[\Xpq{5}{3}\big]_0^*\,$};
    \node[below] at (3.4,-3) {$b=\big[\Xp{}\big]_{8+6\sigma}^*\,$};
% basis
	\node[below, fill=white] at (0, 0) {$a$};
	\node[below right, fill=white] at (4,3) {$b$};
	\draw[fill=white] (0, 0) circle(3pt);
	\draw[fill=white] (1, 1) circle(3pt);
	\draw[fill=white] (2, 2) circle(3pt);
	\draw[fill=white] (3, 3) circle(3pt);
	\draw[fill=white] (4, 3) circle(3pt);

	\node[above left, fill=white] at (0, 1) {$y$};
	\node[above left, fill=white] at (2, 3) {$z$};
	\fill (0, 1) circle(3pt);
	\fill (1, 2) circle(3pt);
	\fill (2, 3) circle(3pt);

\end{tikzpicture}
\hspace{0.5 cm}
\begin{tikzpicture}[scale=0.6]
% grid and axes
	\draw[step=1cm, gray, very thin] (-4.8, -1.8) grid (6.8, 4.8);
	\draw[thick] (-5, 0) -- (7, 0);
	\draw[thick] (0, -2) -- (0, 5);

% label
 	\node[below] at (1, -2) {$n=1$};
    \node[below] at (-2.2, -3) {$a=\cwt\big[\Xpq{5}{3}\big]_0^*\,$};
    \node[below] at (3.4,-3) {$b=\big[\Xp{}\big]_{6+6\sigma+\omega}^*$};
    \node[below] at (-2.1,-4) {$y=\cxwt\big[\Xpq{3}{3}\big]_{2\omega}^*$};
    \node[below] at (4.2,-4) {$z=\cxwt\big[\Xpq{1}{}\big]_{4+4\sigma+2\omega}^*$};
% basis
	\node[below left, fill=white] at (-1, 0) {$a$};
%	\node[below, fill=white] at (0, 0) {$\cw$};
	\node[below right, fill=white] at (3, 3) {$b$};
	\draw[fill=white] (-1, 0) circle(3pt);
	\draw[fill=white] (0, 0) circle(3pt);
	\draw[fill=white] (1, 1) circle(3pt);
	\draw[fill=white] (2, 2) circle(3pt);
	\draw[fill=white] (3, 3) circle(3pt);

	\node[above left, fill=white] at (0, 1) {$y$};
	\node[above left, fill=white] at (2, 3) {$z$};
	\fill (0, 1) circle(3pt);
	\fill (1, 2) circle(3pt);
	\fill (2, 3) circle(3pt);

\end{tikzpicture}
$

%\newpage
$\phantom{aa} \hspace{- 2 cm}
\begin{tikzpicture}[scale=0.6]
% grid and axes
	\draw[step=1cm, gray, very thin] (-5.8, -1.8) grid (5.8, 4.8);
	\draw[thick] (-6, 0) -- (6, 0);
	\draw[thick] (0, -2) -- (0, 5);

% label
 	\node[below] at (0, -2) {$n=2$};
 %	\node[below] at (4, -2) {$n=-4$};%{$\mathbf{n=-4}$};
    \node[below] at (-3.2,-3) {$a=\cwt^2\big[\Xpq{5}{3}\big]_0^*\,$};
    \node[below] at (2.8,-3) {$b=\big[\Xpq{1}{}\big]_{4+4\sigma+2\omega}^*\,$};
    \node[below] at (-3.1,-4) {$y=\cxwt\big[\Xpq{2}{3}\big]_{3\omega}^*$};
    \node[below] at (2.5,-4) {$z=\big[\Xq{}\big]_{4+6\sigma+2\omega}^*\,$};
% basis
	\node[below, fill=white] at (-2, 0) {$a$};
	\node[below right, fill=white] at (2, 2) {$b$};
	\draw[fill=white] (-2, 0) circle(3pt);
	\draw[fill=white] (-1, 0) circle(3pt);
	\draw[fill=white] (0, 0) circle(3pt);
	\draw[fill=white] (1, 1) circle(3pt);
	\draw[fill=white] (2, 2) circle(3pt);

	\node[above left, fill=white] at (0, 1) {$y$};
	\node[above right, fill=white] at (2, 3) {$z$};
	\fill (0, 1) circle(3pt);
	\fill (1, 2) circle(3pt);
	\fill (2, 3) circle(3pt);

\end{tikzpicture}
\hspace{0.5 cm}
\begin{tikzpicture}[scale=0.6]
% grid and axes
	\draw[step=1cm, gray, very thin] (-5.8, -1.8) grid (5.8, 3.8);
	\draw[thick] (-6, 0) -- (6, 0);
	\draw[thick] (0, -2) -- (0, 4);

% label
 	\node[below] at (0, -2) {$n=3$};
 %	\node[below] at (4, -1) {$n=-4$};%{$\mathbf{n=-4}$};
    \node[below] at (-3.3,-3) {$a=\cwt^3\big[\Xpq{5}{3}\big]_0^*\,$};
    \node[below] at (2.8,-3) {$b=\big[\Xpq{1}{2}\big]_{2+2\sigma+3\omega}^*\,$};
    \node[below] at (-3.2,-4) {$y=\cxwt\big[\Xpq{1}{3}\big]_{4\omega}^*$};
    \node[below] at (2.3,-4) {$z=\big[\Xq{}\big]_{4+4\sigma+3\omega}^*$};
% basis
	\node[below, fill=white] at (-3, 0) {$a$};
	\node[below right, fill=white] at (1, 1) {$b$};
	\draw[fill=white] (-3, 0) circle(3pt);
	\draw[fill=white] (-2, 0) circle(3pt);
	\draw[fill=white] (-1, 0) circle(3pt);
	\draw[fill=white] (0, 0) circle(3pt);
	\draw[fill=white] (1, 1) circle(3pt);

	\node[above left, fill=white] at (0, 1) {$y$};
	\node[above right, fill=white] at (2, 2) {$z$};
	\fill (0, 1) circle(3pt);
	\fill (1, 2) circle(3pt);
	\fill (2, 2) circle(3pt);

\end{tikzpicture}
$ 

\vspace{1 cm}

$\phantom{aa} \hspace{- 2 cm}
\begin{tikzpicture}[scale=0.6]
% grid and axes
	\draw[step=1cm, gray, very thin] (-5.8, -1.8) grid (5.8, 2.8);
	\draw[thick] (-6, 0) -- (6, 0);
	\draw[thick] (0, -2) -- (0, 3);

% label
 	\node[below] at (0, -2) {$n=4$};
 %	\node[below] at (4, -1) {$n=-4$};%{$\mathbf{n=-4}$};
    \node[below] at (-3,-3) {$a=\cwt^4\big[\Xpq{5}{3}\big]_0^*\,$};
    \node[below] at (2.8,-3) {$b=\big[\Xpq{1}{3}\big]_{4\omega}^*$};
    \node[below] at (-3,-4) {$y=\big[\Xq{3}\big]_{2\sigma+4\omega}^*\,$};
    \node[below] at (3,-4) {$z=\big[\Xq{}\big]_{4+2\sigma-4\omega}^*$};
% basis
	\node[below, fill=white] at (-4, 0) {$a$};
	\node[below, fill=white] at (0, 0) {$b$};
	\draw[fill=white] (-4, 0) circle(3pt);
	\draw[fill=white] (-3, 0) circle(3pt);
	\draw[fill=white] (-2, 0) circle(3pt);
	\draw[fill=white] (-1, 0) circle(3pt);
	\draw[fill=white] (0, 0) circle(3pt);

	\node[above, fill=white] at (0, 1) {$y$};
	\node[above right, fill=white] at (2, 1) {$z$};
	\fill (0, 1) circle(3pt);
	\fill (1, 1) circle(3pt);
	\fill (2, 1) circle(3pt);

\end{tikzpicture}
\hspace{0.5 cm}
\begin{tikzpicture}[scale=0.6]
% grid and axes
	\draw[step=1cm, gray, very thin] (-6.8, -1.8) grid (4.8, 2.8);
	\draw[thick] (-7, 0) -- (5, 0);
	\draw[thick] (0, -2) -- (0, 3);

% label
% 	\node[below] at (-1, -2) {$\Mackey H_{C_2}^{5\omega + RO(C_2)}(\Xpq{5}{3}_+)$};
 	\node[below] at (-1, -2) {$n=5$};%{$\mathbf{n=-4}$};
    \node[below] at (-4,-3) {$a=\cwt^5\big[\Xpq{5}{3}\big]_0^*\,$};
    \node[below] at (1.7,-3) {$b=\cwt\big[\Xpq{1}{3}\big]_{4\omega}^*\,$};
    \node[below] at (-4.5,-4) {$y=\big[\Xq{3}\big]_{5\omega}^*$};
    \node[below] at (1.3,-4) {$z=\big[\Xq{}\big]_{4+5\omega}^*\,$};
% basis
	\node[above, fill=white] at (-5, 0) {$a$};
	\node[above, fill=white] at (-1, 0) {$b$};
	\draw[fill=white] (-5, 0) circle(3pt);
	\draw[fill=white] (-4, 0) circle(3pt);
	\draw[fill=white] (-3, 0) circle(3pt);
	\draw[fill=white] (-2, 0) circle(3pt);
	\draw[fill=white] (-1, 0) circle(3pt);

	\node[below, fill=white] at (0, 0) {$y$};
	\node[below right, fill=white] at (2, 0) {$z$};
	\fill (0, 0) circle(3pt);
	\fill (1, 0) circle(3pt);
	\fill (2, 0) circle(3pt);

\end{tikzpicture}
$

\vspace{1 cm}

$\phantom{aa} \hspace{- 2 cm}
\begin{tikzpicture}[scale=0.6]
% grid and axes
	\draw[step=1cm, gray, very thin] (-7.8, -2.8) grid (3.8, 2.8);
	\draw[thick] (-8, 0) -- (4, 0);
	\draw[thick] (0, -3) -- (0, 3);

% label
 	\node[below] at (-2, -3) {$n=6$};
%     	\node[below] at (0, -5) {$n=-4$};%{$\mathbf{n=-4}$};
    \node[below] at (-5.1,-4) {$a=\cwt^6\big[\Xpq{5}{3}\big]_0^*$};
    \node[below] at (1,-4) {$b=\cwt^2\big[\Xpq{1}{3}\big]_{4\omega}^*\,$};
    \node[below] at (-4.9,-5) {$y=\big[\Xq{3}\big]_{-2\sigma+6\omega}^*$};
    \node[below] at (1,-5) {$z=\big[\Xq{}\big]_{4-2\sigma+6\omega}^*\,$};
% basis
	\node[above, fill=white] at (-6, 0) {$a$};
	\node[above, fill=white] at (-2, 0) {$b$};
	\draw[fill=white] (-6, 0) circle(3pt);
	\draw[fill=white] (-5, 0) circle(3pt);
	\draw[fill=white] (-4, 0) circle(3pt);
	\draw[fill=white] (-3, 0) circle(3pt);
	\draw[fill=white] (-2, 0) circle(3pt);

	\node[below, fill=white] at (0, -1) {$y$};
	\node[below right, fill=white] at (2, -1) {$z$};
	\fill (0, -1) circle(3pt);
	\fill (1, -1) circle(3pt);
	\fill (2, -1) circle(3pt);

\end{tikzpicture}
\hspace{0.5 cm}
\begin{tikzpicture}[scale=0.6]
% grid and axes
	\draw[step=1cm, gray, very thin] (-7.8, -2.8) grid (3.8, 2.8);
	\draw[thick] (-8, 0) -- (4, 0);
	\draw[thick] (0, -3) -- (0, 3);

% label
  	\node[below] at (-2, -3) {$n=7$};
%     	\node[below] at (0, -5) {$n=-4$};%{$\mathbf{n=-4}$};
    \node[below] at (-5.1,-4) {$a=\cwt^7\big[\Xpq{5}{3}\big]_0^*$};
    \node[below] at (1,-4) {$b=\cwt^3\big[\Xpq{1}{3}\big]_{4\omega}^*\,$};
    \node[below] at (-4.9,-5) {$y=\big[\Xq{3}\big]_{-4\sigma+7\omega}^*$};
    \node[below] at (1,-5) {$z=\big[\Xq{}\big]_{4-2\sigma+7\omega}^*$};
% basis
	\node[above, fill=white] at (-7, 0) {$a$};
	\node[above, fill=white] at (-3, 0) {$b$};
	\draw[fill=white] (-7, 0) circle(3pt);
	\draw[fill=white] (-6, 0) circle(3pt);
	\draw[fill=white] (-5, 0) circle(3pt);
	\draw[fill=white] (-4, 0) circle(3pt);
	\draw[fill=white] (-3, 0) circle(3pt);

	\node[below, fill=white] at (0, -2) {$y$};
	\node[below right, fill=white] at (2, -2) {$z$};
	\fill (0, -2) circle(3pt);
	\fill (1, -2) circle(3pt);
	\fill (2, -2) circle(3pt);

\end{tikzpicture}
$
\end{example}

\newpage
Before moving on to discuss the multiplicative structure of the cohomology, we need the following
result, which follows easily from the definition of the basis $F_{p,q}(n)$
and the structure of $\Mackey H_{C_2}^{RO(C_2)}(S^0)$.

\begin{corollary}
\label{cor:pushforward}
The push-forward
\[
 i_!\colon \Mackey H_{C_2}^{(n-1)\omega+RO(C_2)}(\Xpq{p-1}{q}_+)
 	\to \Mackey H_{C_2}^{n\omega+RO(C_2)}(\Xpq{p}{q}_+).
\]
is a split monomorphism if $n\geq 0$.
The cokernel is a free $\Mackey H_{C_2}^{RO(C_2)}(S^0)$-module on a single generator in grading
$n(\omega-2)$.
\[
 i_!\colon \Mackey H_{C_2}^{(n-1)\omega+a+b\sigma+2}(\Xpq{p-1}{q}_+) 
 	\to \Mackey H_{C_2}^{n\omega+a+b\sigma}(\Xpq{p}{q}_+)
\]
is an isomorphism if, in addition, $a > -2n$ and $a+b > -2n$.
%Moreover, 
%\[
% i_! i^*(x) = -(1-\kappa)(1-\epsilon)\cw x
%\]
%for any $x\in \Mackey H_{C_2}^{RO(\Pi B)}(\Xpq{p}{q}_+)$.

The push-forward
\[
 j_!\colon \Mackey H_{C_2}^{(n+1)\omega+RO(C_2)}(\Xpq{p}{(q-1)}_+) 
  \to \Mackey H_{C_2}^{n\omega+RO(C_2)}(\Xpq{p}{q}_+).
\]
is a split monomorphism if $n\leq 0$.
The cokernel is a free $\Mackey H_{C_2}^{RO(C_2)}(S^0)$-module on a single generator in grading $n(\omega-2\sigma)$.
\[
 j_!\colon \Mackey H_{C_2}^{(n+1)\omega+a+b\sigma-2-2\sigma}(\Xpq{p}{(q-1)}_+) 
 	\to \Mackey H_{C_2}^{n\omega+a+b\sigma}(\Xpq{p}{q}_+)
\]
is an isomorphism if, in addition, $a > 0$ and $a+b > -2n$.
%Moreover,
%\[
% j_! j^*(x) = -(1-\kappa)(1-\epsilon)\cxw x
%\]
%for any $x\in \Mackey H_{C_2}^{RO(\Pi B)}(\Xpq{p}{q}_+)$.
\qed
\end{corollary}

\subsection{The multiplicative structure}\label{susbsec: Multiplicative}

We can now show our main result, giving the multiplicative structure of the cohomology of $\Xpq pq$.

\begin{theorem}\label{thm:multiplicative}
Let $0 \leq p < \infty$ and $0 \leq q < \infty$ with $p+q > 0$.
As an algebra over $\Mackey H_{C_2}^{RO(C_2)}(S^0)$,
$\Mackey H_{C_2}^{RO(\Pi B)}(\Xpq{p}{q}_+)$ is generated by the elements
$\cw$, $\cxw$, $\cwt$, and $\cxwt$,
together with the following elements:
$\cw^p$ is infinitely divisible by $\cxwt$, meaning that, for $k\geq 1$,
there are unique elements $\cxwt^{-k}\cw^p$ such that
\[
 \cxwt^k \cdot \cxwt^{-k} \cw^p = \cw^p.
\]
Similarly, $\cxw^q$ is infinitely divisible by $\cwt$, meaning that, for $k\geq 1$,
there are unique elements $\cwt^{-k}\cxw^q$ such that
\[
 \cwt^k \cdot \cwt^{-k} \cxw^q = \cxw^q.
\]
The generators satisfy the following relations:
\begin{align*}
	\cw^p \cxw^q &= 0 \\
	\cwt \cxw &= (1-\kappa)\cxwt \cw + e^2 \qquad\text{and} \\
	\cxwt \cwt &= \xi.
\end{align*}
\end{theorem}

\begin{proof}
We know that the theorem is true if $p=0$ or $q=0$, and we proceed by induction on $p+q$. 
So assume that $p\geq 1$, $q\geq 1$, and the theorem is true for all
$p'$ and $q'$ with $p'+q' < p+q$.

We have already constructed the elements that we claim are multiplicative generators,
including the elements $\cxwt^{-k}\cw^p$ and $\cxwt^{-k}\cw^p$.
Proposition~\ref{prop:additivestructure} and Lemma~\ref{lem:pushforwardfixed} show that they generate, so it remains only to show
that they satisfy the relations given and that these completely determine the structure.

The cohomology of $\Xpq{p}{q}$ inherits from the cohomology of $\Xpq{\infty}{\infty}$
the relations
\begin{align*}
	\cwt \cxw &= (1-\kappa)\cxwt \cw + e^2 \qquad\text{and} \\
	\cxwt \cwt &= \xi.
\end{align*}
For the relation $\cw^p \cxw^q = 0$, there are two arguments:
One is to examine the bases calculated in Proposition~\ref{prop:additivestructure}
to see that the cohomology is 0 in the grading in which this element would live. A quicker
argument is inductive: If $j\colon \Xpq{p}{(q-1)} \to \Xpq{p}{q}$
is the inclusion, then
\[
 \cw^p \cxw^q = -(1-\kappa)(1-\epsilon) j_!(\cw^{p}\cxw^{q-1}) = 0
\]
by induction.

The uniqueness of the elements $\cxwt^{-k}\cw^p$ and $\cwt^{-k}\cxw^q$ will follow
from the cancellation properties we prove next.
The completeness of the set of relations will follow from Proposition~\ref{prop:relations}.
\end{proof}

The following two results complete the description of the multiplicative structure.

\begin{proposition}\label{prop: cancellation}
The elements $\cwt$ and $\cxwt$ have the following cancellation properties.
For an element $x\in \Mackey H_{C_2}^{n\omega+a+b\sigma}(\Xpq{p}{q}_+)$
with $n$, $a$, $b \in \Z$, we have
\begin{align*}
 \cxwt x &= 0 \implies x = 0
 	&&\text{if }  n\geq p,\ a \geq 2(p-n), \text{ and } a+b \geq 2(p-n) \\
	&&&\text{or if }  p-q \leq n < p,\ a \geq 2(p-n), \text{ and } a+b \geq 4(p-n); \\
 \cwt x &= 0 \implies x = 0
 	&&\text{if }  n \leq -q,\ a \geq 2q, \text{ and } a+b \geq 2(q-n) \\
	&&&\text{or if }  -q < n \leq p-q,\ a \geq 2q, \text{ and } a+b \geq 4q.
\end{align*}
\end{proposition}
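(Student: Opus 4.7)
The strategy is to induct on $p+q$, using the split short exact sequences of Lemma~\ref{lem:splitting} together with the facts that $\cwt$ is invertible on $\SV(S_0^{2(p-1)+q\MM})$, $\cxwt$ is invertible on $\SV(S_1^{2(q-1)+p\MM})$, and each of $\cwt$, $\cxwt$ acts on the \emph{other} sphere as $\xi$ times the inverse of its partner.

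\smallskip
\noindent\textbf{Base case ($p=0$ or $q=0$).} By Theorem~\ref{thm:fixedcases}, $\cwt$ is a unit in $\SV((X_{p,0})_+)$, so $\cwt$-cancellation is immediate; Proposition~\ref{prop:fixedcaseschi} gives $\cxwt=\xi\cwt^{-1}$, which reduces $\cxwt$-cancellation on this free $\HS$-module to injectivity of $\xi$-multiplication on $\HS$-coefficients lying in explicit shifted gradings. The reinforced bound $|\alpha|\geq 4(p-n)$ in Case B is precisely what keeps every such coefficient grading inside the region where $\xi$-multiplication is injective on $\HS$, as read off the $\HS$-diagram in Remark~\ref{rem: cohomology of spheres}; in Case A the coefficient gradings lie already well inside that region. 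The case $p=0$ is symmetric.

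\smallskip
\noindent\textbf{Inductive step.} Assume the proposition for $(p-1,q)$ and $(p,q-1)$; I will sketch the $\cxwt$-claim (the $\cwt$-claim is dual, swapping $(\cw,\cwt,S_0,X_{p-1,q})\leftrightarrow(\cxw,\cxwt,S_1,X_{p,q-1})$). Multiplication by $\cxwt$ shifts $n\omega+\alpha$ to $(n-1)\omega+(\alpha+\MM)$. If $n\geq p-q+1$ then both layers lie in the range of Lemma~\ref{lem:splitting}(2); naturality of that SES in $\alpha$ and in the inclusion $X_{p,q-1}\hookrightarrow X_{p,q}$ yields a map of split SESs. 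On the sphere piece $\SV(S_1^{2(q-1)+p\MM})\iso\HS\{u\}[\cxwt^{\pm 1}]$ multiplication by $\cxwt$ is invertible; on the quotient $\SV((X_{p,q-1})_+)$ it is injective by induction (the degree conditions of the proposition carry over since replacing $q$ by $q-1$ only weakens them). The $5$-lemma gives injectivity in the middle. The only remaining case is $n=p-q$ (inside Case B), where the target $n-1=p-q-1$ falls outside the range of (2); here switch to Lemma~\ref{lem:splitting}(1) at both levels. On $\SV(S_0^{2(p-1)+q\MM})\iso\HS\{u\}[\cwt^{\pm 1}]$ the class $\cxwt$ acts as $\xi\cwt^{-1}$, whose injectivity on the relevant $\HS$-summand is guaranteed precisely by the reinforced hypothesis $|\alpha|\geq 4q$, $\alpha^G\geq 2q$; on the quotient $\SV((X_{p-1,q})_+)$ the inductive hypothesis at $(p-1,q)$ applies, since its Case~B boundary sits at $n=p-q-1$, one step below the current boundary, and its degree requirements $|\alpha|\geq 4(q-1)$, $\alpha^G\geq 2(q-1)$ are weaker than ours (similarly after the $\MM$-shift at the target). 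A second application of the $5$-lemma concludes.

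\smallskip
\noindent\textbf{Main obstacle.} The $5$-lemma reduction is mechanical whenever the relevant sphere term is the one on which the acting class is invertible. The genuine difficulty is the boundary $n=p-q$ (and its dual for $\cwt$), where one is forced onto the \emph{other} sphere and the cancellation reduces to $\xi$-multiplication on a specific $RO(G)$-graded piece of $\HS$. Matching the injectivity region of $\xi$ on $\HS$ against the hypotheses of the proposition is exactly what dictates the factor $4$ (rather than $2$) appearing in the Case~B bounds, and constitutes the bulk of the degree bookkeeping in the argument.
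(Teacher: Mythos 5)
Your outline is correct, but it takes a genuinely different route from the paper's. The paper also inducts on $p+q$, but it transports the cancellation to the smaller projective spaces through the push-forward maps $i_!$ and $j_!$ of Lemma~\ref{lem:pushforward}: these are split monomorphisms, and isomorphisms in explicit ranges, and since multiplication by $\cwt$ or $\cxwt$ passes across them by the projection formula, the inductive hypothesis on $X_{p-1,q}$ (used for $n\geq 0$) or $X_{p,q-1}$ (used for $n<0$) gives the statement on $X_{p,q}$ directly; the only exceptional grading is $n=0$ when $p=q$, where the bounds force $\Mackey H_G^{\alpha}((X_{p,q})_+)=0$. You instead work with the restriction short exact sequences of Lemma~\ref{lem:splitting} and a four-lemma argument, splitting cases at $n=p-q$ rather than at $n=0$, which obliges you to control the sphere term that the push-forward route never sees. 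Your handling of it is sound, but note that what you call the main obstacle is actually vacuous: at the boundary $n=p-q$ the relevant graded piece of the sphere cohomology (for either $S_0^{2(p-1)+q\MM}$ or $S_1^{2(q-1)+p\MM}$) is $\HS$ in grading $\alpha-2(q-1)-q\MM$, and the hypotheses $|\alpha|\geq 4q$, $\alpha^G\geq 2q$ make both the total degree and the fixed degree of that grading strictly positive, so the piece is zero by the description of $\HS$ in Remark~\ref{rem: cohomology of spheres}; no genuine injectivity of $\xi$-multiplication is needed, and the same vanishing disposes of your base case (which is exactly how the paper handles $q=0$, and its $n=0$ case). The remaining range checks you assert do close: for $n\geq p-q+1$ the conditions at $(p,q-1)$ coincide with yours, and at $n=p-q$ the conditions at $(p-1,q)$, namely $|\alpha|\geq 4(q-1)$ and $\alpha^G\geq 2(q-1)$, are implied by yours. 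In sum, the paper's approach buys less bookkeeping (no sphere term, no case switch between the two cofiber sequences), while yours reuses only the restriction sequences already established for the additive computation; to turn your sketch into a complete proof you need only write out the boundary and base-case degree computations indicated above.
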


\begin{proof}
We assume that $p\geq q$, the case $p < q$ following by symmetry.
We will proceed by induction, so we first establish the base cases.

When $q = 0$, the element $\cwt$ is invertible, so its cancellation property is clear.
For $\cxwt$, the only case we need to consider is when $n \geq p$. The generators in
gradings $n\omega+RO(C_2)$ are then
\[
 \{ \cwt^n, \cwt^{n-1}\cw, \ldots, \cwt^{n-p+1}\cw^{p-1} \},
\]
with gradings
\[
 n\omega - 2n, n\omega - 2(n-1), \ldots, n\omega - 2(n-p+1) = n\omega + 2(p-n-1).
\]
So if $x$ has grading $n\omega+a+b\sigma$ with $n\geq p$, $a\geq 2(p-n)$, and $a+b\geq 2(p-n)$,
then $x$ lives in a 0 group, hence $x=0$ and the cancellation property of $\cxwt$ is trivially true.

Because we are assuming $p \geq q$, the case $p=0$ is covered by the case $q=0$.
We now proceed by induction on $p+q$, so we assume that $p\geq 1$, $q\geq 1$,
and the cancellation properties are true
for all $\Xpq{p'}{q'}$ with $p'+q' < p+q$.

Consider first the cancellation property of $\cwt$.
If $j\colon \Xpq{p}{(q-1)}\to \Xpq{p}{q}$ is the inclusion, we have
the following commutative diagram:
\[
 \xymatrix{
 	\Mackey H_{C_2}^{(n+1)\omega+(a-2)+(b-2)\sigma}(\Xpq{p}{(q-1)}_+)
	 \ar[r]^-{j_!} \ar[d]_{\cwt\cdot}
	 & \Mackey H_{C_2}^{n\omega+a+b\sigma}(\Xpq{p}{q}_+) \ar[d]^{\cwt\cdot} \\
	\Mackey H_{C_2}^{(n+2)\omega+(a-4)+(b-2)\sigma}(\Xpq{p}{(q-1)}_+) \ar[r]_-{j_!}
	 & \Mackey H_{C_2}^{(n+1)\omega+(a-2)+b\sigma}(\Xpq{p}{q}_+)\,.
 }
\]
We know from Corollary~\ref{cor:pushforward}
that the top map is an isomorphism if $n\leq 0$, $a > 0$, and $a+b > -2n$,
while the bottom map is a monomorphism for $n\leq -1$.
Further, we are assuming that the map on the left is a monomorphism if either
$n\leq -q$, $a \geq 2q$, and $a+b\geq 2(q-n)$, or
$-q < n\leq p-q$, $a\geq 2q$, and $a+b\geq 4q$.
Because we are assuming that $q \geq 1$,
it follows that $j_!(\cwt\cdot-)$, hence $\cwt j_!(-)$, is a monomorphism if
$n \leq -q$, $a\geq 2q$, and $a+b\geq 2(q-n)$.
Because the $j_!$ along the top is an isomorphism in this range, it follows
that multiplication by $\cwt$ on the right of the diagram is a monomorphism in this range.

That multiplication by $\cwt$ is a monomorphism in the range
$-q < n < 0$, $a\geq 2q$, and $a+b \geq 4q$ follows similarly.

Now consider the following diagram:
\[
 \xymatrix{
	\Mackey H_{C_2}^{(n-1)\omega+(a+2)+b\sigma}(\Xpq{(p-1)}{q}_+)
		\ar[r]^-{i_!} \ar[d]_{\cwt\cdot}
	 & \Mackey H_{C_2}^{n\omega+a+b\sigma}(\Xpq{p}{q}_+) \ar[d]^{\cwt\cdot} \\
	\Mackey H_{C_2}^{n\omega+a+b\sigma}(\Xpq{(p-1)}{q}_+) \ar[r]_{i_!}
	 & \Mackey H_{C_2}^{(n+1)\omega+(a-2)+b\sigma}(\Xpq{p}{q}_+)\,.
 }
\]
We know from Corollary~\ref{cor:pushforward}
that the top map is an isomorphism if $n\geq 0$, $a > -2n$, and $a+b > -2n$,
and that the bottom map is a monomorphism for $n\geq -1$.
We are assuming that the map on the left is a monomorphism for
$-q+1 < n \leq p-q$, $a \geq 2q-2$, and $a+b \geq 4q-2$.
It follows that $i_!(\cwt\cdot-)$ is a monomorphism for
$0 < n \leq p-q$, $a \geq 2q-2$, and $a+b\geq 4q-2$.
Because the $i_!$ along the top is an isomorphism in this range, it follows that
multiplication by $\cwt$ on the right of the diagram is a monomorphism in this range,
which completes the inductive argument that multiplication by $\cwt$ is
a monomorphism in the ranges stated in the proposition.

For the case of multiplication by $\cxwt$, we look at the following two diagrams:
\[
 \xymatrix{
 	\Mackey H_{C_2}^{(n+1)\omega+(a-2)+(b-2)\sigma}(\Xpq{p}{(q-1)}_+)
	 \ar[r]^-{j_!} \ar[d]_{\cxwt\cdot}
	 & \Mackey H_{C_2}^{n\omega+a+b\sigma}(\Xpq{p}{q}_+) \ar[d]^{\cxwt\cdot} \\
	\Mackey H_{C_2}^{n\omega+a+b\sigma}(\Xpq{p}{(q-1)}_+) \ar[r]_-{j_!}
	 & \Mackey H_{C_2}^{(n-1)\omega+(a+2)+(b+2)\sigma}(\Xpq{p}{q}_+)
 }
\]
and
\[
 \xymatrix{
	\Mackey H_{C_2}^{(n-1)\omega+(a+2)+b\sigma}(\Xpq{(p-1)}{q}_+)
		\ar[r]^-{i_!} \ar[d]_{\cxwt\cdot}
	 & \Mackey H_{C_2}^{n\omega+a+b\sigma}(\Xpq{p}{q}_+) \ar[d]^{\cxwt\cdot} \\
	\Mackey H_{C_2}^{(n-2)\omega+(a+4)+(b+2)\sigma}(\Xpq{(p-1)}{q}_+) \ar[r]_{i_!}
	 & \Mackey H_{C_2}^{(n-1)\omega+(a+2)+(b+2)\sigma}(\Xpq{p}{q}_+)\,.
 }
\]
In the first diagram, the top map is an isomorphism for
$n\leq 0$, $a > 0$, and $a+b > -2n$, the bottom map is a monomorphism for $n\leq 1$,
and we assume that the left map is a monomorphism for 
$p-q \leq n < p-1$, $a\geq 2(p-n-1)$, and $a+b\geq 4(p-n-1)$.
This all implies that multiplication by $\cxwt$ on the right is a monomorphism in the range
$p-q \leq n < 0$, $a \geq 2(p-n)$, and $a+b \geq 4(p-n)$,

In the second diagram, the top map is an isomorphism for
$n\geq 0$, $a > -2n$, and $a+b > -2n$, the bottom map is a monomorphism for $n\geq -1$,
and we assume that the left map is a monomorphism for either
$n \geq p$, $a\geq 2(p-n)$, and $a+b \geq 2(p-n)$, or
$-q + 1 \leq n < p$, $a \geq 2(p-n)-2$, and $a+b \geq 4(p-n)-2$.
This all implies that multiplication by $\cxwt$ on the right is a monomorphism in the ranges
$n\geq p$, $a \geq 2(p-n)$, and $a+b\geq 2(p-n)$, or
$0 \leq n < p$, $a \geq 2(p-n)$, and $a+b \geq 4(p-n)$,
completing the inductive argument that multiplication by $\cxwt$ is a monomorphism
in the ranges stated in the proposition.
%OldSean: maybe we can switch these two paragraphs?
%Steve: Done. Don't know why they were "backwards" to begin with.
\end{proof}

%[Now follow as in the original submission with the discussion after Proposition~4.2.12 followed by Proposition~4.2.13.]

The following result gives additional relations in the cohomology of $\Xpq pq$,
enough to conclude that the relations given in Theorem~\ref{thm:multiplicative} suffice to determine
the whole multiplicative structure.

\begin{proposition}
\label{prop:relations}
The
following relations in $\Mackey H_{C_2}^{RO(\Pi B)}(\Xpq{p}{q}_+)$ hold for all $k$ and $\ell$
and follow from those stated in Theorem~\ref{thm:multiplicative}.
\begin{align*}
	\cxwt^{-k}c_{\omega}^{p+1} 
		&= e^2\cxwt^{-(k+1)}c_{\omega}^p + \xi \cxwt^{-(k+2)}c_{\omega}^p c_{\chiw} \\
	\cwt^{-k}c_{\chiw}^{q+1} 
		&= e^2 \cwt^{-(k+1)} c_{\chiw}^q + \xi \cwt^{-(k+2)} c_{\omega} c_{\chiw}^q  \\
	\cxwt^{-k}c_{\omega}^m c_{\chiw}^n &= 0 & m &\geq p \text{ and } n \geq q \\
	\cwt^{-k}c_{\omega}^m c_{\chiw}^n &= 0 & m &\geq p \text{ and } n \geq q \\
	\cxwt\cdot \cxwt^{-k}c_{\omega}^m c_{\chiw}^n 
		&= \cxwt^{-(k-1)}c_{\omega}^m c_{\chiw}^n & m &\geq p \\
	\cwt\cdot \cxwt^{-k}c_{\omega}^m c_{\chiw}^n  
		&= \xi \cxwt^{-(k+1)}c_{\omega}^m c_{\chiw}^n & m &\geq p \\
	\cxwt^{-k}c_{\omega}^m c_{\chiw}^n \cdot \cxwt^{-\ell}c_{\omega}^s c_{\chiw}^t
		&= \cxwt^{-k-\ell}c_{\omega}^{m+s} c_{\chiw}^{n+t} & m, s &\geq p \\
	\cxwt\cdot \cwt^{-k}c_{\omega}^m c_{\chiw}^n 
		&= \xi \cwt^{-(k+1)}c_{\omega}^m c_{\chiw}^n & n &\geq q \\
	\cwt\cdot \cwt^{-k}c_{\omega}^m c_{\chiw}^n  
		&= \cwt^{-(k-1)}c_{\omega}^m c_{\chiw}^n & n &\geq q \\
	\cwt^{-k}c_{\omega}^m c_{\chiw}^n \cdot \cwt^{-\ell}c_{\omega}^s c_{\chiw}^t
		&= \cwt^{-k-\ell}c_{\omega}^{m+s} c_{\chiw}^{n+t} & n, t &\geq q \\
	\cxwt^{-k}c_{\omega}^m c_{\chiw}^n \cdot \cwt^{-\ell}c_{\omega}^s c_{\chiw}^t
	 	&= 0 & m &\geq p \text{ and } t\geq q
\end{align*}
where we write
\begin{align*}
 \cxwt^{-k} c_{\omega}^m c_{\chiw}^n 
 	&:= (\cxwt^{-k}c_{\omega}^p) c_{\omega}^{m-p}c_{\chiw}^n
	&& m \geq p \text{ and } k \geq 1 \\
\intertext{and}
 \cwt^{-k} c_{\omega}^m c_{\chiw}^n 
 	&:= (\cwt^{-k}c_{\chiw}^q) c_{\omega}^{m}c_{\chiw}^{n-q}
	&& n \geq q \text{ and } k \geq 1.
\end{align*}
\end{proposition}

%They can be used to construct an algorithm for writing any monomial in the generators in terms of some preferred basis.%We should discuss this with steve. Should we remove it?
%I've tried to make it a little clearer. What do you think? --Steve
As mentioned, the relations listed above determine the complete multiplicative structure of $\Mackey H_{C_2}^{RO(\Pi B)}(\Xpq{p}{q}_+)$,
as they can be used to rewrite any expression in the generators in terms of the additive basis given in
Proposition~\ref{prop:additivestructure}.
For example, the relation 
$\cxwt^{-k}c_{\omega}^{p+1} = e^2\cxwt^{-(k+1)}c_{\omega}^p + \xi \cxwt^{-(k+2)}c_{\omega}^p c_{\chiw}$
gives us the equality
\[
 \cxwt^{-k} c_{\omega}^{p+1} c_{\chiw}^n
  = e^2\cxwt^{-(k+1)}c_{\omega}^p c_{\chiw}^n + \xi \cxwt^{-(k+2)}c_{\omega}^p c_{\chiw}^{n+1}.
\]
By induction, we can then show that
\begin{eqnarray}
\label{eqn:relation}
 \cxwt^{-k} c_{\omega}^{p+m} c_{\chiw}^n
  = \sum_{\ell=0}^m \binom{m}{\ell} e^{2(m-\ell)}\xi^\ell \cxwt^{-(k+m+\ell)}
  	c_{\omega}^p c_{\chiw}^{n+\ell},
\end{eqnarray}
%a linear combination of basis elements,
%basis is mentioned here how should it be modified blech
with the understanding that any terms with $n+\ell \geq q$ vanish.
With relations like these, we can rewrite any expression in terms of monomials in $\cw$ and $\cxw$ with exponents not exceeding
$p$ and $q$, respectively.

\begin{proof}[Proof of Proposition \ref{prop:relations}]
We will frequently make use of Proposition~\ref{prop: cancellation} throughout the proof.
Start with the claim that 
$\cxwt\cdot \cxwt^{-k}c_{\omega}^m c_{\chiw}^n = \cxwt^{-(k-1)}c_{\omega}^m c_{\chiw}^n$
for $m\geq p$. Consider the special case
$\cxwt \cdot \cxwt^{-k}c_{\omega}^p = \cxwt^{-(k-1)}c_{\omega}^p$ first.
We have
\[
 \cxwt^{k-1}(\cxwt \cdot \cxwt^{-k}c_{\omega}^p) = \cxwt^k \cdot \cxwt^{-k}c_{\omega}^p
 = c_{\omega}^p
\]
by definition, so we must have $\cxwt \cdot \cxwt^{-k}c_{\omega}^p = \cxwt^{-(k-1)}c_{\omega}^p$
as claimed, by the uniqueness of $\cxwt^{-(k-1)}c_{\omega}^p$.
The general case follows: If $m\geq p$, then
\begin{align*}
 \cxwt\cdot \cxwt^{-k}c_{\omega}^m c_{\chiw}^n
 	&= (\cxwt \cdot \cxwt^{-k}c_{\omega}^p) c_{\omega}^{m-p} c_{\chiw}^n \\
	&= (\cxwt^{-(k-1)} c_{\omega}^p) c_{\omega}^{m-p} c_{\chiw}^n \\
	&= \cxwt^{-(k-1)}c_{\omega}^m c_{\chiw}^n.
\end{align*}
The calculation of $\cwt\cdot \cwt^{-k}c_{\omega}^m c_{\chiw}^n$ is the same.

This gives us the easy calculation
\[
 \cwt\cdot \cxwt^{-k}c_{\omega}^m c_{\chiw}^n
  = \cwt\cxwt \cdot \cxwt^{-(k+1)} c_{\omega}^m c_{\chiw}^m
  = \xi \cxwt^{-(k+1)} c_{\omega}^m c_{\chiw}^m
\]
and similarly for $\cxwt\cdot \cwt^{-k}c_{\omega}^m c_{\chiw}^n$.

For the calculation of $\cxwt^{-k}c_{\omega}^{p+1}$, we have
\begin{align*}
 \cxwt^{-k}c_{\omega}^{p+1}
 	&= (\cxwt c_{\omega})\cxwt^{-(k+1)}c_{\omega}^p \\
	&= (e^2 + (1-\kappa)\cwt c_{\chiw})\cxwt^{-(k+1)}c_{\omega}^p \\
	&= e^2 \cxwt^{-(k+1)}c_{\omega}^p + (1-\kappa)\xi \cxwt^{-(k+2)} c_{\omega}^p c_{\chiw} \\
	&= e^2 \cxwt^{-(k+1)}c_{\omega}^p + \xi \cxwt^{-(k+2)} c_{\omega}^p c_{\chiw}.
\end{align*}
The calculation of $\cwt^{-k}c_{\chiw}^{q+1}$ is the same, {\it mutatis mutandis}.

Now consider the claim that $\cxwt^{-k}c_{\omega}^m c_{\chiw}^n = 0$ for
$m\geq p$ and $n \geq q$. It follows easily if $k \leq 0$, so the interesting case is $k > 0$.
Consider first $\cxwt^{-k} c_{\omega}^p c_{\chiw}^q$
and proceed by induction on $k$, the case $k=0$ being already known.
Inductively, we have that
\[
 \cxwt \cdot \cxwt^{-k} c_{\omega}^p c_{\chiw}^q
  = \cxwt^{-(k-1)}c_{\omega}^p c_{\chiw}^q = 0,
\]
so we need to check if we are in a grading where
we can cancel $\cxwt$ and conclude that $\cxwt^{-k} c_{\omega}^p c_{\chiw}^q = 0$.
This element lives in grading
$(p-q+k)\omega + 2q + 2(q-k)\sigma$.
With $n = p-q+k$ and $\alpha = 2q + 2(q-k)\sigma$, we see that
\begin{align*}
	|\alpha| &= 4q - 2k = 4(p-n) + 2k \\
\intertext{and}
	\alpha^{C_2} &= 2q = 2(p-n) + 2k.
\end{align*}
If $0 < k \leq q$, so that $p-q < n \leq p$, we have $|\alpha| > 4(p-n)$ and $\alpha^{C_2} > 2(p-n)$.
If $k > q$, then $n > p$ and we have
\[
 |\alpha| = 4(p-n) + 2k = 2(p-n) + 2(p-n+k) = 2(p-n) + 2q > 2(p-n)
\]
and again
\[
 \alpha^{C_2} = 2(p-n) + 2k > 2(p-n).
\]
Thus, for any $k > 0$, we are in a grading where $\cxwt$ can be canceled and we conclude that
$\cxwt^{-k} c_{\omega}^p c_{\chiw}^q = 0$. The general statement then follows for 
$m\geq p$ and $n\geq q$:
\[
 \cxwt^{-k}c_{\omega}^m c_{\chiw}^n
 	= (\cxwt^{-k} c_{\omega}^p c_{\chiw}^q)c_{\omega}^{m-p} c_{\chiw}^{n-q}
	= 0.
\]
The proof that $\cwt^{-k}c_{\omega}^m c_{\chiw}^n = 0$ for $m\geq p$ and $n\geq q$ is similar.

To calculate $\cxwt^{-k}c_{\omega}^m c_{\chiw}^n \cdot \cxwt^{-\ell}c_{\omega}^s c_{\chiw}^t$,
$m\geq p$ and $s\geq p$, it suffices to consider the special case
$\cxwt^{-k}c_{\omega}^p \cdot \cxwt^{-\ell}c_{\omega}^p$.
For this we proceed by induction on $k$, the case of $k=0$ being known by definition.
We have
\begin{align*}
 \cxwt \cdot (\cxwt^{-k}c_{\omega}^p \cdot \cxwt^{-\ell}c_{\omega}^p)
	&= \cxwt^{-(k-1)}c_{\omega}^p \cdot \cxwt^{-\ell}c_{\omega}^p \\
	&= \cxwt^{-(k+\ell-1)} c_{\omega}^{2p} \\
	&= \cxwt \cdot \cxwt^{-(k+\ell)} c_{\omega}^{2p}
\end{align*}
by induction, so we need to check that we are in a grading where we can cancel $\cxwt$.
The grading is $(2p+k+\ell)\omega - 2(k+\ell)\sigma$. With $n = 2p + k + \ell$ and
$\alpha = -2(k+\ell)\sigma$, we have $n>p$, 
\[
	|\alpha| = -2(k+\ell) = 2(2p-n) \geq 2(p-n),
\]
and $\alpha^{C_2} = 0 > 2(p-n)$. Thus we are in a grading where we can cancel $\cxwt$, so we conclude that
$\cxwt^{-k}c_{\omega}^p \cdot \cxwt^{-\ell}c_{\omega}^p = \cxwt^{-(k+\ell)}c_{\omega}^{2p}$,
and the general case follows.
The calculation of 
$\cwt^{-k}c_{\omega}^m c_{\chiw}^n \cdot \cwt^{-\ell}c_{\omega}^s c_{\chiw}^t$
for $n\geq q$ and $t\geq q$ is similar.

Finally, consider
$\cxwt^{-k}c_{\omega}^m c_{\chiw}^n \cdot \cwt^{-\ell}c_{\omega}^s c_{\chiw}^t$
for $m\geq p$ and $t\geq q$, for which it suffices to consider the case
$\cxwt^{-k}c_{\omega}^p \cdot \cwt^{-\ell}c_{\chiw}^q$ with $k\geq 0$ and $\ell \geq 0$.
We proceed by induction on $k+\ell$, the base case $k+\ell = 0$ being known.
We want to take advantage of either
\begin{align*}
	\cxwt (\cxwt^{-k}c_{\omega}^p \cdot \cwt^{-\ell}c_{\chiw}^q)
		&= \cxwt^{-(k-1)}c_{\omega}^p \cdot \cwt^{-\ell}c_{\chiw}^q
		= 0 \\
\intertext{or}
	\cwt (\cxwt^{-k}c_{\omega}^p \cdot \cwt^{-\ell}c_{\chiw}^q)
		&= \cxwt^{-k}c_{\omega}^p \cdot \cwt^{-(\ell-1)}c_{\chiw}^q
		= 0
\end{align*}
to conclude that $\cxwt^{-k}c_{\omega}^p \cdot \cwt^{-\ell}c_{\chiw}^q = 0$.
We first note that
$\cxwt^{-k}c_{\omega}^p \cdot \cwt^{-\ell}c_{\chiw}^q$
lives in grading $(p-q+k-\ell)\omega + 2(q+\ell) + 2(q-k)\sigma$,
so let $n = p-q+k-\ell$ and $\alpha = 2(q+\ell) + 2(q-k)\sigma$.

There are several cases to consider. First, suppose that $k-\ell \geq q$ so that $n\geq p$.
We have
\begin{align*}
	|\alpha| &= 2(2q-k+\ell) = 2(p-n+q) \geq 2(p-n) \\
\intertext{and}
	\alpha^{C_2} &= 2(q+\ell) = 2(p-n+k) \geq 2(p-n),
\end{align*}
so we are in a grading where 
$\cxwt (\cxwt^{-k}c_{\omega}^p \cdot \cwt^{-\ell}c_{\chiw}^q) = 0$ implies that the second factor is trivial.

If $0 \leq k-\ell \leq q$, so $p-q \leq n \leq p$ and $q \geq p-n$, then
\begin{align*}
	|\alpha| &= 2(p-n+q) \geq 4(p-n) \\
\intertext{and}
	\alpha^{C_2} &= 2(q+\ell) = 2(p-n+k) \geq 2(p-n),
\end{align*}
so we can again conclude that $\cxwt^{-k}c_{\omega}^p \cdot \cwt^{-\ell}c_{\chiw}^q = 0$.

If $-p \leq k - \ell \leq 0$, so $-q \leq n \leq p-q$ and $p-n \geq q$, then
\begin{align*}
	|\alpha| &= 2(p-n+q) \geq 4q \\
\intertext{and}
	\alpha^{C_2} &= 2(q+\ell) \geq 2q,
\end{align*}
so we can conclude from
$\cwt (\cxwt^{-k}c_{\omega}^p \cdot \cwt^{-\ell}c_{\chiw}^q) = 0$
that $\cxwt^{-k}c_{\omega}^p \cdot \cwt^{-\ell}c_{\chiw}^q = 0$.

If $k - \ell \leq -p$, so $n \leq -q$, then
\begin{align*}
	|\alpha| &= 2(p-n+q) \geq 2(q-n) \\
\intertext{and}
	\alpha^{C_2} &= 2(q+\ell)  \geq 2q,
\end{align*}
so we can again conclude that $\cxwt^{-k}c_{\omega}^p \cdot \cwt^{-\ell}c_{\chiw}^q = 0$.
\end{proof}

%%%%%%%%%%%%%%%%%%%%%%%%%%%%%%%%%%%%%%%%%%%%%%%%%%%%%%%%%%%%%%%%%%%%%%%%%%%%%%%%%%%%%%%%%%%%%%%%%%%%%%%%%%%%%%%%%%%%%%%%%%%%%
%%%%%%%%%%%%%%%%%%%%%%%%%%%%%%%%%%%%%%%%%%%%%%%%%%%%%%%%%%%%%%%%%%%%%%%%%%%%%%%%%%%%%%%%%%%%%%%%%%%%%%%%%%%%%%%%%%%%%%%%%%%%%
%%%%%%%%%%%%%%%%%%%%%%%%%%%%%%%%%%%%%%%%%%%%%%%%%%%%%%%%%%%%%%%%%%%%%%%%%%%%%%%%%%%%%%%%%%%%%%%%%%%%%%%%%%%%%%%%%%%%%%%%%%%%%
%%%%%%%%%%%%%%%%%%%%%%%%%%%%%%%%%%%%%%%%%%%%%%%%%%%%%%%%%%%%%%%%%%%%%%%%%%%%%%%%%%%%%%%%%%%%%%%%%%%%%%%%%%%%%%%%%%%%%%%%%%%%%
%%%%%%%%%%%%%%%%%%%%%%%%%%%%%%%%%%%%%%%%%%%%%%%%%%%%%%%%%%%%%%%%%%%%%%%%%%%%%%%%%%%%%%%%%%%%%%%%%%%%%%%%%%%%%%%%%%%%%%%%%%%%%
%%%%%%%%%%%%%%%%%%%%%%%%%%%%%%%%%%%%%%%%%%%%%%%%%%%%%%%%%%%%%%%%%%%%%%%%%%%%%%%%%%%%%%%%%%%%%%%%%%%%%%%%%%%%%%%%%%%%%%%%%%%%%
%%%%%%%%%%%%%%%%%%%%%%%%%%%%%%%%%%%%%%%%%%%%%%%%%%%%%%%%%%%%%%%%%%%%%%%%%%%%%%%%%%%%%%%%%%%%%%%%%%%%%%%%%%%%%%%%%%%%%%%%%%%%%
%%%%%%%%%%%%%%%%%%%%%%%%%%%%%%%%%%%%%%%%%%%%%%%%%%%%%%%%%%%%%%%%%%%%%%%%%%%%%%%%%%%%%%%%%%%%%%%%%%%%%%%%%%%%%%%%%%%%%%%%%%%%%
%%%%%%%%%%%%%%%%%%%%%%%%%%%%%%%%%%%%%%%%%%%%%%%%%%%%%%%%%%%%%%%%%%%%%%%%%%%%%%%%%%%%%%%%%%%%%%%%%%%%%%%%%%%%%%%%%%%%%%%%%%%%%

\section{Related results}\label{sec: leftovers}

In this section, we collect remarks of interest.
The first subsection includes the computations of the cohomology of $\Xpq{p}{q}$ when one of $p$ or $q$ is finite and the other infinite.
Then we discuss the $RO(C_2)$-graded subring of the cohomology and cohomology with coefficients in other Mackey functors.

\subsection{The case $p=\infty$ or $q=\infty$}
\label{subsec infinite cases}
Consider $\Xpq{\infty}{q}$ with $q < \infty$. This space can be written as
\[
 \Xpq{\infty}{q} = \colim_p \Xpq{p}{q}
\]
and we can consider the resulting limit of cohomology groups.
It follows from Proposition \ref{prop:additivestructure} that the restriction
$\Mackey H_{C_2}^{n\omega+RO(C_2)}(\Xpq{p}{q}_+) \to \Mackey H_{C_2}^{n\omega+RO(C_2)}(\Xpq{p-1}{q}_+)$
is an isomorphism in a range of degrees that expands as $p$ increases. 
From this, and similar considerations for $\Xpq{p}{\infty}$, 
we can conclude the following result.

\begin{theorem}
Let $0 \leq q < \infty$. Then $\Mackey H_{C_2}^{RO(\Pi B)}(\Xpq{\infty}{q}_+)$ is generated, 
as an algebra over $\Mackey H_{C_2}^{RO(C_2)}(S^0)$,
 by the elements
$c_{\omega}$, $c_{\chiw}$, $\cwt$, and $\cxwt$,
together with the following elements: 
$c_{\chiw}^q$ is infinitely divisible by $\cwt$, meaning that, for $k\geq 1$,
there are unique elements $\cwt^{-k}c_{\chiw}^q$ such that
\[
	\cwt^k \cdot \cwt^{-k} c_{\chiw}^q = c_{\chiw}^q.
\]
The generators satisfy the following relations:
\begin{align*}
	\cwt c_{\chiw} - (1-\kappa)\cxwt c_{\omega} &= e^2 \qquad\text{and} \\
	\cxwt \cwt &= \xi.
\end{align*}
The element $\cwt$ has the same cancellation property
as given in Proposition \ref{prop: cancellation}:
For an element $x \in \Mackey H_{C_2}^{n\omega + \alpha}(\Xpq{p}{q}_+)$ with 
$n \in \Z$ and $\alpha\in RO(C_2)$,
we have
\begin{align*}
 \cwt x &= 0 \implies x = 0
 	&&\text{if }  n \leq -q,\ |\alpha| \geq 2(q-n), \text{ and } \alpha^{C_2} \geq 2q \\
	&&&\text{or if }  -q \leq n,\ |\alpha| \geq 4q, \text{ and } \alpha^{C_2} \geq 2q.
\end{align*}
We define
\begin{align*}
 \cwt^{-k} c_{\omega}^m c_{\chiw}^n 
 	&:= (\cwt^{-k}c_{\chiw}^q) c_{\omega}^{m}c_{\chiw}^{n-q}
	&& n \geq q \text{ and } k \geq 1.
\end{align*}
Additively, $\Mackey H_{C_2}^{RO(\Pi B)}((\Xpq{\infty}{q})_+)$ is a free
$\Mackey H_{C_2}^{RO(C_2)}(S^0)$-module on the following set of generators:
%The basis here has been changed in the same was as in the main theorem

\begin{alignat*}{2}
	&\{ \cwt^k c_{\omega}^m &&\mid  0<k,\  0\leq m \} \\
	\union &\{ \cxwt^k c_{\chiw}^n &&\mid 0<k,\ 0 \leq n < q,\ 0<k+n \} \\
	\union &\{ c_{\omega}^m c_{\chiw}^n &&\mid  0\leq m,\ 0 \leq n \leq q \} \\
	\union &\{ \cxwt c_{\omega}^m c_{\chiw}^n &&\mid  1\leq m,\ 0 \leq n < q,\ n+1 \leq m \} \\
	\union &\{ \cwt c_{\omega}^m c_{\chiw}^n &&\mid 0 \leq m,\ 2 \leq n \leq q,\ m+1 < n \} \\
	\union &\{ \cwt^{-k} c_{\omega}^m c_{\chiw}^q &&\mid 1\leq k,\ 0 \leq m \}.
\end{alignat*}

Similarly, if $p < \infty$, then $\Mackey H_{C_2}^{RO(\Pi B)}(\Xpq{p}{\infty}_+)$ is generated, 
as an algebra over $\Mackey H_{C_2}^{RO(C_2)}(S^0)$,
 by the elements
$c_{\omega}$, $c_{\chiw}$, $\cwt$, and $\cxwt$,
together with the following elements: 
$c_{\omega}^p$ is infinitely divisible by $\cxwt$, meaning that, for $k\geq 1$,
there are unique elements $\cxwt^{-k}c_{\omega}^p$ such that
\[
	\cxwt^k \cdot \cxwt^{-k} c_{\omega}^p = c_{\omega}^p.
\]
The generators satisfy the following relations:
\begin{align*}
	\cwt c_{\chiw} - (1-\kappa)\cxwt c_{\omega} &= e^2 \qquad\text{and} \\
	\cxwt \cwt &= \xi.
\end{align*}
The element $\cxwt$ has the same cancellation property as given in Proposition \ref{prop: cancellation}:
\begin{align*}
 \cxwt x &= 0 \implies x = 0
 	&&\text{if }  n\geq p,\ |\alpha| \geq 2(p-n), \text{ and } \alpha^{C_2} \geq 2(p-n) \\
	&&&\text{or if }  n \leq p,\ |\alpha| \geq 4(p-n), \text{ and } \alpha^{C_2} \geq 2(p-n).
\end{align*}

We define
\begin{align*}
 \cxwt^{-k} c_{\omega}^m c_{\chiw}^n 
 	&:= (\cxwt^{-k}c_{\omega}^p) c_{\omega}^{m-p}c_{\chiw}^{n}
	&& m \geq p \text{ and } k \geq 1.
\end{align*}

Additively, $\Mackey H_{C_2}^{RO(\Pi B)}(\Xpq{p}{\infty}_+)$ is a free
$\Mackey H_{C_2}^{RO(C_2)}(S^0)$-module on the following set of generators:
%the basis has again been changed to match the main theorem
\begin{alignat*}{2}
	&\{ \cwt^k c_{\omega}^m &&\mid  0<k,\ 0 \leq m < p \} \\
	\union &\{ \cxwt^k c_{\chiw}^n &&\mid 0<k,\ 0 \leq n,\  0<k+n \} \\
	\union &\{ c_{\omega}^m c_{\chiw}^n &&\mid 0 \leq m \leq p,\ 0\leq n \} \\
	\union &\{ \cxwt c_{\omega}^m c_{\chiw}^n &&\mid 1 \leq m \leq p,\ 0 \leq n,\ n+1 \leq m \} \\
	\union &\{ \cwt c_{\omega}^m c_{\chiw}^n &&\mid 0 \leq m < p,\ 2 \leq n,\ m+1 < n \} \\
	\union &\{ \cxwt^{-k} c_{\omega}^p c_{\chiw}^n &&\mid 1\leq k,\ 0 \leq n \}.
\end{alignat*}
\qed
\end{theorem}

Note that, in these cases, we do not have a vanishing result along the lines
of $c_{\omega}^p c_{\chiw}^q = 0$. However, we do still have relations like
\[
	c_{\omega}^{p+1} = e^2\cxwt^{-1}c_{\omega}^p + \xi \cxwt^{-2}c_{\omega}^p c_{\chiw}
\]
in $\Mackey H_{C_2}^{RO(\Pi B)}(\Xpq{p}{\infty}_+)$ that limit the powers we need to work with of one of the Euler classes.

\subsection{The $RO(C_2)$-graded subring}
\label{subsec: RO(G) part}
In \cite[Theorem~5.1]{LewisCP}, Lewis calculated the $RO(C_2)$-graded cohomology of
both the infinite and finite complex projective spaces. We here relate our results to his in the finite case.

Changing Lewis's notation slightly so as not to clash with ours, his result 
for finite projective spaces can be stated as follows.

\begin{theorem}[Lewis]
Assume that $q \leq p < \infty$. Then $\Mackey H_{C_2}^{RO(C_2)}(\Xpq{p}{q}_+)$ is generated
by an element $\gamma$ in grading $2\sigma$ and elements $\Gamma(k)$, $1\leq k < p$,
in gradings $2k + 2\min(k,q)\sigma$. These generators satisfy the relations
\begin{align*}
	\gamma^2 &= e^2\gamma + \xi\Gamma(1) \\
	\gamma \Gamma(k) &= \xi\Gamma(k+1) \qquad \text{for } k\geq q \\
	\Gamma(j)\Gamma(k) &=
		\begin{cases}
			\Gamma(j+k) & \text{for } j + k \leq q \\
			\sum_{i=0}^{\bar\jmath+\bar k-q} \binom{\bar\jmath+\bar k-q}{i} e^{2(\bar\jmath+\bar k-q-i)}\xi^i \Gamma(j+k+i)
				& \text{for } j + k > q
		\end{cases}
\end{align*}
where $\bar\jmath = \min(j,q)$, $\bar k = \min(k,q)$, and we interpret $\Gamma(n) = 0$ if $n \geq p$.
\qed
\end{theorem}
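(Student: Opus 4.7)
The plan is to derive Lewis's theorem by identifying his generators as specific elements of our $RO(\Pi B)$-graded description and verifying his relations using the multiplicative structure from Theorem~A and Proposition~\ref{prop:relations}. First I would match generators: the element $\gamma$ in grading $\MM$ is a unit multiple of $c_{\omega-2}c_{\chi\omega}$ (equivalently, of $e^2 + (1-\kappa)c_{\chi\omega-2}c_\omega$ by the defining relation~(\ref{rel:tensor product relation})), with the precise unit pinned down by a direct check against one of Lewis's relations. For $1 \leq k \leq q$ I would take $\Gamma(k)$ to be a unit multiple of $c_\omega^k c_{\chi\omega}^k$, which lies in grading $2k + k\MM$. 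For $q < k < p$, the generator $\Gamma(k)$ in grading $2k + q\MM$ is identified with the divided class $c_{\omega-2}^{-(k-q)}c_\omega^k c_{\chi\omega}^q$ from Proposition~\ref{prop:relations}; the vanishing $\Gamma(k) = 0$ for $k \geq p$ then follows from $c_\omega^p c_{\chi\omega}^q = 0$.

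With the generators fixed, Lewis's relations follow from algebraic manipulations using~(\ref{rel:tensor product relation}), $c_{\chi\omega-2}c_{\omega-2} = \xi$, and $c_\omega^p c_{\chi\omega}^q = 0$, together with the auxiliary identities $\kappa\xi = 0$ and $(1-\kappa)^2 = 1$ extracted from Theorem~\ref{thm: infinite case}. The squaring relation $\gamma^2 = e^2\gamma + \xi\Gamma(1)$ is a direct calculation in which $c_{\chi\omega-2}c_{\omega-2}$ is replaced by $\xi$ and factors of $(1-\kappa)$ are absorbed via $(1-\kappa)\xi = \xi$. The relation $\gamma\Gamma(k) = \xi\Gamma(k+1)$ for $k \geq q$ follows from the divided-power identities in Proposition~\ref{prop:relations}, after commuting $\gamma$ into $\Gamma(k)$ and reindexing. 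The most involved computation is the product formula for $\Gamma(j)\Gamma(k)$ when $j + k > q$; here I would apply the iteration~(\ref{eqn:relation}) to rewrite $c_\omega^{j+k}c_{\chi\omega}^{j+k}$ as a binomial sum of divided classes, which matches precisely Lewis's sum after reindexing the summands as $\Gamma(j+k+i)$.

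Finally, I would verify completeness: that these generators exhaust the $RO(G)$-graded subring. Theorem~\ref{thm: additive description} presents the cohomology as a free $\Mackey H_G^{RO(G)}(S^0)$-module with generators indexed by $D_{p,q}$; restriction to $RO(G)$-gradings amounts to intersecting $D_{p,q}$ with the plane $P_0$. A direct inspection of the lines $L_i^{[p,q]} \cap P_0$ shows that the proposed monomials in $\gamma$ and the $\Gamma(k)$ produce exactly one basis element per grading, confirming completeness. The main obstacle throughout is coefficient bookkeeping: pinning down the precise unit multiples for $\gamma$ and each $\Gamma(k)$ so that signs and factors of $(1-\kappa)$ cancel correctly, particularly in the binomial sum for $\Gamma(j)\Gamma(k)$; once the identifications are fixed, the verifications become mechanical applications of the three structural identities.
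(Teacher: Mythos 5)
Your overall strategy---identify Lewis's generators with explicit classes in the $RO(\Pi B)$-graded ring, derive his relations from relation~(\ref{rel:tensor product relation}), $\cxwt\cwt=\xi$, $\cw^p\cxw^q=0$ and Proposition~\ref{prop:relations}, and check generation against the additive basis in the plane $P_0$---is essentially the route taken in Section~\ref{subsec: RO(G) part}, where the theorem itself is quoted from \cite{LewisCP} and the relations are then derived from Theorem~A, with the generators pinned down by $f_{p,q}$. However, your identification of $\gamma$ is wrong, and it cannot be repaired by a unit. The paper takes $\gamma=f_{p,q}(\MM)=\cxwt\cw$, whereas you take $\gamma$ to be a unit multiple of $\cwt\cxw=e^2+(1-\kappa)\cxwt\cw$; these two classes differ by the summand $e^2\cdot 1$, so neither is a unit multiple of the other. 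The discrepancy is invisible in the squaring relation---both candidates satisfy $\gamma^2=e^2\gamma+\xi\,\cw\cxw$, so your plan to ``pin down the unit by a direct check against one of Lewis's relations'' can easily pass---but it kills the relation $\gamma\Gamma(k)=\xi\Gamma(k+1)$, which is nonvacuous whenever $q<p$. Indeed, with $\Gamma(k)=\cwt^{-(k-q)}\cw^k\cxw^q$ one computes, using the correct identity $\cxwt\cw\cdot\Gamma(k)=\xi\Gamma(k+1)$ and $(1-\kappa)\xi=\xi$, that $\cwt\cxw\cdot\Gamma(k)=\bigl(e^2+(1-\kappa)\cxwt\cw\bigr)\Gamma(k)=e^2\Gamma(k)+\xi\Gamma(k+1)$, and $e^2\Gamma(k)\neq 0$ because $\Gamma(k)$ is a free $\HS$-module generator by Theorem~\ref{thm: additive description} and Proposition~\ref{prop: step 2}. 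Rescaling by any unit $u$ (including those involving $\epsilon$, using $\epsilon e^2=\kappa\cxwt\cw$ and $\kappa\xi=0$) only changes the error term to $\pm e^2\Gamma(k)$, never to $0$, so no unit multiple of $\cwt\cxw$ satisfies Lewis's second relation; the correct choice is $\gamma=\cxwt\cw$.

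The rest of your outline is sound and parallels the paper: your $\Gamma(k)$ agree with the paper's $\Gamma(k)=\cwt^{-(k-\bar k)}\cw^k\cxw^{\bar k}$, the product formula for $\Gamma(j)\Gamma(k)$ with $j+k>q$ follows from the $\cwt$-analogue of the iterated relation~(\ref{eqn:relation}), and checking generation by inspecting $D_{p,q}\cap P_0$ (where the monomials $1$, $\gamma$, $\Gamma(k)$, $\gamma\Gamma(k)$ account for the $p+q$ module generators) is a reasonable supplement, though the paper does not reprove generation since it cites Lewis for the theorem itself. Once you replace your $\gamma$ by $\cxwt\cw$, the verifications do become the mechanical computations you describe.
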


Proposition \ref{prop:additivestructure} gives us the following basis for $\Mackey H_{C_2}^{RO(C_2)}(\Xpq{p}{q}_+)$ when $p\geq q$:
If $p = q$, we have
\[
F_{p,q}(0)=\left\{\cxwt^\delta\big[\Xpq{p-(l+\delta)}{(q-l)}\big]_{\delta\omega+l(2+2\sigma)}^* \right\}_{\substack{\delta=0,1\\0 \leq l < q}} 
%		 \cup\left\{ \big[\Xq{(q-p+n-l)}\big]_{\nu+2l}^* \right\}_{0 \leq l < q+n-p}
\]
which, using multiplicative generators, can be rewritten as
\[
 \{ 1, \cxwt c_{\omega}, c_{\omega}c_{\chiw}, \cxwt c_{\omega}^2 c_{\chiw},
 	c_{\omega}^2c_{\chiw}^2, \ldots, \cxwt c_{\omega}^{q} c_{\chiw}^{q-1} \}.
\]
If $p > q$, we have
\begin{multline*}
F_{p,q}(0)=\left\{\cxwt^\delta\big[\Xpq{p-(l+\delta)}{(q-l)}\big]_{\delta\omega+l(2+2\sigma)}^* \right\}_{\substack{\delta=0,1\\0 \leq l < q}} \\
		 \cup\left\{ \big[\Xq{(p-q-l)}\big]_{q(2+2\sigma)+2l}^* \right\}_{0 \leq l < p-q}
\end{multline*}
which, using multiplicative generators, can be expressed as
\begin{multline*}
 \{ 1, \cxwt c_{\omega}, c_{\omega}c_{\chiw}, \cxwt c_{\omega}^2 c_{\chiw},
 	c_{\omega}^2c_{\chiw}^2, \ldots, \cxwt c_{\omega}^{q} c_{\chiw}^{q-1}, \\
	c_{\omega}^q c_{\chiw}^q, \cwt^{-1} c_{\omega}^{q+1} c_{\chiw}^q,
	\cwt^{-2} c_{\omega}^{q+2} c_{\chiw}^q, \ldots,
	\cwt^{-(p-q-1)} c_{\omega}^{p-1}c_{\chiw}^q \}.
\end{multline*}
We can match up the two results by identifying
\begin{align*}
  \gamma &= \cxwt c_{\omega} \\
  \Gamma(k) &= 
  	\begin{cases}
		c_{\omega}^k c_{\chiw}^k & \text{for } 1\leq k < q \\
		\cwt^{-(k-q)} c_{\omega}^k c_{\chiw}^q & \text{for } q\leq k < p.
	\end{cases} \\
	&= \cwt^{-(k-\bar k)} c_{\omega}^k c_{\chiw}^{\bar k}.
\end{align*}
Lewis's relations can then be derived from ours:
\begin{align*}
 \gamma^2
 	&= (\cxwt c_{\omega})^2 \\
	&= \cxwt c_{\omega}(e^2 + (1-\kappa)\cwt c_{\chiw}) \\
	&= e^2\cxwt c_{\omega} + \xi c_{\omega} c_{\chiw} \\
	&= e^2\gamma + \xi\Gamma(1)
\end{align*}
using that $(1-\kappa)\xi = \xi$.
For $k\geq q$,
\begin{align*}
 \gamma\Gamma(k)
 	&= \cxwt c_{\omega} \cdot \cwt^{-(k-q)} c_{\omega}^k c_{\chiw}^q \\
	&= \cxwt \cwt \cwt^{-(k-q+1)} c_{\omega}^{k+1} c_{\chiw}^q \\
	&= \xi\Gamma(k+1).
\end{align*}
For $j + k \leq q$,
\begin{align*}
 \Gamma(j)\Gamma(k)
 	&= (c_{\omega}^j c_{\chiw}^j)(c_{\omega}^k c_{\chiw}^k) \\
	&= c_{\omega}^{j+k} c_{\chiw}^{j+k} \\
	&= \Gamma(j+k),
\end{align*}
but, if $j+k \geq q$,
\begin{align*}
 \Gamma(j)\Gamma(k)
 	&= (\cwt^{-(j-\bar\jmath)} c_{\omega}^j c_{\chiw}^{\bar\jmath})
		(\cwt^{-(k-\bar k)} c_{\omega}^k c_{\chiw}^{\bar k}) \\
	&= \cwt^{-(j+k-\bar\jmath-\bar k)} c_{\omega}^{j+k} c_{\chiw}^{\bar\jmath+\bar k} \\
	&= \sum_{i=0}^{\bar\jmath + \bar k - q} \binom{\bar\jmath + \bar k - q}{i}
		e^{2(\bar\jmath+\bar k-q-i)}\xi^i 
		\cwt^{-(j+k+i-q)} c_{\omega}^{j+k+i} c_{\chiw}^q \\
	&= \sum_{i=0}^{\bar\jmath + \bar k - q} \binom{\bar\jmath + \bar k - q}{i}
		e^{2(\bar\jmath+\bar k-q-i)}\xi^i \Gamma(j+k+i).
\end{align*}

Finally, in \cite[Remarks~5.3]{LewisCP}, Lewis pointed out that we can replace
$\gamma$ with $\tilde\gamma = (1-\kappa)\gamma + e^2$.
In our notation, we have
\[
 \tilde\gamma = (1-\kappa)\gamma + e^2 = (1-\kappa)\cxwt c_{\omega} + e^2 = \cwt c_{\chiw}.
\]
This reflects an alternate basis we could have chosen for the $RO(C_2)$-graded part:
If $p = q$ we could have used
\[
 \{ 1, \cwt c_{\chiw}, c_{\omega}c_{\chiw}, \cwt c_{\omega} c_{\chiw}^2,
 	c_{\omega}^2c_{\chiw}^2, \ldots, \cwt c_{\omega}^{q-1} c_{\chiw}^{q} \}
\]
while, if $p > q$, we could have used
\begin{multline*}
 \{ 1, \cwt c_{\chiw}, c_{\omega}c_{\chiw}, \cwt c_{\omega} c_{\chiw}^2,
 	c_{\omega}^2c_{\chiw}^2, \ldots, \cwt c_{\omega}^{q-1} c_{\chiw}^{q}, \\
	c_{\omega}^q c_{\chiw}^q, \cwt^{-1} c_{\omega}^{q+1} c_{\chiw}^q,
	\cwt^{-2} c_{\omega}^{q+2} c_{\chiw}^q, \ldots,
	\cwt^{-(p-q-1)} c_{\omega}^{p-1}c_{\chiw}^q \}.
\end{multline*}

\subsection{Other coefficient systems}
\label{subsec:other coefficients}

All of the above computations were done with coefficients in $\Mackey A$. 
The general result follows from the freeness of the cohomology we computed.

\begin{theorem}
\label{thm:allcoeffs}
If $\Mackey T$ is any Mackey functor, then multiplication induces isomorphisms
\begin{multline*}
  \Mackey H^{C_2}_{RO(\Pi B)}(\Xpq{p}{q}_+;\Mackey A) \tensorS \Mackey H^{C_2}_{RO(C_2)}(S^0;\Mackey T) \\
  \iso \Mackey H^{C_2}_{RO(\Pi B)}(\Xpq{p}{q}_+;\Mackey T)
\end{multline*}
and
\begin{multline*}
  \Mackey H_{C_2}^{RO(\Pi B)}(\Xpq{p}{q}_+;\Mackey A) \tensorS \Mackey H_{C_2}^{RO(C_2)}(S^0;\Mackey T) \\
  \iso \Mackey H_{C_2}^{RO(\Pi B)}(\Xpq{p}{q}_+;\Mackey T).
\end{multline*}
\qed
\end{theorem}
This is a direct corollary of \cite[Proposition~12.2]{Co:BGU1preprint}.
A similar result follows for $\Xpq{\infty}{q}$ and $\Xpq{p}{\infty}$.

Recall that $\Mackey \Z$ is the constant $\Z$ Mackey functor, which is also a ring
(Green functor).
There is an epimorphism $\Mackey A\to \Mackey \Z$ that induces an epimorphic ring map
$\Mackey H_{C_2}^{RO(C_2)}(S^0;\Mackey A) \onto \Mackey H_{C_2}^{RO(C_2)}(S^0;\Mackey \Z)$.
The kernel of this map is $\{ e^n\kappa \mid n\in\Z \}$, where we recall that
$e^n\kappa = 2e^n$ if $n>0$. From this we get the following.

\begin{corollary}
Theorem A is true after replacing $\Mackey A$ coefficients with
$\Mackey \Z$ coefficients throughout,
with the only change being that we have the relation
\[
 \cwt c_{\chiw} = \cxwt c_{\omega} + e^2
\]
in place of the relation
$\cwt c_{\chiw} = (1-\kappa)\cxwt c_{\omega} + e^2$.
In particular, we have that $\Mackey H_{C_2}^{RO(\Pi B)}(\Xpq{p}{q}_+;\Mackey \Z)$ is a free
module over $\Mackey H_{C_2}^{RO(C_2)}(S^0;\Mackey \Z)$.
\end{corollary}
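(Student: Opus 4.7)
The plan is to derive this as a formal consequence of Theorem~\ref{thm:allcoeffs} applied to $\Mackey T = \Mackey R\Z$, together with the epimorphism of Green functors $\Mackey A \onto \Mackey R\Z$. First I would invoke Theorem~\ref{thm:allcoeffs} to obtain the isomorphism
\[
 \Mackey H_G^{RO(\Pi B)}((X_{p,q})_+;\Mackey A) \tensorS \Mackey H_G^{RO(G)}(S^0;\Mackey R\Z)
 \iso \Mackey H_G^{RO(\Pi B)}((X_{p,q})_+;\Mackey R\Z).
\]
Since Theorem A gives a presentation of the left tensor factor as a free $\HS$-module on explicit monomial generators (namely $c_\omega$, $c_{\chi\omega}$, $c_{\omega-2}$, $c_{\chi\omega-2}$ together with the divided classes $c_{\chi\omega-2}^{-k}c_\omega^p$ and $c_{\omega-2}^{-k}c_{\chi\omega}^q$), base change along the ring map $\Mackey H_G^{RO(G)}(S^0;\Mackey A) \to \Mackey H_G^{RO(G)}(S^0;\Mackey R\Z)$ produces a free module over $\Mackey H_G^{RO(G)}(S^0;\Mackey R\Z)$ with the same generators (now understood to mean their images). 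This establishes the freeness claim and the existence of the same set of generators.

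Next I would check that the defining relations survive and are modified as stated. The two relations
\[
 c_\omega^p c_{\chi\omega}^q = 0 \qquad\text{and}\qquad c_{\chi\omega-2} c_{\omega-2} = \xi
\]
involve no occurrence of $\kappa$, so they transport unchanged under the base change. The relation
\[
 c_{\omega-2} c_{\chi\omega} - (1-\kappa) c_{\chi\omega-2} c_\omega = e^2
\]
is altered only by the image of $\kappa$ in $\Mackey H_G^{RO(G)}(S^0;\Mackey R\Z)$. The excerpt records that the kernel of $\Mackey H_G^{RO(G)}(S^0;\Mackey A) \onto \Mackey H_G^{RO(G)}(S^0;\Mackey R\Z)$ is generated by $\{e^n\kappa \mid n\in\Z\}$, so $\kappa$ maps to $0$ and $(1-\kappa)$ maps to $1$; the relation becomes the stated
\[
 c_{\omega-2} c_{\chi\omega} - c_{\chi\omega-2} c_\omega = e^2.
\]
The infinite divisibility statements and the characterizing equations $c_{\chi\omega-2}^k \cdot c_{\chi\omega-2}^{-k}c_\omega^p = c_\omega^p$ and $c_{\omega-2}^k \cdot c_{\omega-2}^{-k}c_{\chi\omega}^q = c_{\chi\omega}^q$ hold in $\Mackey R\Z$-cohomology because they hold in $\Mackey A$-cohomology and are preserved by the ring map; uniqueness of the divided classes follows from the additive freeness just established, which pins down the $\HS$-module in the relevant grading.

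The only step that requires genuine attention, rather than formal transport, is the uniqueness half of the infinite-divisibility clause after base change: one must confirm that the $\Mackey R\Z$-cohomology group in the grading of $c_{\chi\omega-2}^{-k}c_\omega^p$ really is a single free summand, so that the defining equation determines its generator uniquely. This follows from the additive description provided by Theorem~\ref{thm: additive description} together with the base-change isomorphism, but it is the one place where one needs to look past the purely formal passage of relations to check that the new coefficient ring does not introduce extra ambiguity. Once this is verified, the proof is complete.
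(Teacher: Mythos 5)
Your proposal is correct and takes essentially the same route as the paper: freeness via Theorem~\ref{thm:allcoeffs} with $\Mackey T = \Mackey R\Z$, and the relations transported along the epimorphism whose kernel is generated by the classes $e^n\kappa$, so that $1-\kappa$ becomes $1$ while the $\kappa$-free relations and the divisibility statements pass over unchanged. Your added check on the uniqueness of the divided classes is a sensible elaboration but does not change the substance of the argument.
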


\begin{proof}
Freeness is clear from Theorem~\ref{thm:allcoeffs}.
The map $\Mackey H_{C_2}^{RO(\Pi B)}(\Xpq{p}{q}_+;\Mackey A) \to \Mackey H_{C_2}^{RO(\Pi B)}(\Xpq{p}{q}_+;\Mackey \Z)$
is an epimorphism with kernel generated by the multiples of $e^n\kappa$, $n\in\Z$.
The only change that makes in the statement of the theorem is that we may replace $1-\kappa$ with $1$
because $\kappa = 0$ in $\Mackey H_{C_2}^{RO(C_2)}(S^0;\Mackey \Z)$.
\end{proof}

%%%%%%%%%%%%%%%%%%%%%%%%%%%%%%%%%%%%%%%%%%%%%%%%%%%%%%%%%%%%%%%%%%%%%%%%%%%%%%%%%%%%%%%%%%%%%%%%%%%%%%%%%%%%%%%%%%%%%%%%%%%%%
%%%%%%%%%%%%%%%%%%%%%%%%%%%%%%%%%%%%%%%%%%%%%%%%%%%%%%%%%%%%%%%%%%%%%%%%%%%%%%%%%%%%%%%%%%%%%%%%%%%%%%%%%%%%%%%%%%%%%%%%%%%%%
%%%%%%%%%%%%%%%%%%%%%%%%%%%%%%%%%%%%%%%%%%%%%%%%%%%%%%%%%%%%%%%%%%%%%%%%%%%%%%%%%%%%%%%%%%%%%%%%%%%%%%%%%%%%%%%%%%%%%%%%%%%%%
%%%%%%%%%%%%%%%%%%%%%%%%%%%%%%%%%%%%%%%%%%%%%%%%%%%%%%%%%%%%%%%%%%%%%%%%%%%%%%%%%%%%%%%%%%%%%%%%%%%%%%%%%%%%%%%%%%%%%%%%%%%%%
%%%%%%%%%%%%%%%%%%%%%%%%%%%%%%%%%%%%%%%%%%%%%%%%%%%%%%%%%%%%%%%%%%%%%%%%%%%%%%%%%%%%%%%%%%%%%%%%%%%%%%%%%%%%%%%%%%%%%%%%%%%%%
%%%%%%%%%%%%%%%%%%%%%%%%%%%%%%%%%%%%%%%%%%%%%%%%%%%%%%%%%%%%%%%%%%%%%%%%%%%%%%%%%%%%%%%%%%%%%%%%%%%%%%%%%%%%%%%%%%%%%%%%%%%%%
%%%%%%%%%%%%%%%%%%%%%%%%%%%%%%%%%%%%%%%%%%%%%%%%%%%%%%%%%%%%%%%%%%%%%%%%%%%%%%%%%%%%%%%%%%%%%%%%%%%%%%%%%%%%%%%%%%%%%%%%%%%%%
%%%%%%%%%%%%%%%%%%%%%%%%%%%%%%%%%%%%%%%%%%%%%%%%%%%%%%%%%%%%%%%%%%%%%%%%%%%%%%%%%%%%%%%%%%%%%%%%%%%%%%%%%%%%%%%%%%%%%%%%%%%%%
%%%%%%%%%%%%%%%%%%%%%%%%%%%%%%%%%%%%%%%%%%%%%%%%%%%%%%%%%%%%%%%%%%%%%%%%%%%%%%%%%%%%%%%%%%%%%%%%%%%%%%%%%%%%%%%%%%%%%%%%%%%%%

\section{Line bundles on $C_2$-equivariant finite projective spaces} \label{sec: Line bundles}
The aim of this section is to provide formulas for the Euler classes of equivariant line bundles over $\Xpq{p}{q}$ as well as for their direct sums. To achieve this goal we will first have to identify the $C_2$-equivariant Picard groups $\Pic^{C_2}(\Xpq{p}{q})$.

\subsection{The $C_2$-equivariant Picard group of finite projective spaces}

We begin with a technical lemma that will enable us to describe equivariantly trivial line bundles, followed by a result specific to $\Xpq{1}{}$.

\begin{lemma}\label{lem:onerepn}
Suppose that $\lambda$ is a complex $C_2$-line bundle over a $C_2$-space $X$ of the
$C_2$-homotopy type of a $C_2$-CW complex. Suppose further that $\lambda$ is nonequivariantly
trivial and that each fiber of $\lambda$ over $X^{C_2}$ is $C_2$-isomorphic to $\C$.
Then $\lambda \iso X\times\C$ as a complex $C_2$-vector bundle.\end{lemma}

\begin{proof}
We can assume that $X$ is a $C_2$-CW complex and we construct a trivialization by induction on
its skeleta $\{X^n\}$. There is a unique (up to homotopy) trivialization on the set of vertices $X^0$, so assume now
that $n\geq 1$ and that we have a trivialization of $\lambda|X^{n-1}$.
Consider an $n$-cell $C_2/H\times D^n$ attached along a map $\alpha\colon C_2/H\times S^{n-1}\to X^{n-1}$,
where $H = C_2$ or the trivial subgroup.
Using, in the case $H = C_2$, that the fibers over the fixed sets are isomorphic to $\C$, we have that
$\lambda|C_2/H\times D^n \iso C_2/H\times D^n\times\C$. By induction, we have that
$\alpha^*(\lambda|X^{n-1}) \iso C_2/H\times S^{n-1}\times\C$.
The bundle $\lambda$ then determines a $C_2$-clutching function $\bar\lambda\colon C_2/H\times S^{n-1} \to S^1$,
where $S^1$ is the space of orthogonal maps from $\C$ to itself, with $C_2$ acting on said maps by conjugation,
%OldSean: I added "on said maps" to make it clear what C_2 was acting on and that the action was not complex conjugation.
%Steve: Good.
that is, trivially.
By our assumption that $\lambda$ is nonequivariantly trivial, we have that
$\bar\lambda|eH\times S^{n-1}$ is nonequivariantly homotopically trivial. It follows that $\bar\lambda$
is $C_2$-homotopically trivial, hence we may extend the trivialization of $\lambda|X^{n-1}$ over this $n$-cell.
By induction, we may construct a $C_2$-trivialization of $\lambda$ on all of $X$.
\end{proof}

\begin{lemma}\label{lem:mixed}
Suppose that $\lambda$ is a complex $C_2$-line bundle over $\Xpq{1}{}$ with
its restriction to one of the fixed points being $\C$ and its restriction to the other
being $\C^\sigma$. Then, nonequivariantly, $\lambda \iso \omega^{\tensor n}$ for some odd $n$.
\end{lemma}

\begin{proof}
Write
\[
 \Xpq{1}{} = D(\C^\sigma) \union_{S(\C^\sigma)} D(\C^\sigma) = D_0 \union D_1,
\]
and suppose that $\lambda|D_0 \iso D_0\times \C$ while $\lambda|D_1 \iso D_1\times\C^\sigma$.
Then $\lambda$ is determined by its clutching function, which will be a $C_2$-map from 
the equator $S(\C^\sigma)$
to the space of orthogonal maps $\C\to \C^\sigma$ with $C_2$ acting by conjugation.
This space of maps is another copy of $S(\C^\sigma)$, so the clutching function is a $C_2$-map
$S(\C^\sigma)\to S(\C^\sigma)$. Because $C_2$ acts by negation on the circle $S(\C^\sigma)$,
any such map must have odd degree as a nonequivariant map, and we have, nonequivariantly,
that $\lambda \iso \omega^{\tensor n}$ where $n$ is the degree of the clutching function.
\end{proof}

We are now in the position to determine the $C_2$-equivariant Picard group of projective spaces.

\begin{theorem}
If $p+q\geq 2$, then
$\Pic^{C_2}(\Xpq{p}{q}) \iso \Z\times \Z/2$.
Writing the operation as multiplication, the group is generated by $\omega$ and $\chiw$
with $(\chiw)^2 = \omega^2$.
Alternatively, it is generated by $\omega$ and $\C^\sigma$, with $(\C^\sigma)^2 = \C = 1$
and $\omega\cdot\C^\sigma = \chiw$.
\end{theorem}

\begin{proof}
The cases where $p=0$ or $q=0$ are simple, so we assume that $p\geq 1$ and $q\geq 1$.
We know that we have a short exact sequence
\[
 0 \to \ker u \to \Pic^{C_2}(\Xpq{p}{q}) \xrightarrow{u} \Pic(\Xpq{p}{q}) \to 0
\]
where $u$ is the forgetful map.
Suppose $\lambda\in \ker u$, so $\lambda$ is a $C_2$-line bundle over $\Xpq{p}{q}$
that is nonequivariantly trivial. Restrict to the subspace $\Xpq{1}{}$.
Then Lemma~\ref{lem:mixed} implies that the restrictions of $\lambda$ to the two fixed points must
give the same representations of $C_2$,
hence the fibers of $\lambda$ on $\Xpq{p}{q}^{C_2}$ are all the same representation.

If that common representation is $\C$, then it follows immediately from
Lemma~\ref{lem:onerepn} that $\lambda \iso \C$.
If the common representation is $\C^\sigma$, then apply Lemma~\ref{lem:onerepn}
to $\lambda\tensor\C^{\sigma}$ to get $\lambda\tensor\C^\sigma\iso \C$,
hence
\[
 \lambda \iso (\lambda\tensor\C^\sigma)\tensor\C^\sigma \iso \C^\sigma.
\]
Hence $\ker u \iso \Z/2 = \{\C,\C^\sigma\}$.
The theorem now follows from the nonequivariant calculation of $\Pic(\Xpq{p}{q}) \iso \Z$,
generated by $\omega$.
%OldSean: should this be non-equivariant? Also, do we need to mention the splitting? We have an obvious splitting, but there are non-split extensions, no?
%Steve: Yes, changed to nonequivariant. There are no non-split extensions because the right-most group is projective (free).
%Sean: Excellent. Yes, of course it must split, how embarrassing... 
\end{proof}

Since every equivariant bundle gives rise to a representation of the fundamental groupoid, it is possible to subdivide the elements of $\Pic^{C_2}(\Xpq{p}{q})$  according to the element of $RO(\Pi B)$ they represent. It is easy to see that only four elements of $RO(\Pi B)$ can arise this way: $\omega^*$, $\C$, $\chiw^*$  and $\C^\sigma$. This suggests the following subdivision.

\begin{definition}
Let $L\in \Pic^{C_2}(\Xpq{p}{q})$. We will say that $L$ is of 
\begin{itemize}
    \item type $\I$ if $L\cong \omega^{\otimes n}$ for $n\in \ZZ$ odd;
    \item type $\II$ if $L\cong \omega^{\otimes n}$ for $n\in \ZZ$  even;
    \item type $\III$ if $L\cong \chiw^{\otimes n}$ for $n\in \ZZ$ odd;
    \item type $\IV$ if $L\cong \chiw^{\otimes n}$ for $n\in \ZZ$ even.
\end{itemize}
We will write $\Pic_\dagger^{C_2}(\Xpq{p}{q})$ for the collection of line bundles of type $\dagger$, for $\dagger\in \{\I,\II,\III,\IV\}$.
\end{definition}

\subsection{The  Euler class of $O(n)$}

We now adopt the convention from algebraic geometry of writing $O(n)$ for $(\omega\dual)^{\tensor n}$,
the $n$th tensor power of the dual of the tautological bundle $\omega$.
We write $\chi O(n) = O(n)\tensor\C^\sigma$.
We also switch to using the generators $\cwd$ and $\cxwd$, which will simplify some of the formulas.

In the following calculation, we use the fact, shown in \cite{Co:BGU1preprint}, that the map
\begin{multline*}
 \eta\colon \Mackey H_{C_2}^{RO(\Pi B)}(\Xpq{\infty}{\infty}_+)
 	\to \Mackey H_{C_2}^{RO(\Pi B)}(\Xpq{\infty}{\infty}_+^{C_2}) \\
 \iso \Mackey H_{C_2}^{RO(C_2)}(S^0)[c,\zeta_1^{\pm 1}]
    \dirsum \Mackey H_{C_2}^{RO(C_2)}(S^0)[c,\zeta_0^{\pm 1}]
\end{multline*}
induced by the inclusion of the fixed points
is injective in even gradings, that is, gradings $\alpha$ such that both $|\alpha|$ and $\alpha^{C_2}$ are even.
Here, $c\in H^2(\Xp\infty_+)$ is the Euler class of the dual bundle $\omega\dual$,
$\deg\zeta_1 = \omega-2$, and $\deg\zeta_0 = \chi\omega-2$.

\begin{proposition}\label{prop:EulerClasses}
In the cohomology of the infinite projective space $\Xpq\infty\infty$ we have, for $n\in\Z$,
\begin{align*}
 e(O(2n)) &= n\cwd(\tau(\iota^{-2})\cxwt + e^{-2}\kappa\cxwd) \\
 e(O(2n+1)) &= \cwd(1 + n\tau(1) + ne^{-2}\kappa\cwt\cxwd) \\
 e(\chi O(2n)) &= n\tau(1)\cwt\cxwd + e^2 \\
 e(\chi O(2n+1)) &= \cxwd(1 + n\tau(1) + ne^{-2}\kappa\cxwt\cwd).
\end{align*}
\end{proposition}

\begin{proof}
Recall the map $\eta$ mentioned before the proposition.
Because these elements live in gradings where $\eta$ is injective,
it suffices to show equality after applying $\eta$ to both sides.

It is shown in \cite{Co:BGU1preprint} that
\begin{align*}
 \eta(e(O(1))) = \eta(\cwd) &= (c\zeta_1, (\xi c + e^2)\zeta_0^{-1}) \\
 \eta(e(\chi O(1))) = \eta(\cxwd) &= ((\xi c + e^2)\zeta_1^{-1}, c\zeta_0) \\
 \eta(\cwt) &= (\zeta_1, \xi\zeta_0^{-1}) \\
 \eta(\cxwt) &= (\xi\zeta_1^{-1}, \zeta_0)
\end{align*}
%OldSean: What about a sentence mentioning that you use the classical computations, and just adjust the grading with the other classes.
%Steve: It's a little more complicated than that: The e^2 terms show up because some of the fibers are \C^\sigma, hence its Euler class has to show up. I think it's a bit much to try to summarize.
%Sean: Okay. Then what we have here is perfect.
An argument similar to the one that computed the Euler classes of $O(1)$ and $\chi O(1)$ above shows that
\begin{align*}
 \eta(e(O(2n))) &= (2nc, 2nc) \\
 \eta(e(O(2n+1))) &= ((2n+1)c\zeta_1, ((2n+1)\xi c + e^2)\zeta_0^{-1}) \\
 \eta(e(\chi O(2n))) &= (2n\xi c + e^2, 2n\xi c + e^2) \\
 \eta(e(\chi O(2n+1))) &= (((2n+1)\xi c + e^2)\zeta_1^{-1}, (2n+1)c\zeta_0).
\end{align*}
These facts and the fact that $\eta$ is a map of algebras over $\Mackey H_{C_2}^{RO(C_2)}(S^0)$
now allows us to apply $\eta$ to verify the equations in the proposition.
\end{proof}

\subsection{The Euler class of a sum of line bundles}

We will consider the following scenario in the next section.
Suppose that we have a list $\F$ of line bundles over $\Xpq pq$ which we subdivide into four families $\F_\I$, $\F_\II$, $\F_\III$, and $\F_\IV$,
where each $\F_\dagger$ consists of the bundles of type $\dagger$.
Let $n_\dagger$ be the number of bundles in $\F_\dagger$ and let
$d_\dagger$ denote the products of the degrees of the line bundles in $\F_\dagger$.
Finally, set $n = n_\I+n_\II+n_\III+n_\IV$ and $d = d_\I d_\II d_\III d_\IV$.
In this section we will compute the Euler class of the sum of all of these line bundles.

Let $F_\dagger$ be the sum of the line bundles in $\F_\dagger$. 

\begin{lemma}
$e(F_\II) = \frac12 d_\II \cwd^{\;n_\II}(\tau(\iota^{-2n_\II})\cxwt^{n_\II} + e^{-2n_\II}\kappa\cxwd^{\;n_\II})$.
\end{lemma}

\begin{proof}
If $\F_\II = \{ O(d_1), \ldots, O(d_{n_\II}) \}$, where the $d_i$ are all even, then
\[
 e(F_\II) = \prod_{i=1}^{n_\II} \frac{d_i}{2} \cwd(\tau(\iota^{-2})\cxwt + e^{-2}\kappa\cxwd)
\]
by Proposition~\ref{prop:EulerClasses}. We compute
\[
 (\tau(\iota^{-2})\cxwt + e^{-2}\kappa\cxwd)^{n_\II}
  = 2^{n_\II-1}(\tau(\iota^{-2n_\II})\cxwt^{n_\II} + e^{-2n_\II}\kappa\cxwd^{\;n_\II})
\]
using that $\tau(\iota^{m})\tau(\iota^{n}) = 2\tau(\iota^{m+n})$,
$e^m\kappa \cdot e^n\kappa = 2e^{m+n}\kappa$, and $\tau(\iota^m)e^{n}\kappa = 0$.
The lemma follows.
\end{proof}

\begin{lemma}
$e(F_\IV) = \frac12 (d_\IV\tau(1)\cwt^{n_\IV}\cxwd^{\;n_\IV} + e^{2n_\IV}\kappa)$.
\end{lemma}

\begin{proof}
If $\F_\IV =\{ \chi O(d_1), \ldots, \chi O(d_{n_\IV}) \}$, where the $d_i$ are all even, then
\[
 e(F_\IV) = \prod_{i=1}^{n_\IV} \left(\frac{d_i}{2}\tau(1)\cwt\cxwd + e^2 \right)
\]
by Proposition~\ref{prop:EulerClasses}. 
Using that
$\tau(1)^2 = 2\tau(1)$, $\tau(1) e = 0$, and $e=\frac{1}{2}e\kappa$ we get
\begin{align*}
 e(F_\IV) &= \frac12 d_\IV\tau(1)\cwt^{n_\IV}\cxwd^{\;n_\IV} + e^{2n_\IV} \\
  &= \frac12 (d_\IV\tau(1)\cwt^{n_\IV}\cxwd^{\;n_\IV} + e^{2n_\IV}\kappa)
\end{align*}
if $n_\IV > 0$, but this formula also works when $n_\IV = 0$ and $d_\IV = 1$, where it reduces to 1.
\end{proof}

In order to calculate the Euler classes of $F_\I$ and $F_\III$ we need the following algebraic observation.

\begin{lemma}\label{lemma:2x}
Consider the ring $\Z[x]/\langle x^2-2x \rangle$. If $a_1$, \dots, $a_\ell$ are integers, then
\[
 \prod_{j=1}^\ell (1 + a_j x) = 1 + \tfrac{1}{2}\Bigl[\prod_j (1+2a_j) - 1\Bigr]x.
\]
\end{lemma}

\begin{proof}
Expand $\prod_j (1+a_j x) = \sum_n b_n x^n$ in $\Z[x]$. When we reduce modulo the relation $x^2 = 2x$,
the coefficient of $x$ will be $b_1 + 2b_2 + 4b_3 + \cdots$. Substituting $2$ for $x$ in the expansion, we get
\[
 \prod_j (1 + 2a_j) = 1 + 2b_1 + 4b_2 + 8b_3 + \cdots,
\]
from which the computation now follows.
\end{proof}

We will use this computation in the following form.

\begin{corollary}\label{cor:2x}
If $a_1$, \dots, $a_\ell$ are odd integers, then, in $\Z[x]/\langle x^2-2x \rangle$, we have
\[
 \prod_{j=1}^\ell \left(1 + \frac{a_j - 1}{2}x \right) = 1 + \frac{\prod_j a_j - 1}{2} x.
\]
\qed
\end{corollary}

\begin{lemma}
$\displaystyle e(F_\I) = \cwd^{\;n_\I}\left(1 + \frac{d_\I-1}{2}\tau(1) + \frac{d_\I-1}{2}e^{-2}\kappa\cwt\cxwd\right)$
\end{lemma}

\begin{proof}
If $\F_\I =\{ O(d_1), \ldots,  O(d_{n_\I}) \}$, where the $d_i$ are all odd, then
\[
 e(F_\I) = \prod_{i=1}^{n_\I} \cwd\left(1 + \frac{d_i-1}{2}\tau(1) + \frac{d_i-1}{2}e^{-2}\kappa\cwt\cxwd\right)
\]
by Proposition~\ref{prop:EulerClasses}. We have that
\[
 (\tau(1) + e^{-2}\kappa\cwt\cxwd)^2 = 2(\tau(1) + e^{-2}\kappa\cwt\cxwd),
\]
so the lemma follows from Corollary~\ref{cor:2x}.
\end{proof}

\begin{lemma}
$\displaystyle e(F_\III) = \cxwd^{\;n_\III}\left(1 + \frac{d_\III-1}{2}\tau(1) + \frac{d_\III-1}{2}e^{-2}\kappa\cxwt\cwd\right)$
\end{lemma}

\begin{proof}
This is proved in the same way as the previous lemma.
\end{proof}

We can now put these pieces together to get the computation we want.
As we did in introducing Lemma~\ref{lem:pushforwardfree},
we write $\zeta = \rho(\zeta_1) \in H^{\omega-2}(\Xp\infty_+)$ for the restriction of $\zeta_1$
and $c\in H^2(\Xp\infty_+)$ for the Euler class of the dual of the tautological bundle. Remember that we have
$\rho(\zeta_0) = \iota^2\zeta^{-1}$, $\rho(\cwd) = \zeta c$, and $\rho(\cxwd) = \iota^2\zeta^{-1}c$.

\begin{proposition}\label{prop:totalEulerClass}
If $F$ is the sum of all the line bundles in the collections
$\F_\I$, $\F_\II$, $\F_\III$, and $\F_\IV$, then
\begin{align*}
 e(F) &= \frac{d}{2} \tau(\iota^{2(n_\III+n_\IV)}\zeta^{n_\I-n_\III} c^n) \\
  &\quad + \frac{ d_\I d_\II}{2} e^{-2(n_\II-n_\IV+1)}\kappa\cwt\cwd^{\;n_\I+n_\II}\cxwd^{\;n_\II+n_\III+1} \\
  &\quad + \frac{d_\II d_\III}{2} e^{-2(n_\II-n_\IV+1)}\kappa\cxwt\cwd^{\;n_\I+n_\II+1}\cxwd^{\;n_\II+n_\III}.
\end{align*}
\end{proposition}

\begin{proof}
We need to compute the product of the Euler classes $e(F_\dagger)$ that we computed above.
It is simplest to start with the following pairs.
\begin{align*}
 e(F_\II\dirsum F_\IV)
  &= \frac{d_\II}{2} \cwd^{\;n_\II}\left(\tau(\iota^{-2n_\II})\cxwt^{n_\II} + e^{-2n_\II}\kappa\cxwd^{\;n_\II}\right)
     \cdot \frac12 (d_\IV\tau(1)\cwt^{n_\IV}\cxwd^{\;n_\IV} + e^{2n_\IV}\kappa) \\
  &= \frac{d_\II}{2} \cwd^{\;n_\II} \left(d_\IV\tau(\iota^{-2n_\II})\cxwt^{n_\II}\cwt^{n_\IV}\cxwd^{\;n_\IV}
  					+ e^{-2(n_\II-n_\IV)}\kappa\cxwd^{\;n_\II}\right)
\end{align*}
using the fact that $\tau(\iota^m)\tau(\iota^n) = 2\tau(\iota^{m+n})$ and that
$\tau(\iota^m)\kappa = 0$.
\begin{align*}
 e(F_\I\dirsum F_\III)
  &= \cwd^{\;n_\I}\left(1 + \frac{d_\I-1}{2}\tau(1) + \frac{d_\I-1}{2}e^{-2}\kappa\cwt\cxwd\right) \\
  &\qquad \cdot \cxwd^{\;n_\III}\left(1 + \frac{d_\III-1}{2}\tau(1) + \frac{d_\III-1}{2}e^{-2}\kappa\cxwt\cwd\right) \\
  &= \cwd^{\;n_\I}\cxwd^{\;n_\III}\left(1 + \frac{d_\I d_\III-1}{2}\tau(1) \right. \\
  &\qquad\qquad\qquad \left. {}+ \frac{d_\I-1}{2}e^{-2}\kappa\cwt\cxwd + \frac{d_\III-1}{2}e^{-2}\kappa\cxwt\cwd \right)
\end{align*}
This uses previously noted vanishings as well as 
$e^{-2}\kappa\cwt\cdot e^{-2}\kappa\cxwt = 2\xi e^{-4}\kappa = 0$.
Putting these computations together we get
\begin{align*}
e(F) 
  &= \frac{d_\II}{2} \cwd^{\;n_\II}\left(d_\IV\tau(\iota^{-2n_\II})\cxwt^{n_\II}\cwt^{n_\IV}\cxwd^{\;n_\IV}
  					+ e^{-2(n_\II-n_\IV)}\kappa\cxwd^{\;n_\II}\right) \\
  & \quad \cdot \cwd^{\;n_\I}\cxwd^{\;n_\III}\left(1 + \frac{d_\I d_\III-1}{2}\tau(1) 
  		+ \frac{d_\I-1}{2}e^{-2}\kappa\cwt\cxwd + \frac{d_\III-1}{2}e^{-2}\kappa\cxwt\cwd \right) \\
  &= \frac{d_\II}{2}\cwd^{\;n_\I+n_\II}\cxwd^{\;n_\III}
  		\Bigl( d_\I d_\III d_\IV\tau(\iota^{-2n_\II})\cxwt^{n_\II}\cwt^{n_\IV}\cxwd^{\;n_\IV}
				+ e^{-2(n_\II-n_\IV)}\kappa\cxwd^{\;n_\II} \\
  &\qquad + (d_\III-1)e^{-2(n_\II-n_\IV+1)}\kappa\cwt\cxwd^{n_\II+1}
				+ (d_\III-1)e^{-2(n_\II-n_\IV+1)}\kappa\cxwt\cwd\cxwd^{\;n_\II} \Bigr) \\
  &= \frac{d_\II}{2}\cwd^{\;n_\I+n_\II}\cxwd^{\;n_\III}
  		\Bigl( d_\I d_\III d_\IV\tau(\iota^{-2n_\II})\cxwt^{n_\II}\cwt^{n_\IV}\cxwd^{\;n_\IV} \\
  &\qquad\qquad + d_\I e^{-2(n_\II-n_\IV+1)}\kappa\cwt\cxwd^{n_\II+1}
				+ d_\III e^{-2(n_\II-n_\IV+1)}\kappa\cxwt\cwd\cxwd^{\;n_\II} \\
  &\qquad\qquad + e^{-2(n_\II-n_\IV+1)}\kappa\cxwd^{\;n_\II}(e^2 - \cwt\cxwd - \cxwt\cwd) \Bigr) \\
  &= \frac{d_\II}{2}\cwd^{\;n_\I+n_\II}\cxwd^{\;n_\III}
  		\Bigl( d_\I d_\III d_\IV\tau(\iota^{-2n_\II})\cxwt^{n_\II}\cwt^{n_\IV}\cxwd^{\;n_\IV} \\
  &\qquad\qquad + d_\I e^{-2(n_\II-n_\IV+1)}\kappa\cwt\cxwd^{n_\II+1}
				+ d_\III e^{-2(n_\II-n_\IV+1)}\kappa\cxwt\cwd\cxwd^{\;n_\II} \\
  &\qquad\qquad + e^{-2(n_\II-n_\IV+1)}\kappa\cxwd^{\;n_\II}\cdot \tau(1)\cxwt\cwd \Bigr) \\
  &= \frac{d_\II}{2}\cwd^{\;n_\I+n_\II}\cxwd^{\;n_\III}
  		\Bigl( d_\I d_\III d_\IV\tau(\iota^{-2n_\II})\cxwt^{n_\II}\cwt^{n_\IV}\cxwd^{\;n_\IV} \\
  &\qquad\qquad + d_\I e^{-2(n_\II-n_\IV+1)}\kappa\cwt\cxwd^{n_\II+1}
				+ d_\III e^{-2(n_\II-n_\IV+1)}\kappa\cxwt\cwd\cxwd^{\;n_\II} \Bigr).
\end{align*}
Finally, we use that
\begin{multline*}
 \tau(\iota^{-2n_\II})\cxwt^{n_\II}\cwt^{n_\IV}\cwd^{\;n_\I+n_\II}\cxwd^{\;n_\III + n_\IV} \\
  = \tau(\iota^{-2n_\II}\iota^{2n_\II}\zeta^{-n_\II}\zeta^{n_\IV}\zeta^{n_\I+n_\II}c^{n_\I+n_\II}
           \iota^{2(n_\III+n_\IV)}\zeta^{-(n_\III+n_\IV)}c^{n_\III+n_\IV}) \\
  = \tau(\iota^{2(n_\III+n_\IV)}\zeta^{n_\I-n_\III}c^n)
\end{multline*}
to get the result of the proposition.
\end{proof}

\section{Equivariant Bezout's theorem}\label{sec: Application}
%\section{An enumerative application}\label{sec: Application}

We now consider the following scenario, which we began to discuss in the preceding section. We consider a finite list $\F$ of line bundles over $\Xpq{p}{q}$, which we subdivide into four families $\F_\I$, $\F_\II$, $\F_\III$, and $\F_\IV$,
where each $\F_\dagger$ consists of the bundles of type $\dagger$.
Let $n_\dagger$ be the number of bundles in $\F_\dagger$ and let
$d_\dagger$ denote the products of the degrees of the line bundles in $\F_\dagger$.
Set  $n = n_\I+n_\II+n_\III+n_\IV$ and $d = d_\I d_\II d_\III d_\IV$; we shall assume that $n = p+q-1$.
Also set
\[
 \alpha = n_\I + n_\II - (p-1) \quad\text{and}\quad \beta = n_\II + n_\III - (q-1).
\]
Suppose that, for each bundle $L_i\in\F$ we have an algebraic equivariant section $s_i$ whose zero locus $H_i$ is a
$C_2$-invariant hypersurface in $\Xpq pq$.
We want to investigate the intersection of these hypersurfaces, which is the zero locus of the direct sum of the corresponding sections. The quantities $-\alpha$ and $-\beta$ can be thought of as the ``expected codimensions'' of the intersections 
\[
\Big(\bigcap_i H_i \Big)\cap \Xp{p} \text{ and } \Big(\bigcap_i H_i \Big) \cap \Xq{q},
\]
respectively.

We shall want the hypersurfaces to be at least nonequivariantly dimensionally transverse, so that their intersection is a discrete set of points
(given that $n = p+q-1$), hence a $C_2$-set. We first notice the following constraint.

\begin{proposition}
If the $p+q-1$ hypersurfaces $H_i$ are (nonequivariantly) dimensionally transverse, then $\alpha\geq 0$ and $\beta\geq 0$.
\end{proposition}
%OldSean:Do we need to require that n=p+q-1? It is sort of only suggested that this be the case but it is necessary for the proof.
%Steve: My first reaction was that that would be the third time it was mentioned in that half page, but I went ahead and added it.
%Sean: Touche. It didn't seem to me that n was fixed to be any particular value but that we were sayng "when n is p+q-1 then we have that" as opposed to "we assume n=p+q-1." I think the way you stated it is nice and avoids being repetitive.

\begin{proof}
Suppose that $\alpha < 0$, so $n_\I + n_\II < p-1$. Consider the restrictions of the sections $s_i$ to the
fixed set $\Xp p \subset \Xpq pq$.
The fibers of any line bundle of type $\III$ or $\IV$ over $\Xp p$ have the form $\C^\sigma$, so any section of such a line
bundle must be zero on $\Xp p$. That leaves only $n_\I + n_\II < p-1$ sections that could possibly be nonzero on $\Xp p$,
but the intersection of the corresponding hypersurfaces must then have codimension less than $p-1$, hence cannot consist of
a discrete set of points. Therefore we must have $\alpha\geq 0$. The argument that $\beta\geq 0$ is similar.
\end{proof}

So from now on we shall assume $\alpha\geq 0$ and $\beta\geq 0$.

Having used the term ``dimensionally transverse,'' a word about the terminology we shall use.
When we write simply {\em transverse} we will mean an equivariant map whose underlying nonequivariant map is transverse in the sense of
differential topology, which is precisely the notion of equivariant transversality. 
There are the weaker notions of (equivariantly) {\em dimensionally transverse}, to be defined below, and {\em nonequivariantly dimensionally transverse},
the latter meaning that the intersection has the expected (co)dimension.

Write $F$ for the sum of the line bundles in $\F$. We start with some purely topological results.

\begin{proposition}\label{prop:transversality}
If $\alpha\geq 0$ and $\beta\geq 0$, then we have the following.
\begin{enumerate}
    \item There exists an equivariant topological section of $F$ that is transverse to the zero section.
    \item If $s$ is an equivariant section of $F$ whose zero locus consists of a discrete set of points, then there exists an equivariant section $s'$ transverse to the zero section and agreeing with $s$ outside of an arbitrarily small neighborhood of the zeros of $s$.
\end{enumerate}
\end{proposition}

\begin{proof}
For part (1) we cannot directly apply the equivariant transversality results of Wasserman \cite{Was:difftopology}, but we can adapt
the line of argument given there or in \cite{Haus:Transversality}, as follows.
By nonequivariant transversality,
there exists a section $s\colon \Xpq pq^{C_2}\to F^{C_2}$
transverse to the zero section.
Because of our assumptions, $s^{-1}(0)$  consists of isolated points (or is empty).
Let $x\in \Xp p$ be a point such that $s(x) = 0$; for dimensional reasons such
a point can exist only if $\alpha = 0$, that is, $n_\I + n_\II = p-1$. In that case, we have
\[
 \tau_x \iso \C^{p-1}\dirsum \C^{q\sigma} \iso F_x,
\]
where $\tau_x$ is the tangent space to $x$ in $\Xpq pq$.
By transversality, $s$ induces an isomorphism of $\tau_x^{C_2}$ with $F_x^{C_2}$.
Writing $V_{C_2}$ for the orthogonal complement of $V^{C_2}$ in any $C_2$-representation $V$,
take an isomorphism of $(\tau_x)_{C_2}$ with $(F_x)_{C_2}$ and use that to extend $s$ equivariantly
and transversely
to a small open neighborhood of $x$ in $\Xpq pq$. 
Do the same for all such $x$, and similarly for any zeros of $s$ that lie in $\Xq q$.
We can then extend further to a finite collection of small balls around other points in $\Xpq pq^{C_2}$
(staying away from the zero section) and then shrink to obtain a tubular open neighborhood 
$U$ of $\Xpq pq^{C_2}$ with a transverse equivariant section $s$ defined on a neighborhood of $\bar U$.

Now consider $M = \Xpq pq \setminus U$, a free $C_2$-manifold with boundary
$\bndry M = \bndry\bar U$. We have $s$ defined on a neighborhood of $\bndry M$ and seek to extend it over all of $M$. 
Let $z$ be the zero section of $F|M$ and choose a homotopy of $z$ with a section $z'$ agreeing with $s$ on a neighborhood of $\bndry M$.
Because $C_2$ acts freely, $M/C_2$ is again a manifold and $s$ induces a section
$\bar s = s/C_2$ of the bundle $(F|M)/C_2$ on a neighborhood of $\bndry M/C_2$. By nonequivariant transversality, we may
extend $\bar s$ to a section $\bar s'$ on all of $M/C_2$ that is transverse to the zero section
of $(F|M)/C_2$ and agrees with $\bar s$ in a neighborhood of $\bndry M/C_2$. 
If $\pi\colon M\to M/C_2$ is the projection, consider $\bar z'\circ\pi$ and
$\bar s'\circ\pi$, where $\bar z' = z'/C_2$. Take a $C_2$-homotopy
from $\bar z'\circ\pi$ to $\bar s'\circ\pi$ and lift it along the fibration $F|M \to (F|M)/C_2$,
starting at $z'$,
to obtain a homotopy from $z'$ to an equivariant section $s'$ agreeing with $s$ on a neighborhood of $\bndry M$.
Gluing this to the section $s$ on $\bar U$, we obtain the desired equivariant section
transverse to the zero section.

The argument for part (2) is simpler. If $x$ is a $C_2$-fixed isolated zero of $s$, take an arbitrarily small
disc neighborhood $U$ of $x$, not containing any other zeros of $s$ and over which $F$ is trivial, As argued above, we must have
$\tau_x \iso F_x$ as $C_2$-representations. Wasserman's transversality results then allow us to find a homotopy of $s$ to
a section $s'$ agreeing with $s$ outside of $U$ and transverse to the zero section inside $U$.
At any free orbit in the zero section of $s$ we can make a similar argument using nonequivariant transversality.
Doing this at all points in the zero section of $s$ gives us a transverse section $s'$ as in the proposition.
\end{proof}

Proposition~\ref{prop:transversality}(1) reassures us that the following result is not vacuous.

\begin{theorem}\label{thm:genericallyTransverse}
Let $F$ be as above and suppose that $s$ is an equivariant topological section of $F$ transverse to the zero section.
Let $Z(s)$ be its zero locus.
Let $\eta$ be the representation of $\Pi B$ induced by $F$, so
\[
 \eta = (n_\I-n_\III)\omega + 2(n_\II + n_\III) + 2(n_\III+n_\IV)\sigma.
\]
Then
\[
 [Z(s)]^*_\eta = \frac{d-A-B}{2}[C_2]^*_\eta + A[\Xp{}]^*_\eta + B[\Xq{}]^*_\eta
\]
where the pair of integers $(A,B)$ is given by
\[
(A,B)=
\begin{cases}
 \left(d_{\I}d_{\II},d_{\II}d_{\III} \right)  \quad& \text{ for }\alpha=0,\ \beta=0 \\
 \ \ \left(d_{\I}d_{\II},0 \right)  \quad& \text{ for }\alpha=0,\ \beta>0 \\
 \ \ \left(0,d_{\II}d_{\III} \right)  \quad &\text{ for }\alpha>0,\ \beta=0\\
 \ \ \ \ (0,0)   \quad& \text{ for }\alpha>0,\ \beta>0.
\end{cases}
\]
\end{theorem}

\begin{proof}
Given that $s$ is transverse to the zero section, we know that $[Z(s)]^*_\eta = e(F)$. In Proposition~\ref{prop:totalEulerClass} we
calculated that
\begin{align*}
 e(F) &= \frac{d}{2} \tau(\iota^{2(n_\III+n_\IV)}\zeta^{n_\I-n_\III} c^n) \\
  &\qquad + \frac{d_\I d_\II}{2} e^{-2(n_\II-n_\IV+1)}\kappa\cwt\cwd^{\;n_\I+n_\II}\cxwd^{\;n_\II+n_\III+1} \\
  &\qquad + \frac{d_\II d_\III}{2} e^{-2(n_\II-n_\IV+1)}\kappa\cxwt\cwd^{\;n_\I+n_\II+1}\cxwd^{\;n_\II+n_\III} \\
  &= \frac{d}{2} [C_2]^*_\eta + \frac{d_\I d_\II}{2} e^{-2(\alpha+\beta)}\kappa\cwt\cwd^{\;p-1 + \alpha}\cxwd^{\;q+\beta} \\
  &\qquad + \frac{d_\II d_\III}{2} e^{-2(\alpha+\beta)}\kappa\cxwt\cwd^{\;p+\alpha}\cxwd^{\;q-1 + \beta}
\end{align*}
where we use Lemma~\ref{lem:pushforwardfree} to identify the first term.
Consider the second term. If $\alpha > 0$, then the term is a multiple of $\cwd^{\;p}\cxwd^{\;q} = 0$; these are the cases where $A = 0$
in the statement of the theorem. So suppose that $\alpha = 0$. Then the second term is
\[
    \frac{d_\I d_\II}{2} e^{-2\beta}\kappa\cwt\cwd^{\;p-1}\cxwd^{\;q+\beta}
    = \frac{d_\I d_\II}{2}\kappa\cwt^{-\beta+1}\cwd^{\;p-1}\cxwd^{\;q},
\]
which follows by induction, using the relation $\cwt\cxwd = (1-\kappa)\cxwt\cwd + e^2$, the fact that
$\kappa\cxwt\cwt = \kappa\xi = 0$, and the fact that $\cxwd^{\;q}$ is divisible by $\cwt$. Taking this a step further,
we can rewrite the term as
\begin{align*}
 \frac{d_\I d_\II}{2}\kappa\cwt^{-\beta+1}\cwd^{\;p-1}\cxwd^{\;q}
  &= \frac{d_\I d_\II}{2}(2-\tau(1))\cwt^{-\beta+1}\cwd^{\;p-1}\cxwd^{\;q} \\
  &= d_\I d_\II [\Xp{}]^*_\eta - \frac{d_\I d_\II}{2}[C_2]^*_\eta.
\end{align*}
A similar analysis shows that the third term is $0$ if $\beta > 0$ and, if $\beta = 0$, we have
\[
    \frac{d_\II d_\III}{2} e^{-2\alpha}\kappa\cxwt\cwd^{\;p+\alpha}\cxwd^{\;q-1}
    = d_\II d_\III [\Xq{}]^*_\eta - \frac{d_\II d_\III}{2}[C_2]^*_\eta.
\]
The theorem follows.
\end{proof}

Note that there is an abuse of notation in the statement of the theorem: If $\alpha>0$ then $\eta$ is not a valid
grading for the element $[\Xp p]^*$, and we really mean that the term does not appear in the answer. Similarly
for $\beta>0$ and $[\Xq{}]^*$. But abusing notation in this way lets us write the result in a simpler way.

Returning to the context we set up at the beginning of this section, we suppose again that each bundle
$L_i\in \F$ has an equivariant algebraic section $s_i$, with corresponding $C_2$-invariant hypersurface $H_i$.

\begin{corollary}\label{cor:genericallyTransverse}
If the hypersurfaces $H_i$ intersect transversely, then
\[
 \Bigl[\bigcap_{i=1}^n H_i\Bigr]^*_\eta = \frac{d-A-B}{2}[C_2]^*_\eta + A[\Xp{}]^*_\eta + B[\Xq{}]^*_\eta
\]
where the pair of integers $(A,B)$ is given by
\[
(A,B)=
\begin{cases}
 \left(d_{\I}d_{\II},d_{\II}d_{\III} \right)  \quad& \text{ for }\alpha=0,\ \beta=0 \\
 \ \ \left(d_{\I}d_{\II},0 \right)  \quad& \text{ for }\alpha=0,\ \beta>0 \\
 \ \ \left(0,d_{\II}d_{\III} \right)  \quad &\text{ for }\alpha>0,\ \beta=0\\
 \ \ \ \ (0,0)   \quad& \text{ for }\alpha>0,\ \beta>0.
\end{cases}
\]
\qed
\end{corollary}

Corollary~\ref{cor:genericallyTransverse} describes the intersection of the hypersurfaces as a $C_2$-set mapping into $\Xpq pq$.
This $C_2$-set matches what we should expect: Nonequivariantly, it contains $d$ points, which is the number of points of intersection
that Bezout's theorem tells us there should be.
Now look at what happens when we take $C_2$-fixed sets of the line bundles: If $L$ is a bundle of type $\III$ or $\IV$,
then $L^{C_2}$ is the zero bundle over $\Xp p$, while if $L$ is of type $\I$ or $\II$, then $L^{C_2}$ has dimension 1 over $\Xp p$.
If $\alpha = 0$, so that there are exactly $p-1$ line bundles of type $\I$ and $\II$, Bezout's theorem then tells us that
the intersection of the fixed points of the hypersurfaces should give us exactly $d_\I d_\II$ fixed points in $\Xp p$,
which is what the corollary says. If $\alpha>0$, then there should be no points in $\Xp p$, again agreeing with the corollary.
A similar analysis shows that, when $\beta = 0$, we expect to get $d_\II d_\III$ fixed points in $\Xq q$,
and when $\beta > 0$ we expect none, as in the corollary.

Bezout's theorem also applies nonequivariantly when the hyperplanes are not necessarily transverse but are only known
to be  dimensionally transverse, provided we count things with proper multiplicities.
We propose here a set of definitions to generalize these ideas to the equivariant context and show an analogue
of Corollary~\ref{cor:genericallyTransverse} in this context.

\begin{definition}
Suppose that $M$ is a smooth $C_2$-manifold and $N\subset M$ is a smooth $C_2$-submanifold. We define the {\em $C_2$-codimension}
of $N$ in $M$ to be the representation $\nu(N,M)\in RO(\Pi_{C_2}N)$ associated with the normal bundle of $N$ in $M$.
\end{definition}

Note that the equivariant codimension captures not just the difference between the nonequivariant dimensions, but on taking fixed points
also gives the nonequivariant codimensions of the components of $N^{C_2}$ in (the components of) $M^{C_2}$.

\begin{definition}
Let $M$ be a smooth complex $C_2$ manifold and let $E\to M$ be a complex $C_2$-vector bundle over $M$.
A section $s$ of $E$ is said to be {\em dimensionally transverse} to the zero section if $s^{-1}(0)$ is a smooth submanifold
of codimension $E|s^{-1}(0)$.
\end{definition}

\begin{examples}
In these examples, we label the homogeneous coordinates on $\Xpq pq$ as $[x_1:\ldots:x_p:y_1:\ldots:y_q]$.
\begin{enumerate}
    \item The section of $L = O(2)$ defined by the polynomial $x_1^2$ is {\em not} dimensionally transverse to the zero section.
    We have $s^{-1}(0) = \Xpq{p-1}{q}$, which has the correct nonequivariant codimension. However, its fixed sets are
    \[
        \Xpq{p-1}{q}^{C_2} = \Xp{p-1} \disjunion \Xq q.
    \]
    The first component $\Xp{p-1}$ has the correct codimension of 1 in $\Xp p$, but the second component $\Xq q$ has
    codimension 0 in itself, where it would need to have codimension 1 to match the dimension of $L^{C_2}|\Xq q$.
    
    \item The section of $L = O(2)$ defined by $\sum_{i=1}^p x_i^2 + \sum_{j=1}^q y_j^2$ is dimensionally transverse to the zero section.
    If $Z(s) = s^{-1}(0)$ is the zero locus, then $Z(s)^{C_2}$ is the union of the zero locus of $\sum_{i=1}^p x_i^2$ in $\Xp p$ and the zero locus
    of $\sum_{j=1}^q y_j^2$ in $\Xq q$, both of which have the correct codimension of~1.
    
    \item Suppose $F$ is the sum of $p+q-1$ line bundles over $\Xpq pq$ with a section $s$ such that $Z(s)$ is a discrete set of points.
    Then, in order for $s$ to be dimensionally transverse, it is necessary and sufficient that: (i)   
    $Z(s)$ has no fixed points in $\Xp p$ in the case that $\alpha>0$ and (ii) it has no fixed points in $\Xq q$ in the case that $\beta > 0$.
    
\end{enumerate}
\end{examples}

Consider again a smooth complex $C_2$-manifold $M$ with a complex $C_2$-vector bundle $E\to M$.
Suppose that we have a section $s$ of $E$ that is dimensionally transverse to the zero section, with zero locus $Z = Z(s)$.
We can then construct a map 
\[
Th_Z(s): Th_Z(\nu(Z,M)) \longrightarrow Th_Z(E|Z),
\]
as follows,
where $Th_Z$ denotes the fiberwise one-point compactification over $Z$, which is the
appropriate Thom space when working with parametrized spaces.
Take a tubular neighborhood $N$ of $Z$, so $\pi\colon N\to Z$ is equivalent to the disc bundle $D(\nu(Z,M))$,
and $s$ is nonzero on $\bndry N$. 
A technical but necessary point: $E|N \iso \pi^*(E|Z)$ because $\pi$ is a homotopy equivalence,
hence, on $N$, we can replace $s$ with the composite
\[
 s'\colon N \xrightarrow{s} E|N \xrightarrow{\iso} \pi^*(E|Z)
\]
which is a map over $Z$.
Now $s'(\bndry N)$ falls outside of some
small radius disc bundle in $\pi^*(E|Z)$, hence induces a map
\[
 Th_Z(s)\colon Th_Z(\nu) \iso N/_Z\bndry N \to Th_Z(E|Z).
\]
(Here, $/_Z$ indicates fiberwise quotient over $Z$.)

\begin{definition}\label{def:multFunction}
Let $M$ be a smooth complex $C_2$ manifold and let $E\to M$ be a complex $C_2$-vector bundle over $M$.
Suppose that $s$ is a section of $E$ that is dimensionally transverse to the zero section, with zero locus $Z = Z(s)$.
Let $\eta \in RO(\Pi Z)$ denote the representation associated to the normal bundle $\nu(Z,M)$, which is assumed
equal to that associated to the bundle $E|Z$.
We define the {\em multiplicity function} $\mu(s) \in H_{C_2}^0(Z_+)$ to be the image of 1 under the map
\[
    H_{C_2}^0(Z_+) \iso H_{C_2}^\eta(Th_Z E|Z) \xrightarrow{Th_Z(s)^*} H_{C_2}^\eta(Th_Z (\nu(Z,M))) \iso H_{C_2}^0(Z_+),
\]
where the first and last maps are Thom isomorphisms, as in \cite[Theorem~3.11.3]{CostenobleWanerBook}.
\end{definition}

We call the class $\mu(s)$ the ``multiplicity function'' thinking of the map
\[
 \widetilde{\mu(s)}= \langle \mu(s), - \rangle \colon H_0^{C_2}(Z_+) \to A(C_2),
\]
defined by evaluation,
which assigns an element of the Burnside ring to each component of $Z$ and to each component of the fixed set of $Z$.
In more detail, we can identify $H^{C_2}_0(Z_+)$ with the Grothendieck group of homotopy classes of finite $C_2$-sets mapping to $Z$
with addition given by disjoint union. Thus, $\widetilde{\mu(s)}$ is a function associating an element of the
Burnside ring to each $C_2$-orbit $z\colon C_2/K\to Z$. We think of the element $\widetilde{\mu(s)}(z) \in A(C_2)$
as the multiplicity of $s$ at $z$.

Note that, if $s$ is actually transverse to the zero section, then we can use its derivative to identify $\nu(Z,M)$
and $E|Z$, the map $Th_Z(s)^*$ is then an isomorphism, and $\mu(s) = 1$.

Now let $i\colon Z\to M$ denote the inclusion. Let $\eta\in RO(\Pi M)$ denote the representation associated to $E\to M$,
so that its restriction to $RO(\Pi Z)$ is the class we have also been calling $\eta$.

\begin{definition}\label{def:fundclasswithmult}
We call the class
\[
 \{Z(s)\}^*_\eta := i_!\mu(s) = i_!(\mu(s)[Z(s)]^*_0) \in H_{C_2}^{\eta}(M_+)
\]
the {\em fundamental cohomology class of $Z(s)$ counted with multiplicities}.
\end{definition}

We use the notation $\{Z(s)\}^*_\eta$ to distinguish this class from the fundamental cohomology class $[Z(s)]^*_\eta$ of 
$Z(s)$ just considered as a submanifold of $M$. 
If $s$ is actually transverse to the zero section, we have $\{Z(s)\}^*_\eta = [Z(s)]^*_\eta$.
We also have the following characterization of the Poincar\'e dual:
\[
 \{Z(s)\}^*_\eta \cap [M] = i_!\mu(s) \cap [M] = i_*(\mu(s)\cap [Z(s)])
\]

Although we propose this as the correct way of counting multiplicities in general, we shall use it here only in the case
where $Z(s)$ is discrete. In that case, we can be much more explicit.
So now assume that $Z = Z(s)$ is a discrete $C_2$-space, that is, a finite $C_2$-set
\[
 Z = \Disjunion_k Z_k= \Disjunion_k C_2/K_k
\]
where $K_k$ is either $C_2$ or $\{e\}$ for each $k$, hence each $Z_k$ is a single embedded orbit. Then we can identify
\[
 H^{C_2}_0(Z_+) \iso \Dirsum_k A(K_k).
\]
This is a module over $A(C_2)$, with the action on a summand of the form $A(C_2)$ being multiplication and
the action on $A(e) = \Z$ having $[C_2]$ acting as $2$.
The cohomology is
\[
 H_{C_2}^0(Z_+) \iso \Hom_{A(C_2)}(H^{C_2}_0(Z_+),A(C_2)),
\]
the isomorphism being the adjoint of the evaluation map.
Note that an $A(C_2)$-homomorphism $A(e) \to A(C_2)$ is determined by the image of $1$, which must be of the form $n[C_2]$ for some $n\in\Z$.
In this situation, the multiplicity function $\mu(s)$ is completely determined by its value on each orbit $Z_k$.
Returning to the main context of this section, we can describe the function as follows.

\begin{proposition}\label{prop:multiplicityCalc}
Let $F$ be a $(p+q-1)$-dimensional complex vector bundle over $\Xpq pq$. Assume that we have an
equivariant section $s$ that is dimensionally transverse to the zero section, so its zero locus $Z(s) = \Disjunion_k Z_k$ is a finite $C_2$-set.
Take a neighborhood $N_k$ of each orbit $Z_k$ consisting of a ball small enough that it does not contain any other zeros.
Then, for each $k$, the section $s$ determines a map of spheres $\bar s_k\colon N_k/\bndry N_k\to N_k/\bndry N_k$, which represents an element
$[\bar s_k] \in A(C_2)$, and $\widetilde{\mu(s)}([Z_k]) = [\bar s_k]$.
%OldSean: should there be parentheses around the [Z_k] as we have a function being evaluated on the class? Or maybe I am interpreting the symbol incorrectly and it is multiplication?
%Steve: Yes, it's a function. I don't always insist on parentheses for function application, but I went ahead and added them here for clarity.
%Sean: Thanks. I wasn't sure if there was a reason to not do it, like maybe I was misunderstanding something. I think this also will help the reader distinguish between \mu and \widetilde{\mu}.
\end{proposition}

\begin{proof}
The maps $\bar s_k$ are the restrictions of the map $Th_Z(s)$ used in Definition~\ref{def:multFunction}.
The proposition then follows from that definition applied to the case where $Z(s)$ is discrete.
In the case where $Z_k$ is a free orbit, note that the $C_2$-set we get is also free.
\end{proof}

We could also use Proposition~\ref{prop:transversality}(2) to realize each $\widetilde{\mu(s)}[Z_k]$ as a virtual $C_2$-set mapping to $Z(s)$,
by deforming $s$ to a section $s'$ actually transverse to the zero section and equal to $s$ outside the union of the balls $N_k$.
The zeros of $s'$ in $N_k$, counted with signs, also then give $\widetilde{\mu(s)}[Z_k]$.

\begin{proposition}
Let $F$ be a sum of $p+q-1$ line bundles $L_i$ over $\Xpq pq$, each of which has an algebraic section $s_i$ defining a
hypersurface $H_i$. Assume further that the $H_i$ intersect dimensionally transversely, meaning here that
the sum of sections $s = \Dirsum s_i$ is dimensionally transverse to the zero section of $F$.
As before, write $Z(s) = \bigcap_i H_i = \Disjunion_k Z_k$ where each $Z_k$ is an orbit.
\begin{enumerate}
    \item If $Z_k$ is a fixed orbit, then $\widetilde{\mu(s)}[Z_k]$ is determined as follows. Its nonequivariant restriction $\rho(\widetilde{\mu(s)}[Z_k])\in\Z$
    is the nonequivariant intersection number of the $H_i$ at $Z_k$, while the number of fixed points $(\widetilde{\mu(s)}[Z_k])^{C_2}\in\Z$ is the 
    intersection number in $\Xpq pq^{C_2}$ of those $H_i$ for which $(L_i^{C_2})_{Z_k} \neq 0$.
    
    \item If $Z_k$ is a free orbit, then $\widetilde{\mu(s)}[Z_k] = n[C_2]$ where $n$ is the intersection number of the $H_i$ at
    either of the two points in the orbit.
\end{enumerate}
In the first case, the total number of points and the number of fixed points will both be nonnegative,
and there will be at least as many points total as there are fixed points.
In the second case, the number $n$ will be nonnegative. In either case this means that the element $\widetilde{\mu(s)}[Z_k] \in A(C_2)$
is represented by an actual $C_2$-set.
\end{proposition}

\begin{proof}
The descriptions of the total number of points and the number of fixed points follows from the previous proposition
together with the equality of the topological and algebraic methods of calculating multiplicities, as shown, for example,
in \cite[\S5.2]{GriffithsHarris}.
For the last statements, the intersection numbers are nonnegative, so what we need to verify is that, in case (1),
the total number of points is no less than the number of fixed points.
Assume that $Z_k\in \Xp p$ and, without loss of generality, that the line bundles are ordered so that
$L_1^{C_2}$ through $L_{p-1}^{C_2}$ have nonzero fibers at $Z_k$ while $L_p^{C_2}$ through $L_{p+q-1}^{C_2}$ have zero fibers at $Z_k$.
If the sections $s_i$ are defined by polynomials $f_i$, then, if $\Oscr:=\Oscr_{\Xpq pq,\,Z_k}$ is the local ring at $Z_k$ in $\Xpq pq$ and $\Oscr^{C_2}:=\Oscr_{\Xp{p},\,Z_k}$
is the local ring at $Z_k$ in $\Xp p$, the total number of points in
$\mu(s)[Z_k]$ is given by the height of $\Oscr/(f_1,\ldots,f_{p+q-1})$ while the number of fixed points
is given by the height of $\Oscr^{C_2}/(f_1,\ldots,f_{p-1})$.
However, the latter is equal to the height of $\Oscr/(f_1,\ldots,f_{p-1},y_1,\ldots,y_q)$ where, as earlier, we write the
homogeneous coordinates of $\Xpq pq$ as $[x_1:\ldots:x_p:y_1:\ldots:y_q]$. But the height
of $\Oscr/(f_1,\ldots,f_{p-1},y_1,\ldots,y_q)$ is less than or equal to that of $\Oscr/(f_1,\ldots,f_p,f_{p+1},\ldots,f_{p+q-1})$,
so the claim follows.
Finally, an element $a \in A(C_2)$ is represented by an actual $C_2$-set if and only if $0\leq a^{C_2}\leq \rho(a)$.
\end{proof}

The import of this result is that, under the assumptions of the proposition, the class $\{Z(s)\}^*_\eta\in H_{C_2}^\eta(\Xpq pq_+)$
is the push-forward of an actual $C_2$-set mapping to $M$.
We form a $C_2$-set $\mu(s)Z(s)$ by replacing each orbit $Z_k$ in $Z$ with the actual $C_2$-set representing $\widetilde{\mu(s)}[Z_k]$.
We then have a $C_2$-map
\[
 j\colon \mu(s)Z(s) \to Z(s) \includesin M,
\]
and $\{Z(s)\}^*_\nu = j_![\mu(s)Z(s)]^*_0)$.

Finally, we can state the following generalization of Corollary~\ref{cor:genericallyTransverse}.

\begin{theorem} \label{Theorem BezoutMult}
With the assumptions of Corollary~\ref{cor:genericallyTransverse}, but assuming now only that the hypersurfaces $H_i$
intersect dimensionally transversely, \textit{i.e.} the associated section is dimensionally transverse to the zero section, we have that
\[
 \Bigl\{\bigcap_{i=1}^n H_i\Bigr\}^*_\eta = \frac{d-A-B}{2}[C_2]^*_\eta + A[\Xp{}]^*_\eta + B[\Xq{}]^*_\eta
\]
where the pair of integers $(A,B)$ is given by
\[
(A,B)=
\begin{cases}
 \left(d_{\I}d_{\II},d_{\II}d_{\III} \right)  \quad& \text{ for }\alpha=0,\ \beta=0 \\
 \ \ \left(d_{\I}d_{\II},0 \right)  \quad& \text{ for }\alpha=0,\ \beta>0 \\
 \ \ \left(0,d_{\II}d_{\III} \right)  \quad &\text{ for }\alpha>0,\ \beta=0\\
 \ \ \ \ (0,0)   \quad& \text{ for }\alpha>0,\ \beta>0.
\end{cases}
\]
\end{theorem}

\begin{proof}
This follows from Theorem~\ref{thm:genericallyTransverse} on approximating $s$ by a section that is actually transverse
as in Proposition~\ref{prop:transversality}(2)
and appealing to Proposition~\ref{prop:multiplicityCalc} to identify the result
with the fundamental cohomology class counted with multiplicities.
\end{proof}

\section*{Appendix A: Glossary of notations} \label{App: A}

We provide here a brief glossary of notations that may be unfamiliar.

\begin{description}\setlength{\itemsep}{7pt}
    \item [$\R^{\sigma}$] The nontrivial irreducible real representation of $C_2$. The corresponding element of the representation ring $RO(C_2)$ is usually written simply as $\sigma$.
    %\bm{\text{R}^\sigma}$
    
    \item [$\C^{\sigma}$] The nontrivial irreducible complex representation of $C_2$. As an element of the real representation ring $RO(C_2)$, this is $2\sigma$.
    %[$\bm{\text{C}^\sigma}$]

    \item [$A(G)$] The Burnside ring of the finite group $G$, the Grothendieck group of finite $G$-sets under disjoint union. See Section~\ref{subsec:cohompoint}.
    
    \item [$\kappa \in A(C_2)$] The element $\kappa = 2 - [C_2]$ generates the kernel of the dimension map $A(C_2)\to \Z$, and $A(C_2) \iso \Z[\kappa]/\langle \kappa^2 - 2\kappa \rangle$. See Section~\ref{subsec:cohompoint}.
    
    \item [$\Mackey A$] The Burnside ring Mackey functor, the universal coefficient system for $RO(C_2)$-graded cohomology, and the coefficient system used throughout the paper unless otherwise specified. See Section~\ref{subsec:cohompoint}.
    
    \item [$\Mackey H_{C_2}^{RO(C_2)}(X) = \Mackey H_{C_2}^{RO(C_2)}(X;\Mackey A)$] The $RO(C_2)$-graded Mackey functor-valued cohomology of the based $C_2$-space $X$ with coefficients in $\Mackey A$. See Section~\ref{subsec:cohompoint}.
    
    \item [$H\Mackey A$] The Eilenberg-Mac\,Lane $C_2$-spectrum representing $\Mackey H_{C_2}^{RO(C_2)}(X;\Mackey A)$. See Section~\ref{subsec:cohompoint}.

    \item [$\Mackey \Z$, $\Mackey \Z_-$, $\Mackey \Z'$, $\Mackey \Z'_-$, $\conc{\Z}$,  $\conc{\Z/2}$] Particular Mackey functors that appear in the $C_2$-cohomology of a point. See Section~\ref{subsec:cohompoint}.
    
    \item [$\iota$, $\xi$, $ e$, %$\bm{\iota^{m}}$, 
    $e^{-m}\kappa$,  $e^{-m}\tau(\iota^{-(2k+1)})$] Generators of the $C_2$-cohomology of a point. See Section~\ref{subsec:cohompoint}. See also Remark~\ref{rem:othernotations} for the relationship with other notations used in the literature for some of these elements.
    
    \item [$\Pi_{C_2}X = \Pi X$] The equivariant fundamental groupoid of the $C_2$-space $X$. See Section~\ref{sub ROPi}.
    
    \item [$RO(\Pi X)$] The real representation ring of the fundamental groupoid of the $C_2$-space $X$. See Section~\ref{sub ROPi}.

    \item [$\Mackey H_{C_2}^{RO(\Pi B)}(X) = \Mackey H_{C_2}^{RO(\Pi B)}(X;\Mackey A)$] The $RO(\Pi B)$-graded Mackey functor-valued cohomology of the $C_2$-ex-space $X$ over $B$. See Section~\ref{sub ROPi}.
    
    \item [$H\Mackey A^\gamma$] The $C_2$-spectrum parametrized by $B$ representing $\Mackey H_{C_2}^{\gamma+RO(C_2)}(X)$ for $C_2$-ex-spaces $X$ over $B$ and $\gamma\in RO(\Pi B)$. See Section~\ref{sub ROPi}.
    
%    \item [$\bm{\Omega_0}$, $\bm{\Omega_1}$] Elements of $RO(\Pi \Xpq\infty\infty)$, such that $RO(\Pi \Xpq\infty\infty)\iso\Z\{1,\sigma,\Omega_0,\Omega_1\}/\langle \Omega_0+\Omega_1 = 2\sigma-2 \rangle$. {\color{red} (can this be removed from the glossary as we don't use it much.)}

    \item [$\rho^*$] The restriction map from $C_2$-cohomology to nonequivariant cohomology. See Section~\ref{subsec:restrictions}.
    
    \item [$(-)^{C_2}$] The fixed-point map from $C_2$-cohomology of $X$ to the nonequivariant cohomology of $X^{C_2}$. See Section~\ref{subsec:restrictions}.
    
    \item [$\Phi^{C_2}$] The geometric fixed-point functor on $C_2$-spectra. It represents the fixed-point map $(-)^{C_2}$. See Section~\ref{subsec:restrictions}.
    
%    \item [$\bm{EC_2}$] The universal free $C_2$-space, a free $C_2$-CW complex that is nonequivariantly contractible.
%            $\tE C_2$ is the cofiber of the map $(EC_2)_+\to S^0$.{\color{red} (can this be removed from the glossary as we don't use it much.)}
            
    \item [$\Xpq{p}{q}$] The $C_2$-equivariant projective space of complex lines in $\C^p\dirsum(\C^\sigma)^q$, where $0\leq p\leq\infty$ and $0\leq q\leq\infty$.
    
    \item [$B$] Used in most of the paper as shorthand for $\Xpq\infty\infty$.
    
     \item [$\chi$] The $C_2$-involution of $\Xpq{\infty}{\infty}$ that classifies the operation of tensoring a complex line bundle with $\C^\sigma$.
    
    \item [$\omega$] The tautological bundle over $\Xpq pq$. Also, the corresponding representation in $RO(\Pi \Xpq pq)$.
    
    \item [$\chi\omega$] The bundle $\omega\tensor_\C \C^\sigma$ over $\Xpq pq$. Also, the corresponding representation in $RO(\Pi \Xpq pq)$.
    
    \item [$\cw$, $\cxw$] The Euler classes of the bundles $\omega$ and $\chiw$, respectively.
    
    \item [$\cwd$, $\cxwd$] The Euler classes of the dual bundles to $\omega$ and $\chiw$, respectively.
    
    \item [$\cwt$, $\cxwt$] Elements in $\Mackey H_{C_2}^{RO(\Pi B)}(B_+)$, $B = \Xpq\infty\infty$, needed as algebraic generators along with $\cw$ and $\cxw$.

    \item [$f_!$] The push-forward map associated to a map between $C_2$-manifolds. See Definition~\ref{def:pushforward}.
    
   \item [${[M]_\eta^*}$] The fundamental cohomology class of $M$ in $N$. See Definition~\ref{def:fundcohomologyclass}.
    
    \item [$\{Z(s)\}^*_\eta$] The fundamental cohomology class of $Z(s)$ counted with multiplicities, where $Z(s)$ is the zero locus of a dimensionally transverse section of a complex vector bundle over a smooth manifold. See Definition~\ref{def:fundclasswithmult}.

\end{description}

\section*{Appendix B: Table of restrictions} \label{App: B}

Because our calculations depend on knowing the restrictions to nonequivariant cohomology and the fixed points of various
elements, we collect those values here. The first table involves elements of $\Mackey H_{C_2}^{RO(C_2)}(S^0)$
while the second has elements of $\Mackey H_{C_2}^{RO(\Pi B)}(\Xpq pq_+)$.

\setlength{\tabcolsep}{1em}
\renewcommand{\arraystretch}{2}
 
\[
\begin{tabular}{|c||c|c|c|c|c|c|}
\hline
$x$ & $\kappa$ & $\xi$ & $e$ & $e^{-m}\kappa$ & $e^{-m}\tau(\iota^{-(2k+1)})$ & $1-\kappa$  \\
\hline
$\rho^*(x)$ & $0$ & $\iota^2$ & $0$ & $0$ & $0$ & $1$\\
\hline
$x^{C_2}$ & $2$ & $0$ & $1$ & $2$ & $0$ & $-1$\\
\hline
\end{tabular}
\]

\[
\begin{tabular}{|c||c|c|c|c|c|c|}
\hline
$x$ & $\cxwt$ & $\cwt$ & $\cwd$ & $\cxwd$ & $1-\epsilon$ \\
\hline
$\rho^*(x)$ & $\iota^2\zeta^{-1}$ & $\zeta$ & $\zeta c$ & $\iota^2\zeta^{-1}c$ & $1$ \\
\hline
$x^{C_2}$ & $(0,1)$ & $(1,0)$ & $(c,1)$ & $(1,c)$ & $(1,-1)$ \\
\hline
\end{tabular}
\]

%%%%%%%%%%%%%%%%%%%%%%%%%%%%%%%%%%%%%%%%%%%%%%%%%%%%%%%%%%%%%%%%%%%%%%%%%%%%%%%%%%%%%%%%%%%%%%%%%%%%%%%%%%%%%%%%%%%%%%%%%%%%%
%%%%%%%%%%%%%%%%%%%%%%%%%%%%%%%%%%%%%%%%%%%%%%%%%%%%%%%%%%%%%%%%%%%%%%%%%%%%%%%%%%%%%%%%%%%%%%%%%%%%%%%%%%%%%%%%%%%%%%%%%%%%%
%%%%%%%%%%%%%%%%%%%%%%%%%%%%%%%%%%%%%%%%%%%%%%%%%%%%%%%%%%%%%%%%%%%%%%%%%%%%%%%%%%%%%%%%%%%%%%%%%%%%%%%%%%%%%%%%%%%%%%%%%%%%%
%%%%%%%%%%%%%%%%%%%%%%%%%%%%%%%%%%%%%%%%%%%%%%%%%%%%%%%%%%%%%%%%%%%%%%%%%%%%%%%%%%%%%%%%%%%%%%%%%%%%%%%%%%%%%%%%%%%%%%%%%%%%%
%%%%%%%%%%%%%%%%%%%%%%%%%%%%%%%%%%%%%%%%%%%%%%%%%%%%%%%%%%%%%%%%%%%%%%%%%%%%%%%%%%%%%%%%%%%%%%%%%%%%%%%%%%%%%%%%%%%%%%%%%%%%%
%%%%%%%%%%%%%%%%%%%%%%%%%%%%%%%%%%%%%%%%%%%%%%%%%%%%%%%%%%%%%%%%%%%%%%%%%%%%%%%%%%%%%%%%%%%%%%%%%%%%%%%%%%%%%%%%%%%%%%%%%%%%%
%%%%%%%%%%%%%%%%%%%%%%%%%%%%%%%%%%%%%%%%%%%%%%%%%%%%%%%%%%%%%%%%%%%%%%%%%%%%%%%%%%%%%%%%%%%%%%%%%%%%%%%%%%%%%%%%%%%%%%%%%%%%%
%%%%%%%%%%%%%%%%%%%%%%%%%%%%%%%%%%%%%%%%%%%%%%%%%%%%%%%%%%%%%%%%%%%%%%%%%%%%%%%%%%%%%%%%%%%%%%%%%%%%%%%%%%%%%%%%%%%%%%%%%%%%%
%%%%%%%%%%%%%%%%%%%%%%%%%%%%%%%%%%%%%%%%%%%%%%%%%%%%%%%%%%%%%%%%%%%%%%%%%%%%%%%%%%%%%%%%%%%%%%%%%%%%%%%%%%%%%%%%%%%%%%%%%%%%%

\bibliography{Bibliography}{}
\bibliographystyle{amsplain} 
\end{document}